\documentclass[11pt]{amsart}

\usepackage{amsfonts, amsmath, amssymb, color}
\usepackage{amsthm}
\usepackage{bbm}
\usepackage{booktabs}
\usepackage{float}
\usepackage{hhline}
\usepackage{lipsum}
\usepackage{lscape}
\usepackage{subfig}
\usepackage{textcomp}
\usepackage{tikz}
\usetikzlibrary{decorations.pathmorphing,shapes,arrows,positioning}
\usepackage{xcolor}
\usepackage{young}
\usepackage{youngtab}
\usepackage{ytableau}
\usepackage{tikz}
\usepackage{float}
\usepackage{scalefnt}
\usepackage{hyperref}
\usetikzlibrary{calc}

\hypersetup{colorlinks,linkcolor=blue,urlcolor=cyan,citecolor=blue}
\allowdisplaybreaks[4]
\frenchspacing

\theoremstyle{plain}
\pagestyle{plain} \oddsidemargin 0pt \evensidemargin 0pt
\textheight=225 mm \textwidth=170 mm

\voffset=-1.00 true cm
\newtheorem{thm}{Theorem}[section]
\newtheorem{lem}[thm]{Lemma}
\newtheorem{prop}[thm]{Proposition}
\newtheorem{cor}[thm]{Corollary}
\newtheorem{rem}[thm]{Remark}
\theoremstyle{definition}

\newtheorem{exmp}[thm]{Example}

\makeatletter
\newcommand{\rmnum}[1]{\romannumeral #1}
\newcommand{\Rmnum}[1]{\expandafter\@slowromancap\romannumeral #1@}
\makeatother
\newcommand{\la}{\lambda}

\numberwithin{equation}{section} \errorcontextlines=0
\begin{document}
\title{A spin analog of the plethystic Murnaghan-Nakayama rule}
\author{Yue Cao}
\address{School of Mathematics, South China University of Technology, Guangzhou, Guangdong 510640, China}
\email{434406296@qq.com}
\author{Naihuan Jing}
\address{Department of Mathematics, North Carolina State University, Raleigh, NC 27695, USA}
\email{jing@ncsu.edu}
\author{Ning Liu}
\address{School of Mathematics, South China University of Technology, Guangzhou, Guangdong 510640, China}
\email{mathliu123@outlook.com}
\subjclass[2010]{Primary: 05E10, 05E05; Secondary: 17B69}\keywords{plethysm, Murnaghan-Nakayama rule, Schur $Q$-functions, Hall-Littlewood functions, vertex operators}

\maketitle
\begin{abstract} As a spin analog of the plethystic Murnaghan-Nakayama rule for Schur functions,
   the plethystic Murnaghan-Nakayama rule for Schur $Q$-functions is established with the help of the vertex operator realization. This generalizes both
   the Murnaghan-Nakayama rule and the Pieri rule for Schur $Q$-functions. A plethystic Murnaghan-Nakayama rule for Hall-Littlewood functions is also investigated.
\end{abstract}
\tableofcontents

\section{Introduction}
Schur functions form an important and well studied basis in the algebra $\Lambda$ of symmetric functions, and one application is to
compute the complex irreducible characters of the symmetric group via Frobenius' formula. The subalgebra of $\Lambda$ generated by the odd-degree power sums
 also has a distinguished basis of Schur's $Q$-functions. In the classic work \cite{S}, Schur showed
that the nontrivial irreducible complex projective (or spin) characters of the symmetric group
are given by the Schur Q-functions. Like Schur functions, the Schur Q-functions can also be defined combinatorially in terms of shifted tableaux
\cite{St}, and they can also be realized by (twisted) vertex operators \cite{J1}.

Plethysm of symmetric functions was introduced by Littlewood (see \cite{L1}) as the composition of representations of general linear groups (cf. \cite{M}).
 The plethystic Murnaghan-Nakayama rule for Schur functions was introduced in \cite{DLT} using Muir's rule. Let $p_{n}$, $h_{k}$ and $s_{\mu}$ denote the $n$-th power sum symmetric function, the complete symmetric function and the Schur function respectively. Then the combinatorial rule says that
\begin{align}\label{e:c}
(p_n\circ h_k)s_{\mu}=\sum_{\lambda}(-1)^{spin(\lambda/\mu)}s_{\lambda}
\end{align}
summed over all partitions $\lambda$ such that $\lambda/\mu$ is a horizontal $n$-border strip of weight $k$
and $spin(\lambda/\mu)$ counts the total number of rows minus the number of components (cf. \cite[p.3]{CJL} for the explicit definitions).
When $k=1$, it reduces
to the usual Murnaghan-Nakayama rule, 
and when $n=1$ it specializes to the Pieri rule. 
An alternative proof of \eqref{e:c} was provided in \cite{EPW} using the character theory of the symmetric group
and a direct combinatorial proof was given by Wildon \cite{W}. Using the vertex operator realization of the Schur functions, we
have recently obtained a determinant-type plethystic Murnaghan-Nakayama rule \cite{CJL}, which includes \eqref{e:c} as a consequence.

The purpose of this paper is to establish a spin analog of the plethystic Murnaghan-Nakayama rule for Schur $Q$-functions.
This is achieved by formulating the coefficients of the adjoint action of
 $p_s\circ Q_{(k)}$ on the Schur Q-function $Q_{\lambda}$ associated to strict partition $\lambda$ via Pfaffians.
With help of the Pfaffian-type plethystic Murnaghan-Nakayama rule for Schur Q-functions, we obtain the plethystic Murnaghan-Nakayama rule for Schur $Q$-functions:
\begin{align}\label{e:combin.}
(p_s\circ Q_{(k)})Q_{\mu}=\sum\limits_{\lambda}sgn(\sigma)2^{A(\lambda/\mu)}2^{l(\mu)-l(\lambda)}Q_{\lambda}
\end{align}
where the sum runs over all strict partitions $\lambda\supset\mu$ such that $\lambda/\mu$ is a symmetric horizontal $(s,k)$-strip (see Corollary \ref{t:combin.}). Here the length $l(\la)$ of partition $\la$ is defined in section 2, the symmetric horizontal $(s,k)$-strip, the nonnegative integer $A(\lambda/\mu)$ and the permutation $\sigma$ associated to $(\lambda, \mu)$ are explained in details in section 3.

The layout of the paper goes as follows. In Section 2, we review some basic concepts and preliminaries about symmetric functions. In Section 3, we begin by recalling the vertex operator realization of the Schur $Q$-functions. We then introduce the vertex operator $T^{(s)}_k$ to realize the plethystic action of $p_s\circ Q_{(k)}$ on symmetric functions.
Using the Wick formula, we compute the adjoint action $T^{(s)-}_{k}Q_{\lambda}$ via Pfaffians (see Theorem \ref{t:Q}). Based on these computations, the plethystic Murnaghan-Nakayama rule is established. Section 4 is devoted to a plethystic Murnaghan-Nakayama rule for Hall-Littlewood functions
as generalization of previous sections.

\section{\textbf{Preliminaries about symmetric functions}}
In this section, we recall some notations and definitions following \cite[Ch. I, \S 1-2; Ch. III]{Mac}. A composition $\lambda=\left(\lambda_{1}, \ldots, \lambda_{l}\right)$ is a list of nonnegative integers, and denoted as $\lambda\models|\lambda|=\sum_{i=1}^{l}\lambda_{i}$.  A composition $\lambda$ becomes a partition of $|\lambda|$ if the parts $\lambda_i$ are ordered: $\lambda_1\geq \lambda_2\geq \ldots \geq \lambda_l$, denoted as $\lambda \vdash|\lambda|$. The number $l(\lambda)$ of positive parts of $\lambda$ is called the length of $\lambda$.
Sometime we write $\lambda=\left(\cdots, 2^{m_{2}(\lambda)}, 1^{m_{1}(\lambda)}\right)$, where $m_{i}(\lambda)$ is the multiplicity of
part $i$ in $\lambda$.
For $\lambda\vdash n$ denote $z_{\lambda}=\prod_{i\geq  1}i^{m_{i}(\lambda)}m_{i}(\lambda)!$ and
\begin{align*}
z_{\lambda}(t)=\frac{z_{\lambda}}{\prod_{i\geq 1}(1-t^{i})^{m_{i}(\lambda)}}.
\end{align*}
An odd (resp. strict) partition is a partition with odd (resp. distinct) parts. Let $\mathcal{OP}$ (resp. $\mathcal{SP}$) be the set of all odd (resp. strict) partitions, and let $\mathcal{OP}_{n} =\{\lambda\in \mathcal{OP}: |\lambda|=n\}$ and $\mathcal{SP}_n=\{\lambda\in\mathcal{SP}: |\lambda|=n\}$. Sometimes we write $\lambda \vdash_{o} n$ (resp. $\lambda\vdash_s n$) to mean $\lambda\in \mathcal{OP}_n$ (resp. $\lambda\in\mathcal{SP}_n$).

A partition is identified with its {\it Young diagram}, which is the left-justified array of squares consisting of $\la_i$ squares in row $i$ with $i$ increasing downward. For instance,
\begin{gather*}
  \centering
\begin{tikzpicture}[scale=0.6]
   \coordinate (Origin)   at (0,0);
    \coordinate (XAxisMin) at (0,0);
    \coordinate (XAxisMax) at (4,0);
    \coordinate (YAxisMin) at (0,-3);
    \coordinate (YAxisMax) at (0,0);
\draw [thin, black] (0,0) -- (4,0);
    \draw [thin, black] (0,-1) -- (4,-1);
    \draw [thin, black] (0,-2) -- (3,-2);
    \draw [thin, black] (0,-3) -- (3,-3);
    \draw [thin, black] (0,-4) -- (1,-4);
    \draw [thin, black] (0,0) -- (0,-4);
    \draw [thin, black] (1,0) -- (1,-4);
    \draw [thin, black] (2,0) -- (2,-3);
    \draw [thin, black] (3,0) -- (3,-3);
    \draw [thin, black] (4,0) -- (4,-1);
    \end{tikzpicture}
\end{gather*}
is the Young diagram of $(4,3,3,1)$. We say $\mu\subset\la$ to mean $\mu_i\leq\la_i$ for any $i$. If $\mu\subset\la$, the set-theoretic difference $\theta=\la-\mu$ is called a {skew diagram}. For example, if $\la=(4,3,3,1)$ and $\mu=(3,2,1)$, the skew diagram $\theta=\la-\mu$ is the shaded region in the picture below.
\begin{gather*}
  \centering
\begin{tikzpicture}[scale=0.6]
   \coordinate (Origin)   at (0,0);
    \coordinate (XAxisMin) at (0,0);
    \coordinate (XAxisMax) at (4,0);
    \coordinate (YAxisMin) at (0,-3);
    \coordinate (YAxisMax) at (0,0);
\draw [thin, black] (0,0) -- (4,0);
    \draw [thin, black] (0,-1) -- (4,-1);
    \draw [thin, black] (0,-2) -- (3,-2);
    \draw [thin, black] (0,-3) -- (3,-3);
    \draw [thin, black] (0,-4) -- (1,-4);
    \draw [thin, black] (0,0) -- (0,-4);
    \draw [thin, black] (1,0) -- (1,-4);
    \draw [thin, black] (2,0) -- (2,-3);
    \draw [thin, black] (3,0) -- (3,-3);
    \draw [thin, black] (4,0) -- (4,-1);
    \filldraw[fill = gray][ultra thick]
    (3,0) rectangle (4,-1) (2,-1)rectangle(3,-2) (2,-2)rectangle(3,-3) (1,-2)rectangle(2,-3) (0,-3)rectangle(1,-4);
    \end{tikzpicture}
\end{gather*}
We define the conjugate partition $\la^{'}$ by reflecting the diagram of $\la$ along the diagonal, so that the conjugate of $(4,3,3,1)$ above is still $(4,3,3,1)$.

Let $\Lambda_{\mathbb{Q}}$ be the ring of symmetric functions in infinitely many variables
$x_{1},x_{2},\ldots $ over $\mathbb{Q}$, and let $\Lambda_{\mathbb{Q}(t)}=\Lambda_{\mathbb{Q}}\otimes \mathbb{Q}(t)$. Let  $p_{n}=\sum x^n_{i}$ be the degree $n$ power sum symmetric function. Then $p_{\lambda}=p_{\lambda_{1}}p_{\lambda_{2}}\cdots p_{\lambda_{l}}$ for $\lambda=\left(\lambda_{1}, \ldots, \lambda_{l}\right)\in\mathcal P$ form a $\mathbb{Q}(t)$-basis of $\Lambda_{\mathbb{Q}(t)}$. The space $\Lambda_{\mathbb{Q}(t)}$ is equipped with the Hall-Littlewood bilinear form $\langle\cdot,\cdot\rangle_{t}$ given by \cite[p.225]{Mac}.
\begin{align} \label{e:pp}
\langle p_{\lambda},p_{\mu}\rangle_t=\delta_{\lambda\mu}z_{\lambda}(t),
\end{align}
under which the Hall-Littlewood functions $Q_{\lambda}(t)$ form an orthogonal basis \cite[Ch.III, (2.11),(4.9)]{Mac}:
\begin{align}
\langle Q_{\lambda}(t), Q_{\mu}(t)\rangle_{t}=\delta_{\lambda\mu}b_{\lambda}(t),
\end{align}
where $b_{\lambda}(t)=(1-t)^{l(\lambda)}\prod_{i\geqslant 1}[m_i(\lambda)]!$, $[n]=\frac{1-t^n}{1-t}$, and $[n]!=[n][n-1]\cdots[1]$.

Define the $\mathbb{Q}(t)$-linear and anti-involutive automorphism $\ast$ by
$\langle fg,h\rangle_t=\langle g,f^{\ast}h\rangle_t$
for any $f,g,h\in \Lambda_{\mathbb{Q}(t)}$. It is clear from \eqref{e:pp} that the adjoint $p^*_m$ is the differential operator $p^{\ast}_{m}=\frac{m}{1-t^{m}}\frac{\partial}{\partial p_{m}}$ for $m\in\mathbb N$.

Let $\Gamma_{\mathbb{Q}}$ be the subring of $\Lambda_{\mathbb{Q}}$ generated by $p_{1},p_{3},p_5,\ldots $. It is well-known that the space $\Gamma_{\mathbb{Q}}$ is spanned by the Schur $Q$-functions $Q_{\lambda}$ ($\lambda$ strict) \cite[p.253]{Mac}.
Similar to \eqref{e:pp}, $\Gamma_{\mathbb{Q}}$ has the following inner product \cite[p.255]{Mac}
\begin{align}\label{e:Qinner}
\langle p_{\lambda}, p_{\mu}\rangle_{-1}=\delta_{\lambda \mu}\frac{z_{\lambda}}{2^{l(\lambda)}},\quad \lambda, \mu\in \mathcal{OP}.
\end{align}
Under which the $Q_{\lambda}$ ($\lambda$ strict) forms an orthogonal basis such that \cite[p.255]{Mac}
\begin{align}
\langle Q_{\lambda}, Q_{\mu}\rangle_{-1}=2^{l(\lambda)} \delta_{\lambda \mu}, \quad \lambda, \mu\in \mathcal{SP}.
\end{align}

For odd integer $m$, the multiplication operator $p_{m}$ and its adjoint operator with respect to \eqref{e:Qinner}
is given by $p_{m}^{-}=\frac{m}{2} \frac{\partial}{\partial p_{m}}$ for positive odd integer $m$. So $p_m^-$ can be viewed
as the specialization of $p_m^*$ at $t=-1$.

An important further operation on symmetric functions is the plethysm \cite[Ch. X.II]{L1}\cite[Ch.I, \S 8]{Mac}.
 For $f\in\Lambda_{\mathbb Q}$, the plethysm $g \mapsto g \circ f$ on any  $g=\sum_{\lambda} c_{\lambda} p_{\lambda}\in\Lambda_{\mathbb Q}$ means
\begin{align}
g\circ f=\sum_{\lambda} c_{\lambda} \prod_{i=1}^{l(\lambda)} f\left(x_{1}^{\lambda_{i}}, x_{2}^{\lambda_{i}}, \ldots\right).
\end{align}
Thus the plethysm by $f$ is an algebra isomorphism of $\Lambda$ sending $p_{k} \mapsto$ $f\left(x_{1}^{k}, x_{2}^{k}, \ldots\right)$.

Following \cite[Sec.6]{Bak}, we define a $t$-analog of plethysm in $\Lambda_{\mathbb{Q}(t)}$ as follows. For $f \in \Lambda_{\mathbb{Q}(t)}$,
the plethysm $g \mapsto g\diamond f$ on any $g=\sum_{\lambda} c_{\lambda}(t) p_{\lambda}\in \Lambda_{\mathbb{Q}(t)}$ means
\begin{align}
g\diamond f=\sum_{\lambda} c_{\lambda}(t) \prod_{i=1}^{l(\lambda)} f\left(x_{1}^{\lambda_{i}}, x_{2}^{\lambda_{i}}, \ldots; t^{\lambda_i}\right).
\end{align}
Thus the plethysm by $f$ is the algebra isomorphism of $\Lambda_{\mathbb{Q}(t)}$ sending $p_{n} \mapsto$ $f\left(x_{1}^{n}, x_{2}^{n}, \ldots ; t^{n}\right)$. For instance, the plethysm by $(1+t) p_{1}$ is given by sending $p_{n}\mapsto (1+t^{n}) p_{n}$.

Let $f, g, h$ be symmetric functions. We have the following properties for plethysm \cite[Ch.I, \S 8]{Mac}:
\begin{align}
(af+bg)\circ h&=a(f\circ h)+b(g\circ h),\quad a,b\in\mathbb{Q},\\
(fg)\circ h&=(f\circ h)(g\circ h),\\
f\circ p_k&=p_k \circ f.
\end{align}

\section{\textbf{A plethystic Murnaghan-Nakayama rule for Schur's $Q$-functions}}
\subsection{\bf{Vertex operator realization of the Schur $Q$-functions}}

Recall the vertex operator realization of the Schur $Q$-functions \cite[Thm. 5.9]{J1}. Define the vertex operator $Q(z)$ and its adjoint operator $Q^{-}(z)$ (cf. the vertex operators for Hall-Littlewood functions at $t = -1$ in \cite[Eqs. 2.5, 2.6]{JL2}) by
\begin{align}
Q(z)&=\exp \left(\sum_{m\geq1,\text{odd}}\frac{2}{m} p_{m} z^{m}\right) \exp \left(-\sum_{m\geq1,\text{odd}} \frac{\partial}{\partial p_{m}} z^{-m}\right)=\sum_{m \in \mathbb{Z}} Q_{m} z^{m},\\
Q^{-}(z)&=\exp \left(-\sum_{m\geq1,\text{odd}}\frac{2}{m} p_{m} z^{m}\right) \exp \left(\sum_{m\geq1,\text{odd}} \frac{\partial}{\partial p_{m}} z^{-m}\right)=\sum_{m \in \mathbb{Z}} Q_{m}^{-} z^{-m},
\end{align}
which act on the ring of symmetric functions via the linear basis of power-sum symmetric functions.

Introduce symmetric functions $q_{m}$ as follows
\begin{align}\label{t:q(z)}
q(z)=\exp \left(\sum_{m\geq1,\text{odd}} \frac{2}{m} p_{m} z^{m}\right)=\sum_{m \geq 0} q_{m} z^{m}.
\end{align}
We call $ q_{m}$ the Schur $Q$-function associated to the one-row partition $(m)$. It follows from (\ref{t:q(z)}) that
\begin{align}\label{e:basic}
q_{m}=\sum_{\lambda\vdash_{o} m} \frac{2^{l(\lambda)}}{z_{\lambda}} p_{\lambda} \quad(m>0).
\end{align}
Define $q_{\lambda}=q_{\lambda_{1}} q_{\lambda_{2}} \cdots q_{\lambda_{l}}$ for any partition $\lambda=(\lambda_{1},\ldots,\lambda_{l})$. Then $q_{\lambda}$ ($\lambda$ odd) form a basis of $\Gamma_{\mathbb{Q}}.$

For any strict partition $\mu=(\mu_{1},\mu_{2},\ldots,\mu_{l})$, by \cite[Prop. 2.17, Eq.(3.8) for $t=-1$]{J2} and \cite[p.253]{Mac} we have
\begin{align}\label{e:Schur}
Q_{\mu_{1}}Q_{\mu_{2}}\cdots Q_{\mu_{l}}.1=\prod\limits_{i<j}\frac{1-R_{ij}}{1+R_{ij}}q_{\mu_{1}}q_{\mu_{2}}\cdots q_{\mu_{l}}=Q_{\mu}
\end{align}
where $Q_{\mu}$ is the Schur $Q$-function indexed by $\mu$ and $R_{ij}$ is the raising operator defined as $R_{ij}q_{\mu}=q_{(\mu_{1},\ldots ,\mu_{i}+1,\ldots ,\mu_{j}-1,\ldots , \mu_{l})}$.
\begin{prop} \cite[Props. 4.15, 4.16]{J1}\label{t:characterization} The components of $Q(z)$ and $Q^-(z)$ generate a Clifford algebra with the following relations
\begin{align}\label{e:exchange}
\left\{Q_{m}, Q_{n}^{-}\right\}=2 \delta_{m, n}; \quad \left\{Q_{m}, Q_{n}\right\}=\left\{Q_{m}^{-}, Q_{n}^{-}\right\}=(-1)^{n} 2 \delta_{m,-n}
\end{align}
where $\{A, B\}:=AB+BA$. Moreover, $Q_{-m}.1=Q_m^-.1=\delta_{m,0} \,  (m\geq0)$.
\end{prop}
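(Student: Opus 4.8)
The plan is to treat $Q(z)$ and $Q^-(z)$ as vertex operators and to compute their operator products by normal ordering, so that the defining relations are read off as coefficients of a formal delta function. First I would abbreviate the creation and annihilation parts by writing $A(z)=\sum_{m\geq 1,\text{odd}}\frac{2}{m}p_m z^m$ and $B(z)=\sum_{m\geq 1,\text{odd}}\frac{\partial}{\partial p_m}z^{-m}$, so that $Q(z)=e^{A(z)}e^{-B(z)}$ and $Q^-(z)=e^{-A(z)}e^{B(z)}$. The one computation that drives everything is the scalar (c-number) commutator
\begin{align*}
[B(z),A(w)]=\sum_{m\geq 1,\text{odd}}\frac{2}{m}\Big(\frac{w}{z}\Big)^m=\log\frac{z+w}{z-w},
\end{align*}
which follows from $[\partial/\partial p_m,p_n]=\delta_{m,n}$ together with the elementary identity $\sum_{m\geq 1,\text{odd}}x^m/m=\tfrac12\log\frac{1+x}{1-x}$. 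It is exactly the restriction to odd $m$ that replaces the usual kernel $1/(z-w)$ of the boson calculus by $\frac{z+w}{z-w}$, and this is ultimately responsible for both the factor $2$ and the sign $(-1)^n$ in the relations.

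Next I would apply the Hausdorff identity $e^{X}e^{Y}=e^{Y}e^{X}e^{[X,Y]}$, valid because $[X,Y]$ is central here, to commute the annihilation exponentials past the creation exponentials. This yields
\begin{align*}
Q(z)Q^-(w)=\ :\!Q(z)Q^-(w)\!:\ \frac{z+w}{z-w},\qquad Q^-(w)Q(z)=\ :\!Q(z)Q^-(w)\!:\ \frac{w+z}{w-z},
\end{align*}
with the same normal-ordered factor $:\!Q(z)Q^-(w)\!:=e^{A(z)-A(w)}e^{-B(z)+B(w)}$ in both products. The decisive step is then the formal-distribution bookkeeping: $Q(z)Q^-(w)$ lives in the region $|z|>|w|$ and $Q^-(w)Q(z)$ in $|w|>|z|$, so their rational prefactors are the two expansions $\iota_{z,w}$ and $\iota_{w,z}$ of the same function, whose sum telescopes to $2\delta(w/z)$ where $\delta(x)=\sum_{k\in\mathbb Z}x^k$. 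Using the substitution property of the delta distribution and the fact that the normal-ordered product is the identity on the diagonal, $:\!Q(z)Q^-(z)\!:=1$, I obtain $\{Q(z),Q^-(w)\}=2\delta(w/z)$, and extracting the coefficient of $z^m w^{-n}$ gives $\{Q_m,Q_n^-\}=2\delta_{m,n}$.

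The two remaining anticommutators are entirely parallel. For $Q(z)Q(w)$ and $Q(w)Q(z)$ the prefactor becomes $\frac{z-w}{z+w}$, whose two expansions add up to $2\delta(-w/z)$; here the relevant diagonal is $w=-z$, and since $A(z)+A(-z)=0$ and $B(z)+B(-z)=0$ for odd indices, the normal-ordered product again collapses to the identity. Reading off the coefficient of $z^m w^n$ in $\{Q(z),Q(w)\}=2\delta(-w/z)=2\sum_k(-1)^k z^{-k}w^k$ produces the sign and yields $\{Q_m,Q_n\}=(-1)^n 2\delta_{m,-n}$; the identical computation for $Q^-(z)Q^-(w)$ gives $\{Q_m^-,Q_n^-\}=(-1)^n2\delta_{m,-n}$. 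Finally, the vacuum action is immediate: since $B(z).1=0$, we have $Q(z).1=e^{A(z)}.1=q(z)=\sum_{m\geq0}q_m z^m$, which carries only nonnegative powers of $z$, so $Q_{-m}.1=\delta_{m,0}$ for $m\geq0$; likewise $Q^-(z).1=e^{-A(z)}.1$ has only nonnegative powers, giving $Q_m^-.1=\delta_{m,0}$ for $m\geq0$. I expect the genuine obstacle to be the formal-distribution step, namely keeping straight which product is expanded in which region and verifying that the antisymmetrized prefactors really collapse to a single delta supported on the diagonal, rather than any of the exponential algebra, which is routine once $[B(z),A(w)]$ is in hand.
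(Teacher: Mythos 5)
Your proposal is correct and follows essentially the same route as the source: the paper itself states this proposition without proof, citing \cite[Props.~4.15, 4.16]{J1}, and your normal-ordering computation with the central commutator $[B(z),A(w)]=\log\frac{z+w}{z-w}$, the two-region expansions collapsing to formal delta functions, and the vacuum evaluation is precisely the standard contraction argument of \cite{J1} --- the same Hausdorff-identity technique the paper deploys in its proof of Theorem \ref{t:Q}. The one point you rightly flag, justifying the substitution $\delta(w/z)\,{:}Q(z)Q^{-}(w){:}=\delta(w/z)\,{:}Q(z)Q^{-}(z){:}=\delta(w/z)$, is unproblematic here since applying the operators to any fixed symmetric function truncates the annihilation exponentials to finitely many terms.
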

From now on, {\it we fix $s$ as an odd positive integer} and $k$ as a positive integer unless specified otherwise.
Introduce the vertex operator $T^{(s)}(z)$ as follows:
\begin{align}
T^{(s)}(z)&=\exp \left(2\sum_{m\geq1,\text{odd}} \frac{p_{sm}}{m} z^{m}\right)=\sum_{m \geq 0} T^{(s)}_{m} z^{m}.
\end{align}
Note that $p_{m}^{-}=\frac{m}{2}\frac{\partial}{\partial p_m}$. The adjoint operator $T^{(s)-}(z)$ can be written as
\begin{align}
T^{(s)-}(z)&=\exp \left(\sum_{m\geq1,\text{odd}}s\frac{\partial}{\partial p_{sm}}z^{-m}\right)=\sum_{m \geq 0} T^{(s)-}_{m} z^{-m}.
\end{align}

The following facts are clear.
\begin{align}
T^{(s)-}_{m}=0 \,\, (m<0),\quad T^{(s)-}_{m}.1=\delta_{m,0} \, \, (m\geq 0);\\
T^{(s)}_{m}=\sum_{\lambda\vdash_{o} m}\frac{2^{l(\lambda)}p_{s\lambda}}{z_{\lambda}}=p_{s}\circ\sum_{\lambda\vdash_{o} m} \frac{2^{l(\lambda)}}{z_{\lambda}} p_{\lambda}=p_{s}\circ q_{m}
\end{align}
\begin{thm}\label{t:Q}  For a strict partition $\lambda=(\lambda_{1},\ldots,\lambda_{l})$, we have that
\begin{align}\label{e:Q}
T^{(s)-}_{k}Q_{\lambda}.1=\sum_{\nu\models k}2^{l(\nu)}Q_{\lambda-s\nu}.1,
\end{align}
where $\lambda-s\nu=(\lambda_{1}-s\nu_{1},\lambda_{2}-s\nu_{2},\ldots)$.
\end{thm}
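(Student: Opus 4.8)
The plan is to promote the stated identity to an equality of generating functions in auxiliary variables $w_1,\dots,w_l$ and a spectral variable $z$, and then recover \eqref{e:Q} by extracting a single coefficient. Writing $Q_{\lambda_1}\cdots Q_{\lambda_l}.1$ as the coefficient of $w_1^{\lambda_1}\cdots w_l^{\lambda_l}$ in $Q(w_1)\cdots Q(w_l).1$, and using $T^{(s)-}(z)=\sum_{m\ge0}T^{(s)-}_m z^{-m}$, the quantity $T^{(s)-}_k Q_\lambda.1$ is the coefficient of $z^{-k}w_1^{\lambda_1}\cdots w_l^{\lambda_l}$ in $T^{(s)-}(z)Q(w_1)\cdots Q(w_l).1$. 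It therefore suffices to commute $T^{(s)-}(z)$ to the right through the whole product and read off coefficients.

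First I would record the contraction of $T^{(s)-}(z)$ with a single $Q(w)$ by the Wick formula. Write $T^{(s)-}(z)=e^{A}$ with $A=\sum_{m\ge1,\text{odd}}s\,\partial_{p_{sm}}z^{-m}$, and $Q(w)=e^{B}e^{C}$ with $B=\sum_{m\ge1,\text{odd}}\tfrac{2}{m}p_m w^m$ and $C=-\sum_{m\ge1,\text{odd}}\partial_{p_m}w^{-m}$. Since $A$ and $C$ are differential operators they commute, whereas $[A,B]$ is a scalar: using $[\partial_{p_{sm}},p_n]=\delta_{sm,n}$ together with the fact that $sm$ is odd whenever $m$ is odd (as $s$ is odd), one finds
\[
[A,B]=\sum_{m\ge1,\text{odd}}\frac{2}{m}\Bigl(\frac{w^s}{z}\Bigr)^{m}=\log\frac{z+w^s}{z-w^s}.
\]
The Baker--Campbell--Hausdorff identity for a central commutator then gives
\[
T^{(s)-}(z)\,Q(w)=\frac{z+w^s}{z-w^s}\,Q(w)\,T^{(s)-}(z),
\]
where the rational prefactor is understood as the expansion $1+2\sum_{j\ge1}(w^s/z)^{j}$ in the region $|w^s|<|z|$.

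Next I would iterate this relation across the $l$ factors. Because $T^{(s)-}(z)$ is an even (bosonic) operator, no Clifford signs arise in moving it rightward, and since $T^{(s)-}(z).1=1$ (a consequence of $T^{(s)-}_m.1=\delta_{m,0}$) the trailing copy acts trivially on the vacuum, leaving
\[
T^{(s)-}(z)\,Q(w_1)\cdots Q(w_l).1=\prod_{i=1}^{l}\frac{z+w_i^s}{z-w_i^s}\;Q(w_1)\cdots Q(w_l).1.
\]
Extracting the coefficient of $z^{-k}$ in $\prod_{i}\bigl(1+2\sum_{j\ge1}(w_i^s/z)^{j}\bigr)$ produces $\sum 2^{l(\nu)}\prod_i w_i^{s\nu_i}$, summed over weak compositions $\nu=(\nu_1,\dots,\nu_l)$ of $k$, where $l(\nu)$ is the number of positive parts (each nonzero part contributes a factor $2$). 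Finally, reading off the coefficient of $w_1^{\lambda_1}\cdots w_l^{\lambda_l}$ turns the monomial $\prod_i w_i^{s\nu_i}$ into the shift $\lambda_i\mapsto\lambda_i-s\nu_i$ inside $Q(w_1)\cdots Q(w_l).1$, which yields exactly $\sum_{\nu\models k}2^{l(\nu)}Q_{\lambda-s\nu}.1$.

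The step I expect to require the most care is the contraction computation behind $[A,B]$: one must check that the derivatives $\partial_{p_{sm}}$ in $T^{(s)-}(z)$ pair only with the power sums $p_{sm}$ occurring in $Q(w)$, that the matched index $sm$ is genuinely odd so that it indeed appears in the sum defining $B$, and that the geometric expansion is taken in the correct regime so that the coefficient of $z^{-k}$ is the intended one. Once the scalar $\tfrac{z+w^s}{z-w^s}$ is pinned down, the remaining passage through the $l$ factors and the two coefficient extractions are routine formal bookkeeping.
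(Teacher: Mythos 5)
Your proposal is correct and takes essentially the same route as the paper: both rest on the same central-commutator (BCH) computation $[A,B]=\sum_{m\geq1,\text{odd}}\frac{2}{m}(w^s/z)^m=\log\frac{z+w^s}{z-w^s}$, yielding $T^{(s)-}(z)Q(w)=\frac{z+w^s}{z-w^s}\,Q(w)\,T^{(s)-}(z)$ with the expansion $1+2\sum_{j\geq1}(w^s/z)^j$. The only difference is organizational: the paper extracts the mode relation $T^{(s)-}_{k}Q_{m}=Q_{m}T^{(s)-}_{k}+2\sum_{r=1}^{k}Q_{m-sr}T^{(s)-}_{k-r}$ and then inducts on $l(\lambda)$, whereas you keep all spectral variables and read off the coefficient of $z^{-k}w_1^{\lambda_1}\cdots w_l^{\lambda_l}$ in one step, which is an equivalent piece of bookkeeping.
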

\begin{proof} For any two operators $A$ and $B$, if $[A,B]$ commutes with both $A$ and $B$, by \cite[Thm. 5.1]{H}, we have that
\begin{align*}
\exp(A)\exp(B)=\exp(B)\exp(A)\exp[A,B].
\end{align*}
(Indeed, \cite[Thm. 5.1]{H} is applied once to the left-hand-side
and twice to the right-hand-side).

It follows that for $|z|>|w|$
\begin{align*}
T^{(s)-}(z)Q(w)&=Q(w)T^{(s)-}(z)\exp \left(\sum_{m\geq1,\text{odd}}\left[s\frac{\partial}{\partial p_{sm}},\frac{2}{sm}p_{sm}\right](\frac{w^{s}}{z})^{m}\right)\\
&=Q(w)T^{(s)-}(z)\exp \left(\sum_{m\geq1,\text{odd}}\frac{2}{m}(\frac{w^{s}}{z})^{m}\right).
\end{align*}
By Taylor's expansion $\ln\frac{1+x}{1-x}=\ln(1+x)-\ln(1-x)=\sum_{m\geq1,\text{odd}}\frac{2}{m}x^{m}$ for $|x|<1$. Thus,
$$\exp \left(\sum_{m\geq1,\text{odd}}\frac{2}{m}(\frac{w^{s}}{z})^{m}\right)=\frac{z+w^{s}}{z-w^{s}},$$
which yields
$$T^{(s)-}(z)Q(w)=Q(w)T^{(s)-}(z)\frac{z+w^{s}}{z-w^{s}}=Q(w)T^{(s)-}(z)\left(1+2 \sum_{k \geq 1}\left(w^s / z\right)^k\right).$$
Taking coefficients of $z^{-k}w^{m}$, we have
\begin{align*}
T^{(s)-}_{k}Q_{m}=Q_{m}T^{(s)-}_{k}+2\sum_{r=1}^{k}Q_{m-sr}T^{(s)-}_{k-r}.
\end{align*}
Then \eqref{e:Q} follows by induction on $l(\lambda)$.
\end{proof}

\subsection{\bf Computation of $T^{(s)-}_{k}Q_{\lambda}.1$ via Pfaffians.}

Recall that the Pfaffian Pf$(A)$
of an antisymmetric matrix $A=(a_{i,j})_{2n\times2n}$ satisfies $\text{Pf(A)}^{2}=\det(A)$ and it is explicitly given by
\begin{align}
\text{Pf(A)}=\sum\limits_{\sigma} sgn(\sigma)\prod _{i=1}^{n}a_{\sigma(2i-1)\sigma(2i)}
\end{align}
summed over all $2$-shuffles of $\{1,2,\cdots,2n\},$ i.e. $ \sigma\in\mathfrak{S}_{2n}$ such that $\sigma(2i-1)<\sigma(2i)$ for $1\leq i\leq n,$ and $\sigma(2j-1)<\sigma(2j+1)$ for $1\leq j\leq n-1.$

Pfaffian has the following Laplace expansion
\begin{align}\label{e:expansion}
\text{Pf(A)}=\sum\limits_{j=2}^{2n}(-1)^{j}a_{1j}\text{Pf($A_{1j}$)}.
\end{align}
More generally, fixing $i$, we have
\begin{align}\label{e:decomposition2}
\text{Pf(A)}=(-1)^{i-1}\sum\limits_{j\neq i}(-1)^{j}:a_{ij}:\text{Pf($A_{ij}$)}.
\end{align}
where $:a_{ij}:=a_{ij}$ if $i<j$ or $a_{ji}$ if $i>j$, and
$A_{ij}$ is the submatrix obtained from $A$ by deleting the $i$th, $j$th rows and $i$th, $j$th columns. We have supplied a proof of \eqref{e:decomposition2} in the appendix.

Suppose the operators $Q_i, Q_i^-$ satisfy the Clifford algebra relations \eqref{e:exchange}.
Then for general linear combinations $\omega_i$ of $Q_i$'s and $Q_i^-$'s:
\begin{align*}
\omega_i=\sum_{j\in\mathbb{Z}}v_{ij}Q_j+\sum_{j\in\mathbb{Z}}u_{ij}Q^-_j, \quad i=1,2,\cdots,m+n.
\end{align*}
The Wick formula \cite[Appendix, p.363, line 7-11]{DJKM} says that
\begin{align}\label{e:wick}
\langle \omega^{-}_{m}\omega^{-}_{m-1}\cdots \omega^{-}_{1}.1,  \omega_{m+1}\omega_{m+2}\cdots \omega_{m+n}.1\rangle_{-1}=
\begin{cases}
\text{Pf}(A)& \text{if $n+m$ is even}\\
0& \text{otherwise}
\end{cases}
\end{align}
where $A$ is the $(n+m)$ by $(n+m)$ antisymmetric matrix with entries $A_{ij}=
\langle \omega^{-}_{i}.1, \omega_{j}.1 \rangle_{-1}$, $1\leq i< j\leq n+m$.

Define $f_{j}$  ($j\in\mathbb{Z}$) by
\begin{align*}
f_{j}=\left\{\begin{array}{ll}
1 & \text {if $j=0$}\\
2 & \text {if $j>0$ and $s|j$}\\
0 & \text {otherwise}.
\end{array}\right.
\end{align*}
More generally, define ($m,n\geq0$)
\begin{align}
f_{(m,n)}=\left\{\begin{array}{ll}f_{m}f_{n}+2\sum_{j=1}^{n}(-1)^{j}f_{m+j}f_{n-j}& \text {if $n\geq1$}\\
f_m &\text {if $n=0$}.
\end{array}\right.
\end{align}

\begin{lem}\label{e:identity2}
Let $m,n\geq0$, $m+n\neq0$ and denote $r$ by $n\equiv r~ (mod~ s)$ with $0\leq r<s$, then\\
(1) $f_{(m,n)}=
\begin{cases}
4(-1)^r& \text {if $s\mid(n+m)$ and $r>0$}\\
2& \text {if $s\mid m$ and $n=0$}\\
-2& \text {if $s\mid n$ and $m=0$}\\
0& \text {otherwise}
\end{cases}$;\\
(2) $f_{(m,n)}=-f_{(n,m)}$;\\
(3) $f_{(m,n)}=f_{(n,m)}=0$ for $r=0$ and $m,n>0.$
\end{lem}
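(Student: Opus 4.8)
The plan is to prove part (1) by directly evaluating the alternating sum that defines $f_{(m,n)}$, and then to read off parts (2) and (3) from the explicit values in (1).

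I would begin by recording the support of $f_j$: it is nonzero only at $j=0$, where $f_0=1$, and at positive multiples of $s$, where $f_j=2$. This has two immediate consequences. First, $f_mf_n\neq0$ forces $s\mid m$ and $s\mid n$, hence $s\mid(m+n)$. Second, in $\sum_{j=1}^{n}(-1)^jf_{m+j}f_{n-j}$ every index obeys $m+j\geq1$, so a summand survives only when $s\mid(m+j)$ (contributing $2$) and $n-j$ is either $0$ or a positive multiple of $s$; both conditions force $j\equiv n\equiv-m\pmod s$. Hence $f_{(m,n)}$ vanishes whenever $s\nmid(m+n)$, which already settles the ``otherwise'' case.

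Assuming $s\mid(m+n)$, I would write $n=qs+r$ with $0\leq r<s$ and list the surviving indices explicitly: $j=r+ts$ with $t=0,\dots,q$ when $r>0$, and $j=ts$ with $t=1,\dots,q$ when $r=0$. The decisive use of the standing hypothesis that $s$ is odd occurs here, through $(-1)^{ts}=(-1)^t$, which renders the surviving part a genuine alternating sum. Separating the boundary index $j=n$ (which carries the factor $f_0=1$ instead of $2$) from the interior indices (which carry $2\cdot2=4$), the alternating sum collapses, and one finds $\sum_{j=1}^{n}(-1)^jf_{m+j}f_{n-j}=2(-1)^r$ for $r>0$ and $=-2$ for $r=0,\ n>0$. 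Adding the outside term $f_mf_n$ then finishes (1): it is $0$ for $r>0$ (since $s\nmid m$ there), equal to $4$ for $r=0$ with $m,n>0$, and equal to $f_0f_n=2$ for $m=0<n$ with $s\mid n$; the remaining endpoint $n=0$ is read straight off $f_{(m,0)}=f_m$. Matching $f_mf_n+2\sum$ against the four listed outcomes gives (1).

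With (1) established, part (3) is exactly its subcase $r=0$, $m,n>0$, where $f_mf_n+2\sum=4-4=0$ (and symmetrically for $f_{(n,m)}$). Part (2) then follows by comparing values: when $s\mid(m+n)$ and $r>0$ the residue of $m$ is $s-r$, and since $s$ is odd $(-1)^{s-r}=-(-1)^r$, so $f_{(n,m)}=4(-1)^{s-r}=-f_{(m,n)}$; the remaining cases pair $2$ with $-2$ (the $n=0$ versus $m=0$ endpoints) or $0$ with $0$. I expect the only real obstacle to be the bookkeeping in the third step, namely correctly isolating the endpoint $j=n$ and tracking the parities of $q$ and $r$ so that the interior alternating sum collapses to the stated constant; the oddness of $s$ must be invoked at precisely this point, and everything else is forced by the support of $f_j$.
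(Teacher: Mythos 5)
Your proof is correct and takes essentially the same route as the paper's: both arguments expand $f_{(m,n)}$ over the support of $f_j$ (so only indices $j\equiv n\pmod s$ survive), use the oddness of $s$ via $(-1)^{ts}=(-1)^t$ to collapse the resulting alternating sum, and deduce parts (2) and (3) directly from the explicit values in (1). The only difference is that you write out the ``straightforward calculation'' the paper leaves implicit, correctly isolating the endpoint $j=n$ with its factor $f_0=1$ and the sign flip $(-1)^{s-r}=-(-1)^r$.
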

\begin{proof}
Both (2) and (3) follow directly from (1). So it is sufficient to prove (1). If $n=0$, then $f_{(m,n)}=f_{m}$ and (1) holds trivially. Suppose $n>0$ and $n=ts+r,$ $r=0,1,\ldots,s-1,$ then
\begin{align*}
f_{(m,n)}=
\begin{cases}
f_mf_n+2(-1)^nf_{m+n}f_0+2(-1)^{n-s}f_{m+n-s}f_{s}+\cdots+2(-1)^{n-ts}f_{m+n-ts}f_{ts}& \text {if $r>0$}\\
f_mf_n+2(-1)^nf_{m+n}f_0+2(-1)^{n-s}f_{m+n-s}f_{s}+\cdots+2(-1)^{n-(t-1)s}f_{m+n-(t-1)s}f_{(t-1)s}& \text {if $r=0$}.
\end{cases}
\end{align*}
So $f_{(m,n)}=0$ unless $s\mid(m+n).$ Now we suppose $s\mid(m+n)$, if $m=0$ then $n=st$, so $f_{(m,n)}=-2$. If $m>0$ it follows from a straightforward calculation that
$
f_{(m,n)}=
\begin{cases}
4(-1)^r& \text {if $r>0$}\\
0& \text {if $r=0$}.
\end{cases}
$
\end{proof}

Let $\mu=(\mu_1,\mu_2,\cdots,\mu_{a})\vdash n$, $\lambda=(\lambda_1,\lambda_2,\ldots, \lambda_b)\vdash n+sk$ be two strict partitions such that $\mu\subset \lambda$ and $a+b$ is even (where $\mu_a=0$ for $l(\mu)+l(\lambda)$ is odd). In order to get our result, we need to compute the coefficients of $Q_{\mu}.1$ that appear in $T^{(s)-}_{k}Q_{\lambda}.1$. Note that $Q(-z)=Q^{-}(z)$ so $(-1)^{m}Q_{-m}=Q^{-}_{m}$, which implies
$$\langle Q^{-}_{-\lambda}.1, Q_{\mu}.1\rangle_{-1}=(-1)^{|\la|}\langle Q_{\lambda}.1, Q_{\mu}.1\rangle_{-1}=\delta_{\lambda,\mu}(-1)^{|\mu|}2^{l(\mu)}.$$
On the other hand, Theorem \ref{t:Q} yields that
the coefficient $C(\lambda/\mu)$ of $Q_{\mu}.1$ in $T^{(s)-}_{k}Q_{\lambda}.1$ can be expressed as follows:
\begin{align*}
C(\lambda/\mu)=&2^{-l(\mu)}(-1)^{n}\langle  Q^{-}_{-\mu}.1,T^{(s)-}_{k}Q_{\lambda}.1 \rangle\\
=&2^{-l(\mu)}(-1)^{n}\left \langle Q^{-}_{-\mu}.1, \sum_{\nu\models k}2^{l(\nu)}Q_{\lambda-s\nu}.1\right \rangle_{-1}.
\end{align*}
Then
\begin{align*}
C(\lambda/\mu)=&
2^{-l(\mu)}(-1)^{n}\left \langle Q^{-}_{-\mu_{1}}\cdots Q^{-}_{-\mu_{a}}.1, \sum_{\mbox{\tiny$\begin{array}{c}
\nu_1,\nu_2,\cdots,\nu_{b}\geq0\\ \nu_1+\nu_2+\cdots+\nu_{b}=sk\end{array}$}}f_{\nu_1}\cdots f_{\nu_b}Q_{\lambda_1-\nu_1}\cdots Q_{\lambda_{b}-\nu_{b}}.1 \right \rangle_{-1}\\
=&2^{-l(\mu)}(-1)^{n}\left \langle Q^{-}_{-\mu_{1}}\cdots Q^{-}_{-\mu_{a}}.1, (\sum\limits_{\nu_1\geq0}f_{\nu_1}Q_{\lambda_1-\nu_1}) \cdots(\sum\limits_{\nu_b\geq0}f_{\nu_b}Q_{\lambda_b-\nu_b}).1\right \rangle_{-1}.
\end{align*}
Note that: $\langle Q^{-}_{-\mu_{1}}Q^{-}_{-\mu_2}\cdots Q^{-}_{-\mu_{a}}.1, Q_{a_1}Q_{a_2}\cdots Q_{a_m}.1 \rangle_{-1}=0$ unless $\sum a_i=|\mu|,$ where $a_i\in \mathbb{Z}.$

It follows from \eqref{e:wick} that
\begin{align*}
C(\lambda/\mu)=2^{-l(\mu)}(-1)^{n}\text{Pf}(M(\lambda/\mu))_{i,j=1,2,\ldots,a+b}.
\end{align*}
where $M(\lambda/\mu)$ is an $(a+b)$ by $(a+b)$ antisymmetric block matrix with entries
\begin{gather*}
\begin{pmatrix}M_{a\times a} & M_{a\times b} \\
 -M^{T}_{a\times b} & M_{b\times b}
\end{pmatrix}_{(a+b)\times (a+b)}
\end{gather*}
where $M_{a\times b}$, $M_{a\times a}$ and $M_{b\times b}$ are antisymmetric matrices.
By Proposition \ref{t:characterization} and the Wick formula, we have:\\
For $1\leq i<j\leq a$,
\begin{align*}
(M_{a\times a})_{i,j}&=\langle Q^{-}_{-\mu_{a-i+1}}.1, Q_{-\mu_{a-j+1}}.1\rangle_{-1}=0
\quad\quad\text{(since $Q_{-\mu_{a-j+1}}.1=0, 1<j\leq a$)}.
\end{align*}
For $1\leq i\leq a, 1\leq j\leq b$,
\begin{align*}
(M_{a\times b})_{i,j}&=\langle Q^{-}_{-\mu_{a-i+1}}.1, \sum\limits_{\nu_j\geq0}f_{\nu_j}Q_{\lambda_{j}-\nu_j}.1\rangle_{-1}\\
&=\begin{cases}
f_{\lambda_{j}}& \text{if $i=1$, and $\mu_{a}=0$;}\\
2(-1)^{\mu_{a-i+1}}f_{\lambda_{j}-\mu_{a-i+1}}& \text{otherwise}.
\end{cases}
\end{align*}
For $1\leq i<j\leq b$,
\begin{align*}
(M_{b\times b})_{i,j}&=\left \langle \sum\limits_{\nu_i\geq0}f_{\nu_i}Q^{-}_{\lambda_i-\nu_i}.1, \sum\limits_{\nu_j\geq0}f_{\nu_j}Q_{\lambda_j-\nu_j}.1\right \rangle_{-1}\\
&=f_{\lambda_i}f_{\lambda_j}+2\sum\limits_{\nu_j\geq0}^{\lambda_j-1}(-1)^{\lambda_j-\nu_j}f_{\lambda_i+\lambda_j-\nu_j}f_{\nu_j}\\
&=f_{(\lambda_i,\lambda_j)}.
\end{align*}

We introduce a modified matrix $\tilde{M}(\lambda/\mu)$ of $M(\lambda/\mu)$ as an $(a+b)$ by $(a+b)$ antisymmetric block matrix with entries
\begin{gather*}
\begin{pmatrix}\tilde{M}_{a\times a} & \tilde{M}_{a\times b} \\
 -\tilde {M}^{T}_{a\times b} & \tilde{M}_{b\times b}
\end{pmatrix}_{(a+b)\times (a+b)}
\end{gather*}
where $\tilde {M}_{a\times b}$, $\tilde{M}_{a\times a}$ and $\tilde{M}_{b\times b}$ are antisymmetric matrices, and
\begin{align*}
(\tilde{M}_{a\times a})_{i,j}&=0, \quad 1\leq i<j\leq a,\\
(\tilde{M}_{a\times b})_{i,j}&=f_{\lambda_{j}-\mu_{a-i+1}}, \quad 1\leq i\leq a, 1\leq j\leq b,\\
(\tilde{M}_{b\times b})_{i,j}&=f_{(\lambda_i,\lambda_j)}, \quad 1\leq i<j\leq b.
\end{align*}

Summarizing the above, we have shown the following.
\begin{thm}\label{t:Pf} Let $k$ be a positive integer and $\lambda\in SP_{n+sk}$, then we have
\begin{align}\label{e:Pf}
T^{(s)-}_{k}Q_{\lambda}.1=\sum_{\mu}\operatorname{Pf}(\tilde{M}(\lambda/\mu))Q_{\mu}.1
\end{align}
where the sum runs over all $\mu\in\mathcal{SP}_n$ such that $\mu\subset\lambda$.
\end{thm}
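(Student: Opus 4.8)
The plan is to recognize that essentially all the work is already contained in the coefficient computation preceding the statement, and that what remains is a single diagonal-congruence identity for Pfaffians. Recall that the Wick formula \eqref{e:wick} together with the block computation above shows that the coefficient of $Q_{\mu}.1$ in $T^{(s)-}_{k}Q_{\lambda}.1$ is
\begin{align*}
C(\lambda/\mu)=2^{-l(\mu)}(-1)^{n}\operatorname{Pf}(M(\lambda/\mu)),
\end{align*}
and that $\tilde{M}(\lambda/\mu)$ was defined by deleting the scalar $2(-1)^{\mu_{a-i+1}}$ from the $i$-th row of the off-diagonal block $M_{a\times b}$ while leaving the $a\times a$ and $b\times b$ blocks untouched. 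Hence it suffices to express $\operatorname{Pf}(M(\lambda/\mu))$ in terms of $\operatorname{Pf}(\tilde{M}(\lambda/\mu))$ and to verify that the prefactor $2^{-l(\mu)}(-1)^{n}$ is cancelled.

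First I would exhibit a diagonal matrix $D=\operatorname{diag}(d_{1},\dots,d_{a},1,\dots,1)$ of size $(a+b)\times(a+b)$ implementing this row scaling, where $d_{i}=2(-1)^{\mu_{a-i+1}}$ except that $d_{1}=1$ in the padded case $\mu_{a}=0$. A term-by-term comparison of the blocks then gives $M(\lambda/\mu)=D\,\tilde{M}(\lambda/\mu)\,D$: the $a\times a$ and $b\times b$ blocks are fixed because the relevant diagonal entries of $D$ equal $1$, and $(M_{a\times b})_{i,j}=d_{i}(\tilde{M}_{a\times b})_{i,j}$ recovers both cases of the definition of $M_{a\times b}$, the identity $f_{\lambda_{j}-\mu_{a}}=f_{\lambda_{j}}$ for $\mu_{a}=0$ making the exceptional first row consistent with $d_{1}=1$. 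Since $D$ is symmetric, $D\tilde{M}(\lambda/\mu)D$ is again antisymmetric, so this is a legitimate congruence of antisymmetric matrices.

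Next I would invoke the congruence property $\operatorname{Pf}(B^{T}AB)=\det(B)\operatorname{Pf}(A)$ with $B=D$, which yields $\operatorname{Pf}(M(\lambda/\mu))=\det(D)\operatorname{Pf}(\tilde{M}(\lambda/\mu))$, and then evaluate $\det(D)=\prod_{i}d_{i}$. In the unpadded case $a=l(\mu)$ one finds $\det(D)=\prod_{i=1}^{a}2(-1)^{\mu_{a-i+1}}=2^{l(\mu)}(-1)^{|\mu|}=2^{l(\mu)}(-1)^{n}$; in the padded case $a=l(\mu)+1$ with $\mu_{a}=0$, the extra row contributes $d_{1}=1$ and $(-1)^{\mu_{a}}=1$, so the product is again $2^{l(\mu)}(-1)^{n}$. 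Substituting back collapses the prefactor,
\begin{align*}
C(\lambda/\mu)=2^{-l(\mu)}(-1)^{n}\cdot 2^{l(\mu)}(-1)^{n}\operatorname{Pf}(\tilde{M}(\lambda/\mu))=\operatorname{Pf}(\tilde{M}(\lambda/\mu)),
\end{align*}
and expanding $T^{(s)-}_{k}Q_{\lambda}.1=\sum_{\mu\in\mathcal{SP}_{n}}C(\lambda/\mu)Q_{\mu}.1$, with the Pfaffian vanishing for those $\mu\not\subset\lambda$ that do not occur, gives \eqref{e:Pf}.

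The computational substance lies in the block entries already determined, so the argument itself is short; the one point I would watch most carefully is the sign and normalization bookkeeping across the two parity cases of $l(\mu)+l(\lambda)$. The crux is that the exceptional first row arising when $\mu_{a}=0$ contributes the factor $1$, not $2(-1)^{0}=2$, so that $\det(D)=2^{l(\mu)}(-1)^{n}$ holds uniformly and the prefactor cancels in both cases; getting this normalization right is precisely what makes the clean formula \eqref{e:Pf} emerge.
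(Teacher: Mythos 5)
Your proposal is correct and follows essentially the same route as the paper, which derives the theorem directly from the Wick-formula computation of $C(\lambda/\mu)=2^{-l(\mu)}(-1)^{n}\operatorname{Pf}(M(\lambda/\mu))$ and the block entries preceding the statement; your diagonal congruence $M(\lambda/\mu)=D\,\tilde{M}(\lambda/\mu)\,D$ with $\operatorname{Pf}(DAD)=\det(D)\operatorname{Pf}(A)$ and $\det(D)=2^{l(\mu)}(-1)^{n}$ simply makes explicit the cancellation the paper leaves implicit in ``summarizing the above.'' One cosmetic remark: the $a\times a$ block is preserved under the congruence because it is the zero block (its entries are scaled by $d_id_j$, not by $1$), not because the corresponding diagonal entries of $D$ equal $1$.
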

By duality, we have that
\begin{cor}\label{t:cor2} Let $k$ be a positive integer and $\mu\in SP_{n}$, then we have
\begin{align}
T^{(s)}_{k}Q_{\mu}.1=\sum_{\la}2^{l(\mu)-l(\la)}\operatorname{Pf}(\tilde{M}(\lambda/\mu))Q_{\la}.1
\end{align}
where the sum runs over all $\la\in\mathcal{SP}_{n+sk}$ such that $\mu\subset\lambda$.
\end{cor}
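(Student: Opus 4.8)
The plan is to exploit the fact that $T^{(s)-}_k$ is the adjoint of $T^{(s)}_k$ with respect to $\langle\cdot,\cdot\rangle_{-1}$, so that Corollary \ref{t:cor2} becomes essentially the transpose of Theorem \ref{t:Pf}. Since the Schur $Q$-functions $\{Q_\la.1 : \la\in\mathcal{SP}\}$ form an orthogonal basis with $\langle Q_\la.1, Q_\la.1\rangle_{-1}=2^{l(\la)}$, I would first write
\[
T^{(s)}_k Q_\mu.1=\sum_{\la} c_\la\, Q_\la.1,
\]
where $\la$ ranges over $\mathcal{SP}$, and then recover each coefficient by pairing against $Q_\la.1$:
\[
c_\la=2^{-l(\la)}\langle T^{(s)}_k Q_\mu.1,\ Q_\la.1\rangle_{-1}.
\]
Because $T^{(s)}_k$ raises the degree by $sk$, only $\la\in\mathcal{SP}_{n+sk}$ can occur, which already matches the index set in the statement.

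The key step is the adjointness $\langle T^{(s)}_k f, g\rangle_{-1}=\langle f, T^{(s)-}_k g\rangle_{-1}$. This follows directly from the two generating-function definitions: the adjoint of multiplication by $p_{sm}$ is $p_{sm}^-=\frac{sm}{2}\frac{\partial}{\partial p_{sm}}$, so the adjoint of the exponent $2\frac{p_{sm}}{m}z^m$ of $T^{(s)}(z)$ is exactly $s\frac{\partial}{\partial p_{sm}}z^m$, i.e. the exponent of $T^{(s)-}(z)$ with $z$ replaced by $z^{-1}$. Taking the coefficient of $z^k$ on both sides of $\langle T^{(s)}(z) f, g\rangle_{-1}=\langle f, T^{(s)-}(z^{-1}) g\rangle_{-1}$ then yields the desired pairing between the single operators $T^{(s)}_k$ and $T^{(s)-}_k$.

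Applying this, $c_\la=2^{-l(\la)}\langle Q_\mu.1,\ T^{(s)-}_k Q_\la.1\rangle_{-1}$, and substituting the Pfaffian expansion of Theorem \ref{t:Pf},
\[
T^{(s)-}_k Q_\la.1=\sum_{\mu'}\operatorname{Pf}(\tilde{M}(\la/\mu'))\,Q_{\mu'}.1
\]
taken over strict $\mu'\subset\la$ of size $n$, orthogonality collapses the sum to the single term $\mu'=\mu$. This leaves $c_\la=2^{-l(\la)}\operatorname{Pf}(\tilde{M}(\la/\mu))\,2^{l(\mu)}=2^{l(\mu)-l(\la)}\operatorname{Pf}(\tilde{M}(\la/\mu))$, which is precisely the asserted coefficient; the support condition $\mu\subset\la$ is inherited from Theorem \ref{t:Pf} and is in any case automatically enforced by the vanishing of the Pfaffian otherwise.

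I do not expect any genuine obstacle here: the whole argument is a clean duality once the adjointness is in hand, and the only points requiring care are the bookkeeping of the $z\leftrightarrow z^{-1}$ inversion in the generating functions and the tracking of the powers $2^{l(\la)}$, $2^{l(\mu)}$ coming from the normalization $\langle Q_\nu.1,Q_\nu.1\rangle_{-1}=2^{l(\nu)}$ of the basis. In particular, no combinatorics beyond Theorem \ref{t:Pf} is needed.
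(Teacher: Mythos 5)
Your proposal is correct and is essentially the paper's own argument: the paper deduces Corollary \ref{t:cor2} from Theorem \ref{t:Pf} simply ``by duality,'' i.e.\ exactly the adjointness $\langle T^{(s)}_k f, g\rangle_{-1}=\langle f, T^{(s)-}_k g\rangle_{-1}$ (which follows, as you note, from $p_m^-=\tfrac{m}{2}\tfrac{\partial}{\partial p_m}$ applied to the exponents of the vertex operators) combined with the orthogonality $\langle Q_\la.1,Q_\mu.1\rangle_{-1}=\delta_{\la\mu}2^{l(\la)}$. Your extraction of the coefficient $c_\la=2^{-l(\la)}\langle Q_\mu.1,\,T^{(s)-}_kQ_\la.1\rangle_{-1}=2^{l(\mu)-l(\la)}\operatorname{Pf}(\tilde{M}(\la/\mu))$, the degree argument restricting to $\la\in\mathcal{SP}_{n+sk}$, and the support condition $\mu\subset\la$ inherited from Theorem \ref{t:Pf} all match the paper's intended derivation.
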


\subsection{\bf Properties and evaluation of $\operatorname{Pf}(\tilde{M}(\lambda/\mu))$.}
In this subsection, we will give some properties and formulae for $\operatorname{Pf}(\tilde{M}(\lambda/\mu))$ so that we can compute $\operatorname{Pf}(\tilde{M}(\lambda/\mu))$ more efficiently. Based on these, we can give a combinatorial Murnaghan-Nakayama rule. We always assume $\mu=(\mu_1,\mu_2,\cdots,\mu_a)\subset\lambda=(\la_1,\la_2,\cdots,\la_b)$ such that $a+b$ is even unless specified otherwise (which is always possible by appending a zero to $\mu$). In other words, $a=l(\mu)$ or $l(\mu)+1$ and $b=l(\la)$.
We write ${\rm Pf}(\tilde{M}(\lambda))$ for short when $\mu=\emptyset$.
Note that this implicitly implies that $|\lambda|-|\mu|\equiv 0~ (mod~ s)$.

We introduce some notations. Let $\lambda$ be a strict partition. Denote
\begin{align*}
N_r(\lambda)&=\{\lambda_i\mid \lambda_i\equiv r ({\rm mod}~ s)\}, \quad n_r(\lambda)={\rm Card}N_{r}(\lambda), \quad r=0,1,2,\cdots,s-1.\\
\lambda^{\hat{i}}&=(\lambda_1,\lambda_2,\cdots,\lambda_{i-1},\lambda_{i+1},\cdots), \quad \lambda^{[i]}=(\lambda_{i+1},\lambda_{i+2},\cdots).
\end{align*}
Arrange the elements of $N_r(\lambda)$ in the descending order:
\begin{align*}
N_r(\lambda)_{>}=\{\lambda_{r,1}>\la_{r,2}>\cdots>\la_{r,n_r(\lambda)}\}.
\end{align*}
Then $N_r(\la)_>$ is also a strict partition and a subpartition of $\lambda$. It is clear that $\lambda$ can be written as the non-intersecting union of these subpartitions.
\begin{align*}
\lambda=\biguplus_{r=0}^{s-1}N_r(\lambda)_{>}.
\end{align*}

\begin{lem}\label{t:Pfit}
Fix $1\leq j\leq a$. If $\mu_j\in N_r(\mu)$, then we have the following iterative formula for ${\rm Pf}(\tilde{M}(\lambda/\mu)).$
\begin{align}\label{e:Pfit}
{\rm Pf}(\tilde{M}(\lambda/\mu))=\sum\limits_{\lambda_i\in N_r(\la),\lambda_i\geq\mu_j}(-1)^{i-j}f_{\lambda_i-\mu_j}{\rm Pf}(\tilde{M}(\lambda^{\hat{i}}/\mu^{\hat{j}})).
\end{align}
\end{lem}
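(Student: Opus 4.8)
The plan is to obtain \eqref{e:Pfit} as a single application of the generalized Laplace expansion \eqref{e:decomposition2} of the Pfaffian, expanding along the row of $\tilde{M}(\lambda/\mu)$ that encodes the chosen part $\mu_j$, and then discarding the terms that vanish for structural reasons. Recall from the construction of the block matrix that its first $a$ rows are indexed by the parts of $\mu$ in reverse order (row $p$ corresponds to $\mu_{a-p+1}$, since the Wick ordering is $Q^{-}_{-\mu_1}\cdots Q^{-}_{-\mu_a}$), while its last $b$ rows are indexed by $\lambda_1,\dots,\lambda_b$ in the natural order. Thus the part $\mu_j$ occupies row $p:=a-j+1$.

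First I would set $i=a-j+1$ in \eqref{e:decomposition2} to get
\[
\operatorname{Pf}(\tilde{M}(\lambda/\mu))=(-1)^{p-1}\sum_{q\neq p}(-1)^{q}\,:\!\tilde{M}_{p,q}\!:\operatorname{Pf}(\tilde{M}_{p,q}).
\]
Every column $q\le a$ lies in the top-left block $\tilde{M}_{a\times a}$, whose off-diagonal entries are all $0$, so all such terms drop out. For $q=a+i$ with $1\le i\le b$ the entry is $\tilde{M}_{p,\,a+i}=(\tilde{M}_{a\times b})_{p,i}=f_{\lambda_i-\mu_j}$, and since $p\le a<a+i$ the normal-ordering bracket leaves it unchanged. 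Now I would invoke the definition of $f$: the factor $f_{\lambda_i-\mu_j}$ is nonzero exactly when $\lambda_i\ge\mu_j$ and $s\mid(\lambda_i-\mu_j)$; because $\mu_j\in N_r(\mu)$ fixes the common residue $r$, this is precisely the condition $\lambda_i\in N_r(\lambda)$ with $\lambda_i\ge\mu_j$ appearing in \eqref{e:Pfit}. The accompanying sign collapses as $(-1)^{p-1}(-1)^{a+i}=(-1)^{(a-j+1)-1}(-1)^{a+i}=(-1)^{2a+i-j}=(-1)^{i-j}$, matching the stated factor.

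Finally I would identify the Pfaffian minor $\operatorname{Pf}(\tilde{M}_{p,\,a+i})$. Deleting row and column $p=a-j+1$ removes $\mu_j$ from the $\mu$-block (leaving the reversed list of $\mu^{\hat{j}}$), and deleting row and column $a+i$ removes $\lambda_i$ from the $\lambda$-block (leaving $\lambda^{\hat{i}}$ in natural order). Since every entry of $\tilde{M}_{a\times b}$ and $\tilde{M}_{b\times b}$ depends only on the values of the surviving parts and not on how many parts are present, the resulting minor coincides entry-for-entry with $\tilde{M}(\lambda^{\hat{i}}/\mu^{\hat{j}})$; substituting this in yields \eqref{e:Pfit}. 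I expect the only real care to lie in the index bookkeeping — tracking the reversal of the $\mu$-block, confirming that expanding a $\mu$-row ($p\le a$) against a $\lambda$-column ($a+i>a$) makes the normal-ordering bracket trivial, and verifying the sign cancellation $2a+i-j\equiv i-j$. There is no genuine analytic difficulty here; it is a direct, if delicate, application of \eqref{e:decomposition2}.
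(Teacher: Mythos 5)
Your proposal is correct and follows essentially the same route as the paper: the paper's proof likewise observes that the top-left block $\tilde{M}_{a\times a}$ vanishes and obtains \eqref{e:Pfit} by a single application of the expansion \eqref{e:decomposition2} along the $(a+1-j)$th row, which is exactly your row $p=a-j+1$. Your write-up merely makes explicit the sign computation $(-1)^{p-1}(-1)^{a+i}=(-1)^{i-j}$, the vanishing criterion for $f_{\lambda_i-\mu_j}$, and the identification of the minor with $\tilde{M}(\lambda^{\hat{i}}/\mu^{\hat{j}})$ — details the paper leaves implicit.
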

\begin{proof} By the above definition of $\tilde{M}(\lambda/\mu)$ the top-left block matrix $\tilde{M}_{a\times a}=0.$ Therefore, \eqref{e:Pfit} is obtained by taking the $(a+1-j)$th row decomposition of ${\rm Pf}(\tilde{M}(\lambda/\mu))$ using \eqref{e:decomposition2}.
\end{proof}


Next we will give some necessary conditions for ${\rm Pf}(\tilde{M}(\lambda/\mu))\neq0.$
\begin{prop}\label{t:con1}
If there exists $0\leq r_0\leq s-1$ such that the cardinality $n_{r_0}(\mu)$ obeys
$n_{r_0}(\mu)+1<n_{r_0}(\la)$ or $n_{r_0}(\lambda)<n_{r_0}(\mu)$, then ${\rm Pf}(\tilde{M}(\lambda/\mu))=0.$
\end{prop}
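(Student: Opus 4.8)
The plan is to deduce the vanishing from \emph{singularity} of the antisymmetric matrix $\tilde{M}(\lambda/\mu)$: since $\operatorname{Pf}(\tilde{M}(\lambda/\mu))^{2}=\det(\tilde{M}(\lambda/\mu))$, it suffices to exhibit linearly dependent rows. Everything is driven by a residue count of the nonzero entries. I label the $a+b$ lines (rows/columns) of $\tilde{M}(\lambda/\mu)$ as $\mu$-lines (the first $a$, carried by the parts $\mu_{a-i+1}$) and $\lambda$-lines (the last $b$, carried by $\lambda_{j}$), and say a line has residue $r$ if its part is $\equiv r\ (\mathrm{mod}\ s)$. From the entries of $\tilde{M}_{a\times a},\tilde{M}_{a\times b},\tilde{M}_{b\times b}$ together with Lemma \ref{e:identity2} I record: (i) the $\mu$--$\mu$ block is zero; (ii) $f_{\lambda_{j}-\mu_{a-i+1}}\neq 0$ only if $\lambda_{j}\equiv\mu_{a-i+1}$, so a $\mu$-line of residue $r$ meets only $\lambda$-lines of the same residue $r$; (iii) $f_{(\lambda_{i},\lambda_{j})}\neq 0$ only if the two residues sum to $0$ and neither is $0$, i.e. they are $r$ and $s-r$ with $1\le r\le s-1$.

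The rigidity I will need in (iii) is that, for $r\neq 0$, the entry joining a $\lambda$-line of residue $r$ to a $\lambda$-line of residue $s-r$ equals the \emph{constant} $-4(-1)^{r}$, independent of the two parts and of their order: if the residue-$r$ part precedes the other the entry is $f_{(\lambda_{i},\lambda_{j})}=4(-1)^{s-r}$, while in the reverse order antisymmetry gives $-f_{(\lambda_{j},\lambda_{i})}=-4(-1)^{r}$, and these agree because $s$ is odd, so $(-1)^{s-r}=-(-1)^{r}$. Thus the entire submatrix cut out by the residue-$r$ and residue-$(s-r)$ $\lambda$-lines is constant. This is the place where oddness of $s$ enters.

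In the case $n_{r_{0}}(\lambda)<n_{r_{0}}(\mu)$ I would argue directly: by (i) and (ii) each of the $n_{r_{0}}(\mu)$ rows indexed by $\mu$-parts of residue $r_{0}$ is supported entirely on the $n_{r_{0}}(\lambda)$ columns indexed by $\lambda$-parts of residue $r_{0}$. These $n_{r_{0}}(\mu)$ rows therefore lie in a coordinate subspace of dimension $n_{r_{0}}(\lambda)<n_{r_{0}}(\mu)$ and are linearly dependent, so $\det(\tilde{M}(\lambda/\mu))=0$.

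For the case $n_{r_{0}}(\lambda)>n_{r_{0}}(\mu)+1$ I would exhibit a dependence, taking $r_{0}\neq 0$ first. Among the $P:=n_{r_{0}}(\lambda)$ rows indexed by $\lambda$-parts of residue $r_{0}$, each is supported by (ii)--(iii) on the $n_{r_{0}}(\mu)$ columns of $\mu$-parts of residue $r_{0}$, where its entries $w_{i}$ are arbitrary, together with the columns of $\lambda$-parts of residue $s-r_{0}$, where every entry is the constant $-4(-1)^{r_{0}}$. A relation $\sum_{i}a_{i}\,(\mathrm{row}_{i})=0$ then splits into $-4(-1)^{r_{0}}\sum_{i}a_{i}=0$ on each residue-$(s-r_{0})$ column and $\sum_{i}a_{i}w_{i}=0$ on the residue-$r_{0}$ $\mu$-columns, i.e. at most $n_{r_{0}}(\mu)+1$ homogeneous conditions on the $P$ unknowns $a_{i}$; since $P>n_{r_{0}}(\mu)+1$ a nontrivial solution exists and the rows are dependent. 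When $r_{0}=0$, fact (iii) produces no $\lambda$--$\lambda$ edges, so these rows are supported only on residue-$0$ $\mu$-columns, whose number is $n_{0}(\mu)$ plus at most one more coming from the zero possibly appended to $\mu$; this is exactly the bound $n_{0}(\mu)+1$, and $n_{0}(\lambda)>n_{0}(\mu)+1$ again forces dependence. In every case $\tilde{M}(\lambda/\mu)$ is singular and $\operatorname{Pf}(\tilde{M}(\lambda/\mu))=0$. The main obstacle is the constant-entry identity of the second paragraph—for $r_{0}\neq 0$ the vanishing is a genuine cancellation rather than a dimension count—and phrasing the ensuing dependence so that the single extra relation $\sum_{i}a_{i}=0$ accounts uniformly for the ``$+1$''.
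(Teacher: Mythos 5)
Your proof is correct, and it takes a genuinely different route from the paper. The paper proceeds by induction on $a+n_{r_0}(\lambda)$ (resp.\ on $n_{r_0}(\mu)$), peeling off a $\mu$-part of residue $r_0$ via the Laplace-type expansion of Lemma \ref{t:Pfit} and bottoming out in the observation that two $\lambda$-columns of equal residue become identical once no $\mu$-part of that residue remains. You instead give a one-shot, non-inductive singularity argument through $\operatorname{Pf}(\tilde{M}(\lambda/\mu))^{2}=\det(\tilde{M}(\lambda/\mu))$: the support analysis (a $\mu$-line of residue $r$ meets only $\lambda$-lines of residue $r$; $\lambda$--$\lambda$ entries are nonzero only between residues $r$ and $s-r$, $r\neq 0$) immediately disposes of the case $n_{r_0}(\lambda)<n_{r_0}(\mu)$ by a dimension count, and your key new ingredient — that the $\lambda$--$\lambda$ block between residues $r_0$ and $s-r_0$ is the \emph{constant} matrix with entries $-4(-1)^{r_0}$, which as you note checks out via Lemma \ref{e:identity2} precisely because $s$ is odd, so $(-1)^{s-r_0}=-(-1)^{r_0}$, and because $2r_0\not\equiv 0\pmod s$ kills the entries among the residue-$r_0$ lines themselves — collapses all residue-$(s-r_0)$ columns to the single condition $\sum_i a_i=0$, which is exactly where the ``$+1$'' in the hypothesis $n_{r_0}(\mu)+1<n_{r_0}(\lambda)$ comes from. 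You are also appropriately careful that the appended zero part contributes one extra admissible column only when $r_0=0$, matching the same bound. What each approach buys: yours is more elementary and structurally transparent about the source of the $+1$, whereas the paper's inductive mechanism via Lemma \ref{t:Pfit} is not wasted — it is reused verbatim in Propositions \ref{t:con4}, \ref{t:con2}, \ref{p:con3} and in the evaluation of Theorem \ref{t:iff}, so the paper gets those later statements almost for free from the same template.
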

\begin{proof}
If $n_{r_0}(\mu)+1<n_{r_0}(\la)$, we argue by induction on $a+n_{r_0}(\la)$. The initial step is $a+n_{r_0}(\la)=2,$ in this case $a=0$ and $n_{r_0}(\la)=2$. Suppose $\lambda_i\equiv\la_j\equiv r_0 ({\rm mod}~s)$. By Lemma \ref{e:identity2} the $i$th column and the $j$th column are equal in $\tilde{M}_{b\times b}(\lambda)$. Thus ${\rm Pf}(\tilde{M}(\lambda))=0$.

Now assume it holds for $<a+n_{r_0}(\la)$.
(i) If $n_{r_0}(\mu)=0$,
then by Lemma \ref{e:identity2}, the $(a+i)$th column is also identical to
the $(a+j)$th column in $\tilde{M}(\lambda/\mu)$. So ${\rm Pf}(\tilde{M}(\lambda/\mu))=0.$ (ii) If $n_{r_0}(\mu)>0$ and
$\mu_k\in N_{r_0}(\mu)$. It follows from Lemma \ref{t:Pfit} (taking $\mu_k$ as $\mu_j$ in the Lemma) and the inductive hypothesis that ${\rm Pf}(\tilde{M}(\lambda/\mu))=0$. The case for $n_{r_0}(\lambda)<n_{r_0}(\mu)$ can be proved similarly by induction on $n_{r_0}(\mu)$.
\end{proof}

\begin{prop}\label{t:con4} If $n_{0}(\lambda)\neq n_{0}(\mu)$, then ${\rm Pf}(\tilde{M}(\lambda/\mu))=0.$
\begin{proof}
By Proposition \ref{t:con1}, we have ${\rm Pf}(\tilde{M}(\lambda/\mu))=0$ unless $0\leq n_{0}(\lambda)-n_{0}(\mu)\leq 1$. So it is enough to consider the case of $n_{0}(\lambda)= n_{0}(\mu)+1$. We argue by induction on $n_{0}(\mu)$. If $n_{0}(\mu)=0$, then $n_{0}(\lambda)=1$. Suppose $\lambda_{i}\in N_{0}(\lambda)$, then by the definition of $f_{j}$ and Lemma \ref{e:identity2}, the elements in the $(a+i)$th column of $\tilde{M}(\la/\mu)$ are all zero. Thus, ${\rm Pf}(\tilde{M}(\lambda/\mu))=0.$ Assume it holds for $<n_{0}(\mu)$. Now for $\mu_k\in N_{0}(\mu)$, it follows from Lemma \ref{t:Pfit} (taking $\mu_k$ as $\mu_j$ in the Lemma) and the inductive hypothesis that ${\rm Pf}(\tilde{M}(\lambda/\mu))=0$.
\end{proof}
\end{prop}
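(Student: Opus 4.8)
The plan is to start from the necessary condition already proved in Proposition \ref{t:con1}, use it to trap $n_0(\lambda)-n_0(\mu)$ in a single remaining value, and then eliminate that value by an induction on $n_0(\mu)$ that uses the row-expansion formula of Lemma \ref{t:Pfit} to strip off one part of $\mu$ divisible by $s$ at each step. First I would apply Proposition \ref{t:con1} with $r_0=0$: it already gives ${\rm Pf}(\tilde{M}(\lambda/\mu))=0$ unless $0\leq n_0(\lambda)-n_0(\mu)\leq 1$. Since we assume $n_0(\lambda)\neq n_0(\mu)$, the only surviving case is $n_0(\lambda)=n_0(\mu)+1$, which the induction will handle.

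For the base case $n_0(\mu)=0$ (so no part of $\mu$, including a possibly appended $0$, is divisible by $s$, and $n_0(\lambda)=1$), let $\lambda_i$ be the unique part of $\lambda$ with $\lambda_i\equiv 0\pmod s$. I would show the whole $(a+i)$th column of $\tilde{M}(\lambda/\mu)$ vanishes. The entries coming from $\tilde{M}_{a\times b}$ are $f_{\lambda_i-\mu_{a-p+1}}$; since each $\mu_{a-p+1}$ is positive and $\not\equiv 0\pmod s$ while $\lambda_i\equiv 0\pmod s$, the difference $\lambda_i-\mu_{a-p+1}$ is not divisible by $s$, so these entries are $0$ by the support of $f_j$. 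The entries coming from $\tilde{M}_{b\times b}$ are $f_{(\lambda_q,\lambda_i)}$ for $q<i$ and $-f_{(\lambda_i,\lambda_q)}$ for $q>i$, with $\lambda_q\not\equiv 0\pmod s$: part (3) of Lemma \ref{e:identity2} kills the first (second argument $\equiv 0$, both positive), and part (1) kills the second (since $s\mid(\lambda_i+\lambda_q)$ would force $s\mid\lambda_q$). Thus the $(a+i)$th column—equivalently, by antisymmetry, the $(a+i)$th row—is identically zero, and expanding the Pfaffian along it via \eqref{e:decomposition2} yields ${\rm Pf}(\tilde{M}(\lambda/\mu))=0$.

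For the inductive step, assuming the claim for all smaller values of $n_0(\mu)$, I would choose any $\mu_k\in N_0(\mu)$ (which exists since $n_0(\mu)>0$) and apply Lemma \ref{t:Pfit} with $r=0$ and $j=k$:
\begin{align*}
{\rm Pf}(\tilde{M}(\lambda/\mu))=\sum_{\lambda_i\in N_0(\lambda),\,\lambda_i\geq\mu_k}(-1)^{i-k}f_{\lambda_i-\mu_k}\,{\rm Pf}(\tilde{M}(\lambda^{\hat{i}}/\mu^{\hat{k}})).
\end{align*}
In each summand $\lambda^{\hat{i}}$ drops one part $\equiv 0\pmod s$ and $\mu^{\hat{k}}$ drops one part $\equiv 0\pmod s$, so $n_0(\lambda^{\hat{i}})=n_0(\lambda)-1$ and $n_0(\mu^{\hat{k}})=n_0(\mu)-1$; the gap $n_0(\lambda^{\hat{i}})-n_0(\mu^{\hat{k}})=1$ is preserved while $n_0(\mu^{\hat{k}})<n_0(\mu)$. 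By the induction hypothesis every ${\rm Pf}(\tilde{M}(\lambda^{\hat{i}}/\mu^{\hat{k}}))$ vanishes, so the whole sum is $0$.

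I expect the main obstacle to be the bookkeeping in the base case rather than the induction itself. One must be careful that, under the convention making $n_0(\mu)=0$ exclude an appended zero, no entry of the chosen column survives; the delicate point is the $\tilde{M}_{b\times b}$ block, where the vanishing relies on the precise case analysis in parts (1) and (3) of Lemma \ref{e:identity2} for an argument $\equiv 0$ paired against an argument $\not\equiv 0\pmod s$, whereas the $\tilde{M}_{a\times b}$ entries vanish immediately from the support of $f_j$. Once the zero column is established, the rest is a routine Laplace expansion of the Pfaffian, and the inductive step is a direct application of Lemma \ref{t:Pfit}.
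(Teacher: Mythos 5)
Your proposal is correct and follows essentially the same route as the paper's own proof: reduce to $n_0(\lambda)=n_0(\mu)+1$ via Proposition \ref{t:con1}, induct on $n_0(\mu)$ with a vanishing $(a+i)$th column in the base case, and strip off $\mu_k\in N_0(\mu)$ via Lemma \ref{t:Pfit} in the inductive step. Your treatment is in fact slightly more careful than the paper's, since you verify explicitly (via the support of $f_j$ and parts (1) and (3) of Lemma \ref{e:identity2}) that every entry of the chosen column vanishes, including the point that the convention $n_0(\mu)=0$ rules out an appended zero part, which is exactly what prevents a surviving entry $f_{\lambda_i}=2$ in the $\tilde{M}_{a\times b}$ block.
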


\begin{lem}\label{t:con2'}
Suppose $a=0$. If there exists $1\leq r_0\leq s-1$ such that $n_{r_0}(\lambda)\neq n_{s-r_0}(\lambda)$, then ${\rm Pf}(\tilde{M}(\lambda))=0.$
\end{lem}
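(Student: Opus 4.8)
<br>

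The plan is to prove Lemma~\ref{t:con2'} by connecting the structure of the matrix $\tilde{M}(\lambda)$ (when $\mu=\emptyset$) to the combinatorial balance condition $n_{r_0}(\lambda)=n_{s-r_0}(\lambda)$, and then invoking the vanishing of the Pfaffian when this balance fails. First I would recall from Lemma~\ref{e:identity2}(1) that for $a=0$ the entries of $\tilde{M}(\lambda)=\tilde{M}_{b\times b}$ are given by $(\tilde{M}_{b\times b})_{i,j}=f_{(\lambda_i,\lambda_j)}$, which is nonzero only when $s\mid(\lambda_i+\lambda_j)$. In other words, writing $\lambda_i\equiv r_i\pmod s$, the $(i,j)$ entry vanishes unless $r_i+r_j\equiv 0\pmod s$. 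This means a part $\lambda_i$ with residue $r$ can only ``pair'' with parts of complementary residue $s-r$ (and parts with residue $0$ can pair only among themselves, but those have already been handled by Proposition~\ref{t:con4}).

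The key observation is that $\tilde{M}(\lambda)$ decomposes, after reordering rows and columns according to residue classes, into a block structure in which the residue-$r$ parts interact \emph{only} with the residue-$(s-r)$ parts. For a fixed pair $1\le r_0\le s-1$ with $r_0\neq s-r_0$, the submatrix coupling $N_{r_0}(\lambda)$ and $N_{s-r_0}(\lambda)$ is an $n_{r_0}(\lambda)\times n_{s-r_0}(\lambda)$ rectangular block (appearing antisymmetrically), and all other entries involving these rows/columns are zero. The plan is then to argue that when the two class sizes are unequal, too many rows are ``trapped'' with no available pairing partners, forcing the Pfaffian to vanish.

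The cleanest way to make this rigorous is by iterated row expansion via Lemma~\ref{t:Pfit} specialized to $\mu=\emptyset$ together with the Laplace expansion~\eqref{e:decomposition2}, or more directly by the determinantal identity $\mathrm{Pf}(\tilde M(\lambda))^2=\det(\tilde M(\lambda))$. I would expand $\mathrm{Pf}(\tilde M(\lambda))$ along a row indexed by a part in $N_{r_0}(\lambda)$: by Lemma~\ref{e:identity2}, the only nonzero entries in that row lie in columns indexed by $N_{s-r_0}(\lambda)$, so each term of the expansion removes one part from $N_{r_0}(\lambda)$ and one from $N_{s-r_0}(\lambda)$, reducing both cardinalities by one while preserving the imbalance $n_{r_0}-n_{s-r_0}$. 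Iterating, one reaches a configuration in which one of the two classes is exhausted while the other still contains parts; any surviving row indexed by the nonempty class then has all entries equal to zero, so the Pfaffian of that reduced matrix is $0$, and hence $\mathrm{Pf}(\tilde M(\lambda))=0$.

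The main obstacle I anticipate is bookkeeping the residue classes cleanly under repeated expansion, since each step of~\eqref{e:decomposition2} introduces signs $(-1)^{i+j}$ and the surviving submatrices $\tilde M(\lambda^{\hat i\hat j})$ must be verified to retain exactly the same residue-class coupling structure (so that the inductive hypothesis applies). I would therefore set up the induction on $n_{r_0}(\lambda)+n_{s-r_0}(\lambda)$, with the base case being a single unpaired row (all-zero, hence Pfaffian zero, when the other class is empty), and carefully check that deleting a matched pair from $N_{r_0}$ and $N_{s-r_0}$ leaves a strict partition whose residue statistics differ from $\lambda$'s only by decrementing $n_{r_0}$ and $n_{s-r_0}$ each by one. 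Once the coupling structure and the invariance of the imbalance are established, the vanishing follows immediately from the base case.
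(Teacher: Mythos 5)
Your proposal is correct and is essentially the paper's own argument: the paper likewise expands ${\rm Pf}(\tilde M(\lambda))$ along a row indexed by a part $\lambda_i\in N_{r_0}(\lambda)$ via \eqref{e:decomposition2}, observes by Lemma \ref{e:identity2} that only columns with $s\mid(\lambda_i+\lambda_j)$ (i.e.\ residue $s-r_0$) survive, and inducts on $n_{r_0}(\lambda)$, each step deleting one part from each of $N_{r_0}$ and $N_{s-r_0}$ so the imbalance persists until an all-zero row forces the Pfaffian to vanish. Your choice of inducting on $n_{r_0}(\lambda)+n_{s-r_0}(\lambda)$ rather than $n_{r_0}(\lambda)$ is an immaterial variation, and since $s$ is odd, $r_0\neq s-r_0$ automatically, so the pairing structure you describe is exactly as in the paper.
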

\begin{proof}
Without loss of generality, we assume $n_{r_0}(\lambda)> n_{s-r_0}(\lambda)$. The lemma is proved by induction on $n_{r_0}(\la)$. Just note that for any fixed $i$ the following iterative formula holds:
\begin{align*}
{\rm Pf}(\tilde{M}(\lambda))=\sum_{s\mid (\la_i+\la_j)}(-1)^{i+j-1}:f_{(\la_i,\la_j)}:
{\rm Pf}(\tilde{M}(\lambda^{\hat{i},\hat{j}}))
\end{align*}
where $\lambda^{\hat{i},\hat{j}}=(,\cdots,\lambda_{i-1},\lambda_{i+1},\cdots,\lambda_{j-1},\lambda_{j+1},\cdots).$
\end{proof}
More generally, we have:
\begin{prop}\label{t:con2}
If there exists $1\leq r_0\leq s-1$ such that $n_{r_0}(\lambda)-n_{r_0}(\mu)\neq n_{s-r_0}(\lambda)-n_{s-r_0}(\mu)$, then ${\rm Pf}(\tilde{M}(\lambda/\mu))=0.$
\end{prop}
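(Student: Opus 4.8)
The plan is to reduce to the already established case $\mu=\emptyset$ of Lemma~\ref{t:con2'} by inducting on $a$, the size of the $\mu$-block of $\tilde M(\lambda/\mu)$, and propagating that result through the Laplace-type expansion of Lemma~\ref{t:Pfit}. The conceptual reason the vanishing holds is visible already from the matching expansion of the Pfaffian: the only nonzero entries of $\tilde M(\lambda/\mu)$ are $f_{\lambda_j-\mu_{a-i+1}}$, pairing a $\mu$-index with a $\lambda$-index of the \emph{same} residue mod $s$, and $f_{(\lambda_i,\lambda_j)}$, pairing two $\lambda$-indices with $s\mid(\lambda_i+\lambda_j)$, i.e.\ of \emph{complementary} residues $r$ and $s-r$; the $\mu$--$\mu$ block is zero by construction. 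Since $s$ is odd, for $1\le r_0\le s-1$ we always have $r_0\neq s-r_0$, so inside the $\lambda$-block a residue-$r_0$ part can only be matched to a residue-$(s-r_0)$ part. In any surviving perfect matching every $\mu$-part is first absorbed by a same-residue $\lambda$-part, after which the leftover residue-$r_0$ and residue-$(s-r_0)$ $\lambda$-parts must pair off bijectively; this forces $n_{r_0}(\lambda)-n_{r_0}(\mu)=n_{s-r_0}(\lambda)-n_{s-r_0}(\mu)$. When this balance fails no nonzero matching exists and the Pfaffian vanishes.

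To carry this out formally I would argue by induction on $a$. The base case $a=0$ is exactly Lemma~\ref{t:con2'}: here $n_r(\mu)=0$ for all $r$, so the hypothesis reduces to $n_{r_0}(\lambda)\neq n_{s-r_0}(\lambda)$. For the inductive step with $a>0$, pick any part $\mu_j$, say with $\mu_j\in N_r(\mu)$, and apply~\eqref{e:Pfit}:
\begin{align*}
{\rm Pf}(\tilde M(\lambda/\mu))=\sum_{\lambda_i\in N_r(\lambda),\,\lambda_i\ge\mu_j}(-1)^{i-j}f_{\lambda_i-\mu_j}\,{\rm Pf}(\tilde M(\lambda^{\hat i}/\mu^{\hat j})).
\end{align*}
Every surviving summand deletes a part $\lambda_i$ of residue $r$ from $\lambda$ (since $f_{\lambda_i-\mu_j}\neq0$ forces $\lambda_i\equiv\mu_j\pmod s$) together with the part $\mu_j$ of residue $r$ from $\mu$. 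Thus $n_r$ drops by one in both $\lambda$ and $\mu$ while all other residue counts are unchanged, so every difference $n_{r'}(\lambda)-n_{r'}(\mu)$ is preserved; in particular the defect at $r_0$ persists in $\lambda^{\hat i}/\mu^{\hat j}$. Because $\mu^{\hat j}$ contributes a block of size $a-1$ and $a+b$ drops by two (staying even), the inductive hypothesis yields ${\rm Pf}(\tilde M(\lambda^{\hat i}/\mu^{\hat j}))=0$ for each summand, whence ${\rm Pf}(\tilde M(\lambda/\mu))=0$.

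The step needing the most care is the residue bookkeeping that makes the induction go through, namely the observation that Lemma~\ref{t:Pfit} always removes parts of \emph{equal} residue from $\lambda$ and $\mu$ simultaneously, so the defining inequality $n_{r_0}(\lambda)-n_{r_0}(\mu)\neq n_{s-r_0}(\lambda)-n_{s-r_0}(\mu)$ is invariant under the expansion. One must also keep the appended-zero convention consistent: when $l(\mu)+l(\lambda)$ is odd the extra zero part has residue $0$ and hence never affects $n_{r_0}$ or $n_{s-r_0}$ for $1\le r_0\le s-1$, so whether it is retained or excised during the iteration it does not disturb the defect, and the parity $a+b\equiv0$ is maintained throughout. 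Apart from this accounting the proof is routine, since it merely transports the $\mu=\emptyset$ conclusion of Lemma~\ref{t:con2'} along the iterative formula.
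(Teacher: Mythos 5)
Your proof is correct, and it shares the paper's overall skeleton: induction on $a$ anchored at Lemma~\ref{t:con2'}, with the inductive step driven by the expansion of Lemma~\ref{t:Pfit}. But your inductive step is organized genuinely differently, and more cleanly, than the paper's. The paper first invokes Proposition~\ref{t:con1} to narrow to the range $0\leq n_{r_0}(\lambda)-n_{r_0}(\mu),\,n_{s-r_0}(\lambda)-n_{s-r_0}(\mu)\leq 1$, then splits into the cases $n_{r_0}(\mu)=0$ and $n_{r_0}(\mu)>0$; the first case requires a further sub-induction on $p=n_{s-r_0}(\mu)$ together with a separate zero-column argument (via Lemma~\ref{e:identity2}) when $p=0$, and only in the second case does it expand on a part $\mu_k\in N_{r_0}(\mu)$. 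Your key observation --- that $f_{\lambda_i-\mu_j}\neq 0$ forces $\lambda_i\equiv\mu_j \pmod{s}$, so one application of \eqref{e:Pfit} on an \emph{arbitrary} part of $\mu$ deletes equal-residue parts from $\lambda$ and $\mu$ simultaneously and hence preserves every difference $n_{r'}(\lambda)-n_{r'}(\mu)$ --- makes the defect at $r_0$ an invariant of the expansion and renders the inductive step uniform and case-free; it also makes the proof independent of Proposition~\ref{t:con1} entirely (the empty-sum situation, where no $\lambda_i\in N_r(\lambda)$ with $\lambda_i\geq\mu_j$ exists, is handled vacuously). Your bookkeeping for the appended zero part is also sound, since a zero part has residue $0$ and cannot disturb $n_{r_0}$ or $n_{s-r_0}$ for $1\leq r_0\leq s-1$, while $a+b$ drops by two at each step and stays even. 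What your route buys is brevity and a self-contained statement-level invariance principle; what the paper's route buys is an explicit structural picture in the boundary case (a literally zero column when $n_{r_0}(\mu)=n_{s-r_0}(\mu)=0$), at the cost of a longer case analysis.
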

\begin{proof} Without loss of generality, we assume $n_{r_0}(\lambda)-n_{r_0}(\mu)> n_{s-r_0}(\lambda)-n_{s-r_0}(\mu)$. We prove the proposition
by induction on $a$. The initial step is Lemma \ref{t:con2'}.

Now assume it holds for $<a$. By Proposition \ref{t:con1}, we have ${\rm Pf}(\tilde{M}(\lambda/\mu))=0$ unless $0\leq n_{r_0}(\lambda)-n_{r_0}(\mu)\leq1$ and $0\leq n_{s-r_0}(\lambda)-n_{s-r_0}(\mu)\leq 1$. So it is enough to consider the case of $1\geq n_{r_0}(\lambda)-n_{r_0}(\mu)>n_{s-r_0}(\lambda)-n_{s-r_0}(\mu)\geq0$. (i) If $n_{r_0}(\mu)=0$, then $n_{r_0}(\lambda)=1$, $n_{s-r_0}(\la)=n_{s-r_0}(\mu)$. Suppose $\la_i\in N_{r_0}(\la)$ and $n_{s-r_0}(\la)=n_{s-r_0}(\mu)=p\geq0$. If $p=0$, then by Lemma \ref{e:identity2}, the elements in the $(a+i)$th column in $\tilde{M}(\la/\mu)$ are all zero, so ${\rm Pf}(\tilde{M}(\lambda/\mu))=0$. If $p>0$, this implies $N_{s-r_0}(\mu)\neq\emptyset$. Suppose $\mu_j\in N_{s-r_0}(\mu)$. Then we can use induction on $p$, by Lemma \ref{t:Pfit} and the inductive assumption we have ${\rm Pf}(\tilde{M}(\lambda/\mu))=0$. (ii) If $n_{r_0}(\mu)>0$ and $\mu_k\in N_{r_0}(\mu)$. It follows from Lemma \ref{t:Pfit} (regarding $\mu_k$ as $\mu_j$ in the Lemma) and the inductive hypothesis that ${\rm Pf}(\tilde{M}(\lambda/\mu))=0.$
\end{proof}

\begin{lem}\label{l:con3}
Suppose $0\leq r_0\leq s-1$ and $N_{r_0}(\mu)\neq\emptyset$, if $\mu_{r_0,1}>\la_{r_0,1}$ or $\la_{r_0,2}>\mu_{r_0,1}$, then $\operatorname{Pf}(\tilde{M}(\lambda/\mu))=0$.
\end{lem}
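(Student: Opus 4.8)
The plan is to split into the two cases $\mu_{r_0,1}>\lambda_{r_0,1}$ and $\lambda_{r_0,2}>\mu_{r_0,1}$ and, in each, exhibit an elementary degeneracy of the matrix $\tilde M(\lambda/\mu)$ that forces its Pfaffian to vanish. Throughout I will use only the explicit entries recorded just before Theorem~\ref{t:Pf} (namely $\tilde M_{a\times a}=0$, $(\tilde M_{a\times b})_{i,j}=f_{\lambda_j-\mu_{a-i+1}}$, $(\tilde M_{b\times b})_{i,j}=f_{(\lambda_i,\lambda_j)}$), the evaluation in Lemma~\ref{e:identity2}, and the fact that for an antisymmetric matrix $A$ one has $\operatorname{Pf}(A)^2=\det A$, so a zero row (equivalently a zero column) or a pair of identical rows already yields $\operatorname{Pf}(A)=0$.

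For the case $\mu_{r_0,1}>\lambda_{r_0,1}$, I would examine the row of $\tilde M(\lambda/\mu)$ indexed by the $\mu$-part $\mu_{r_0,1}$. Its entries against other $\mu$-parts vanish since $\tilde M_{a\times a}=0$, and its entry against a $\lambda$-part $\lambda_j$ is $f_{\lambda_j-\mu_{r_0,1}}$, which is nonzero only when $\lambda_j\equiv r_0\ (\mathrm{mod}\ s)$ and $\lambda_j\geq\mu_{r_0,1}$. But every such $\lambda_j$ satisfies $\lambda_j\leq\lambda_{r_0,1}<\mu_{r_0,1}$, so all these entries vanish as well. Hence the entire $\mu_{r_0,1}$-row is zero and $\operatorname{Pf}(\tilde M(\lambda/\mu))=0$; this is also the empty-sum specialization of the iteration in Lemma~\ref{t:Pfit} applied to $\mu_{r_0,1}$.

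For the case $\lambda_{r_0,2}>\mu_{r_0,1}$, which I expect to be the main obstacle, the plan is to compare the two rows of $\tilde M(\lambda/\mu)$ indexed by $\lambda_{r_0,1}$ and $\lambda_{r_0,2}$ and show they are literally equal. In the columns indexed by $\mu$-parts the entries are $f_{\lambda_{r_0,1}-\mu}$ and $f_{\lambda_{r_0,2}-\mu}$ (carried by a common sign from the $-\tilde M_{a\times b}^{T}$ block); since $\lambda_{r_0,1}>\lambda_{r_0,2}>\mu_{r_0,1}\geq\mu$ for every $\mu\in N_{r_0}(\mu)$, both equal $2$ there and both equal $0$ for $\mu\notin N_{r_0}(\mu)$. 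In the columns indexed by $\lambda$-parts $\lambda_j$ the entries are $f_{(\lambda_{r_0,1},\lambda_j)}$ and $f_{(\lambda_{r_0,2},\lambda_j)}$; by Lemma~\ref{e:identity2}(1) these depend only on residues, are nonzero exactly when $\lambda_j\equiv s-r_0\ (\mathrm{mod}\ s)$, and then both equal $4(-1)^{\,s-r_0}$, independent of magnitudes. The two potentially troublesome columns $\lambda_{r_0,1},\lambda_{r_0,2}$ themselves give matching zeros: the diagonal entries vanish, while $f_{(\lambda_{r_0,2},\lambda_{r_0,1})}=f_{(\lambda_{r_0,1},\lambda_{r_0,2})}=0$ because $2r_0\not\equiv0\ (\mathrm{mod}\ s)$ when $r_0\neq0$ (as $s$ is odd) and by part~(3) of Lemma~\ref{e:identity2} when $r_0=0$. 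Thus the two rows coincide, so $\det\tilde M(\lambda/\mu)=0$ and $\operatorname{Pf}(\tilde M(\lambda/\mu))=0$.

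The crux is the second case, and the two points to verify carefully are exactly the two \emph{uniformities} above: that $f_{(m,n)}$ for two positive $\lambda$-parts is insensitive to their sizes and only sees the residue class, and that the strict inequalities $\lambda_{r_0,1},\lambda_{r_0,2}>\mu_{r_0,1}$ upgrade every relevant $\mu$-entry from the borderline value $1$ (the $\lambda=\mu$ case of $f$) to the uniform value $2$. Oddness of $s$ is used both to guarantee $r_0\not\equiv s-r_0$ and to force the same-residue $\lambda$--$\lambda$ entries to vanish, which is what keeps the two rows from differing on the columns of $\lambda_{r_0,1}$ and $\lambda_{r_0,2}$; once these are in place the equality of rows is immediate and the vanishing follows from the elementary Pfaffian facts recalled at the outset.
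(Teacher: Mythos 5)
Your proposal is correct and follows essentially the same route as the paper's own proof: in the first case the row of $\tilde M(\lambda/\mu)$ indexed by $\mu_{r_0,1}$ is identically zero, and in the second the rows (equivalently columns) indexed by $\lambda_{r_0,1}$ and $\lambda_{r_0,2}$ coincide by Lemma \ref{e:identity2}, so the Pfaffian vanishes. One harmless imprecision: when $r_0=0$ the $\lambda$-column entries of those two rows are not ``nonzero exactly when $\lambda_j\equiv s-r_0 \pmod{s}$'' but are all zero by Lemma \ref{e:identity2}(3), which still leaves the two rows equal, as your explicit appeal to part (3) for the columns of $\lambda_{r_0,1},\lambda_{r_0,2}$ already indicates.
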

\begin{proof} Suppose $\mu_{r_0,1}=\mu_{k}$. If $\mu_{r_0,1}>\la_{r_0,1}$, then the $(a+1-k)$th row of $\tilde{M}(\lambda/\mu)$ is equal to $0$. Thus, $\operatorname{Pf}(\tilde{M}(\lambda/\mu))=0$. Now consider the case $\la_{r_0,2}>\mu_{r_0,1}$. Suppose $\la_{r_0,1}$ ($\la_{r_0,2}$) occupies the $i$th ($j$th) entry in $\la$.  Lemma \ref{e:identity2} implies that the $(a+i)$th column coincides with the $(a+j)$th column
in $\tilde{M}(\lambda/\mu)$. So ${\rm Pf}(\tilde{M}(\lambda/\mu))=0.$
\end{proof}

\begin{prop}\label{p:con3}
Suppose $0\leq r_0\leq s-1$ and $N_{r_0}(\mu)\neq\emptyset$. If there exists $j_0$ such that $\mu_{r_0,j_0}>\la_{r_0,j_0}$ or $\la_{r_0,j_0+1}>\mu_{r_0,j_0}$, then $\operatorname{Pf}(\tilde{M}(\lambda/\mu))=0$.
\end{prop}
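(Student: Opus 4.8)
The plan is to prove the statement by induction on $j_0$, viewing it as the natural strengthening of Lemma \ref{l:con3}: the vanishing results established so far already force, within each residue class, the single interlacing $\la_{r_0,1}\geq\mu_{r_0,1}\geq\la_{r_0,2}$, and Proposition \ref{p:con3} is exactly the contrapositive of the assertion that nonvanishing of the Pfaffian forces the full chain of interlacings $\la_{r_0,j}\geq\mu_{r_0,j}\geq\la_{r_0,j+1}$ for every $j$. The base case $j_0=1$ is precisely Lemma \ref{l:con3}. So I would fix $j_0\geq 2$ and assume the proposition holds for every smaller index, for all admissible pairs.

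First I would dispose of the easy alternative: if the interlacing already fails at some position $j'<j_0$, that is $\mu_{r_0,j'}>\la_{r_0,j'}$ or $\la_{r_0,j'+1}>\mu_{r_0,j'}$, then the inductive hypothesis applied at $j'$ gives ${\rm Pf}(\tilde{M}(\la/\mu))=0$ at once. Hence I may assume $\la_{r_0,j'}\geq\mu_{r_0,j'}\geq\la_{r_0,j'+1}$ for all $1\leq j'<j_0$; in particular $\la_{r_0,1}\geq\mu_{r_0,1}\geq\la_{r_0,2}$.

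Now I expand along the largest part of $\mu$ in class $r_0$. Taking $\mu_j=\mu_{r_0,1}$ in Lemma \ref{t:Pfit} gives
\begin{align*}
{\rm Pf}(\tilde{M}(\la/\mu))=\sum_{\la_i\in N_{r_0}(\la),\,\la_i\geq\mu_{r_0,1}}(-1)^{i-j}f_{\la_i-\mu_{r_0,1}}\,{\rm Pf}(\tilde{M}(\la^{\hat{i}}/\mu^{\hat{j}})).
\end{align*}
Since $\la_{r_0,1}\geq\mu_{r_0,1}\geq\la_{r_0,2}>\la_{r_0,3}>\cdots$, the constraint $\la_i\geq\mu_{r_0,1}$ leaves only $\la_i=\la_{r_0,1}$, together with $\la_i=\la_{r_0,2}$ in the boundary case $\mu_{r_0,1}=\la_{r_0,2}$. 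For the principal term (removing $\la_{r_0,1}$ and $\mu_{r_0,1}$) the class-$r_0$ parts of both partitions shift down by one index, $(\mu^{\hat{j}})_{r_0,t}=\mu_{r_0,t+1}$ and $(\la^{\hat{i}})_{r_0,t}=\la_{r_0,t+1}$, so the hypothesis $\mu_{r_0,j_0}>\la_{r_0,j_0}$ or $\la_{r_0,j_0+1}>\mu_{r_0,j_0}$ becomes the failure of interlacing at position $j_0-1$ for the pair $(\la^{\hat{i}},\mu^{\hat{j}})$, and the inductive hypothesis kills this term. For the boundary term (removing $\la_{r_0,2}$) one notes that $\mu_{r_0,j_0}>\la_{r_0,j_0}$ is impossible when $j_0=2$, as it would give $\mu_{r_0,2}>\la_{r_0,2}=\mu_{r_0,1}$, and a parallel index count shows the surviving inequality again produces the failure of interlacing at position $j_0-1$, so this term too vanishes by the inductive hypothesis (its base instance being Lemma \ref{l:con3}). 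Hence every term is zero and ${\rm Pf}(\tilde{M}(\la/\mu))=0$.

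The routine part is the index bookkeeping under $\la\mapsto\la^{\hat{i}}$ and $\mu\mapsto\mu^{\hat{j}}$, which merely relabels the class-$r_0$ parts. The only genuinely delicate point, and the step I expect to be the main obstacle, is the boundary case $\mu_{r_0,1}=\la_{r_0,2}$, where a second term survives in the Laplace expansion. The key to controlling it is that reducing to the situation where interlacing holds at all positions below $j_0$ forces $\mu_{r_0,2}<\mu_{r_0,1}=\la_{r_0,2}$, which rules out the ``upper'' half of the bad condition and leaves only the ``lower'' half $\la_{r_0,j_0+1}>\mu_{r_0,j_0}$, an inequality that is preserved under the removal; this is exactly what lets the inductive hypothesis (or Lemma \ref{l:con3} when $j_0=2$) apply to the boundary term as well.
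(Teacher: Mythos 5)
Your overall strategy is the same as the paper's: induction on $j_0$ with base case Lemma \ref{l:con3}, reduction to the situation where interlacing holds at all positions below $j_0$, and then a one-row Laplace expansion via Lemma \ref{t:Pfit} whose surviving terms are killed by the inductive hypothesis after the index shift. The one organizational difference is your choice of pivot: you always expand along $\mu_{r_0,1}$, so that only the terms $\la_{r_0,1}$ (and $\la_{r_0,2}$ in the boundary case $\mu_{r_0,1}=\la_{r_0,2}$) survive, whereas the paper splits into two cases, expanding along $\mu_{r_0,m}$ when $\mu_{r_0,m}>\la_{r_0,m}$ (producing $m-1$ terms, each vanishing by induction) and along $\mu_{r_0,1}$ when $\la_{r_0,m+1}>\mu_{r_0,m}$. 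Your unified pivot is slightly cleaner, and the index bookkeeping you describe for the principal term is correct: removing $\la_{r_0,1}$ and $\mu_{r_0,1}$ shifts both class-$r_0$ chains down by one, turning either half of the failure at $j_0$ into the corresponding failure at $j_0-1$.

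There is, however, one incorrect claim in your discussion of the boundary term. You assert that interlacing below $j_0$ together with $\mu_{r_0,2}<\mu_{r_0,1}=\la_{r_0,2}$ ``rules out the upper half of the bad condition and leaves only the lower half $\la_{r_0,j_0+1}>\mu_{r_0,j_0}$.'' This is true only for $j_0=2$. For $j_0\geq 3$ the upper half can perfectly well occur in the boundary case: take $s=7$ and class-$r_0$ chains $\la_{r_0,1}=34>\la_{r_0,2}=27>\la_{r_0,3}=6$ and $\mu_{r_0,1}=27>\mu_{r_0,2}=20>\mu_{r_0,3}=13$; then $\mu_{r_0,1}=\la_{r_0,2}$, interlacing holds at positions $1$ and $2$, $\mu_{r_0,2}<\la_{r_0,2}$, and yet $\mu_{r_0,3}=13>6=\la_{r_0,3}$. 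Fortunately this does not break your proof: under the boundary removal (deleting $\la_{r_0,2}$ and $\mu_{r_0,1}$) one has $(\mu^{\hat{1}})_{r_0,j_0-1}=\mu_{r_0,j_0}$ and $(\la^{\hat{i}})_{r_0,j_0-1}=\la_{r_0,j_0}$ for $j_0\geq 3$, so the upper-half failure at $j_0$ also becomes an upper-half failure at $j_0-1$, exactly as your ``parallel index count'' sentence already says. So the argument closes in all cases; you should simply delete the claim that the upper half is ruled out and let the index shift handle both halves uniformly, with the $j_0=2$ degeneracy ($\mu_{r_0,2}>\la_{r_0,2}=\mu_{r_0,1}$ contradicting strictness) noted as the only case where the upper half genuinely cannot occur.
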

\begin{proof}
We use induction on $j_0$. When $j_0=1$, it is Lemma \ref{l:con3}. Suppose it holds for any $j_0<m$. Then $\operatorname{Pf}(\tilde{M}(\lambda/\mu))=0$ unless $\la_{r_0,1}\geq\mu_{r_0,1}\geq\cdots\geq\la_{r_0,m-1}\geq\mu_{r_0,m-1}\geq\la_{r_0,m}$. If $\mu_{r_0,m}>\la_{r_0,m}$(resp. $\la_{r_0,m+1}>\mu_{r_0,m}$), then by Lemma \ref{t:Pfit} (regarding $\mu_{b(m)}$ as $\mu_j$)
\begin{align*}
\operatorname{Pf}(\tilde{M}(\lambda/\mu))=\sum_{i=1}^{m-1}(-1)^{a(i)-b(m)}f_{\la_{a(i)}-\mu_{b(m)}}\operatorname{Pf}(\tilde{M}((\la_1,\cdots,\hat{\la}_{a(i)},\cdots)/\mu^{\hat{b}(m)}))
\end{align*}
resp. (regarding $\mu_{b(1)}$ as $\mu_j$)
\begin{align*}
\operatorname{Pf}(\tilde{M}(\lambda/\mu))&=(-1)^{a(1)-b(1)}f_{\la_{r_0,1}-\mu_{r_0,1}}\operatorname{Pf}(\tilde{M}((\la_1,\cdots,\hat{\la}_{a(1)},\cdots)/
\mu^{\hat{b}(1)}))\\
&+(-1)^{a(2)-b(1)}f_{\la_{r_0,2}-\mu_{r_0,1}}\operatorname{Pf}(\tilde{M}((\la_1,\cdots,\hat{\la}_{a(2)},\cdots)/
\mu^{\hat{b}(1)}))
\end{align*}
where $a(i)$(resp. $b(i)$) denotes the entry occupied by $\la_{r_0,i}$(resp. $\mu_{r_0,i}$) and $(\la_1,\cdots,\hat{\la}_{a(i)},\cdots)$ means omitting the part $\la_{a(i)}$ in $\la$. By the inductive hypothesis, each summand above is zero. So $\operatorname{Pf}(\tilde{M}(\lambda/\mu))=0$.
\end{proof}

Let $\mathcal{SP}_{s,k}$ be the set of strict partition pairs $(\lambda,\mu)$ such that $\mu\subset\lambda\vdash|\mu|+sk$, $l(\lambda)+l(\mu)$ is even, and
in addition \\
$(\rmnum{1})$  $n_{0}(\lambda)=n_{0}(\mu)$ and $0\leq n_{r}(\lambda)-n_{r}(\mu)=n_{s-r}(\lambda)-n_{s-r}(\mu)\leq1$ for any $0< r\leq s-1$;\\
$(\rmnum{2})$ For any $N_{r}(\mu)\neq\emptyset$ and $1\leq j_0\leq n_r(\mu)$, $\la_{r,j_0}\geq\mu_{r,j_0}\geq\la_{r,j_0+1}$.\\

Let $\lambda, \mu$ be strict partitions such that $\mu\subset\lambda$, a skew diagram $\lambda/\mu$ is called a vertical (resp. horizontal) {\it strip} if each row (resp. column) contains at most one box \cite[Ch. I, p.5]{Mac}. If $\lambda/\mu$ is a horizontal strip, denote by $a(\lambda/\mu)$ the {\it $a$-number} of the skew diagram $\lambda/\mu$, which is the number of integers $i\geq1$ such that $\lambda/\mu$ has a box in the $i$th column but not in the $(i+1)$th column. The Pieri rule for Schur's $Q$-functions states that \cite[Ch. III, p.255]{Mac}
\begin{align}\label{e:Pieri}
q_kQ_{\mu}=\sum\limits_{\lambda}2^{a(\lambda/\mu)}2^{l(\mu)-l(\lambda)}Q_{\lambda}
\end{align}
where the sum runs over all strict partitions $\lambda\supset\mu$ such that $\lambda/\mu$ is a horizontal $k$-strip.

We remark that $\mathcal{SP}_{s,k}$ can be equivalently characterized as follows.
A pair $(\la,\mu)\in \mathcal{SP}_{s,k}$ if and only if
the following symmetry and non-intersecting properties hold\\
$(\rmnum{1}^{'})$  $n_{0}(\lambda)=n_{0}(\mu)$ and $0\leq n_{r}(\lambda)-n_{r}(\mu)=n_{s-r}(\lambda)-n_{s-r}(\mu)\leq1$ for any $0< r\leq s-1$;\\
$(\rmnum{2}^{'})$  $N_{r}(\la)_{>}/N_{r}(\mu)_{>}$ is a horizontal strip  for any $0\leq r\leq s-1$.\\

For $(\la,\mu)\in \mathcal{SP}_{s,k}$, we define the {\it normalized $a$-number} of skew diagram $\lambda/\mu$
\begin{equation}\label{e:A}
A(\la/\mu)=\sum_{r=0}^{s-1}a(N_{r}(\la)_{>}/N_{r}(\mu)_{>}).
\end{equation}
Note that if $(\lambda, \mu)\in \mathcal{SP}_{s,k}$ then it can be regarded as a non-intersecting union of horizontal strips.
So we can call $\la/\mu$ a symmetric horizontal $(s,k)$-strip if $(\la,\mu)\in \mathcal{SP}_{s,k}$.

Propositions \ref{t:con1}, \ref{t:con4}, \ref{t:con2} and \ref{p:con3} imply that ${\rm Pf}(\tilde{M}(\lambda/\mu))=0$ unless $(\la,\mu)\in\mathcal{SP}_{s,k}.$ Next we prove that $(\la,\mu)\in\mathcal{SP}_{s,k}$ is also a sufficient condition for ${\rm Pf}(\tilde{M}(\lambda/\mu))\neq0$. Moreover we will give
a generalization of \eqref{e:Pieri}:
\begin{align}
(p_s\circ q_k)Q_{\mu}=\sum\limits_{\lambda}\epsilon(\la/\mu)2^{A(\lambda/\mu)}2^{l(\mu)-l(\lambda)}Q_{\lambda}
\end{align}
summed over all strict partitions $\lambda\supset\mu$ such that $\lambda/\mu$ is a symmetric horizontal $(s,k)$-strip. Here $\epsilon(\la/\mu)=\pm1$.

Let us begin by introducing two operations on matrices. Let $B=(b_{ij})_{m\times m}$ be a matrix, we define \\
$(\rmnum{1})$ The column rotation $C_{p}^{q}$ of matrix $B$: columns $p, p+1, \ldots, q-1, q$ are cyclically rotated to columns $q, p, p+1, \ldots, q-1$.\\
$(\rmnum{2})$ The row rotation $R_{p}^q$ of matrix $B$: rows $p, p+1, \ldots, q-1, q$ are cyclically rotated to rows $q, p, p+1, \ldots, q-1$.

For example, suppose $B=(b_{ij})_{4\times 4}$, then
\begin{align*}
C_{1}^3(B)=\left(\begin{array}{cccc}
b_{12}&b_{13}&b_{11}&b_{14}\\
b_{22}&b_{23}&b_{21}&b_{24}\\
b_{32}&b_{33}&b_{31}&b_{34}\\
b_{42}&b_{43}&b_{41}&b_{44}\\
\end{array}\right),~~~
R_{1}^3(B)=\left(\begin{array}{cccc}
b_{21}&b_{22}&b_{23}&b_{24}\\
b_{31}&b_{32}&b_{33}&b_{34}\\
b_{11}&b_{12}&b_{13}&b_{14}\\
b_{41}&b_{42}&b_{43}&b_{44}\\
\end{array}\right).
\end{align*}
Clearly the two operators commute: $C^{i}_{j}R_{p}^{q}=R^{q}_{p}C^{i}_{j}$. If $B$ is an antisymmetric matrix, then $R^{q}_{p}C^{q}_{p}(B)$ is also an antisymmetric matrix and ${\rm Pf}(R^{q}_{p}C^{q}_{p}(B))=(-1)^{q-p}{\rm Pf}(B)$.
\begin{lem}\label{l:RC}
Let $0\leq r_0\leq s-1$, $\mu_1\in N_{r_0}(\mu)$, $\la_{r_0,1}>\la_{r_0,2}=\mu_{r_0,1}$, $\la_{r_0,1}$(resp. $\la_{r_0,2}$) occupies the $i$-th(resp. $j$-th) entry in $\la$. Then ${\rm Pf}(\tilde{M}(\la^{\hat{i}}/\mu^{\hat{1}}))=(-1)^{j-i-1}{\rm Pf}(\tilde{M}(\la^{\hat{j}}/\mu^{\hat{1}}))$.
\end{lem}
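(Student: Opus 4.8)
The plan is to realize the two antisymmetric matrices $\tilde{M}(\la^{\hat{i}}/\mu^{\hat{1}})$ and $\tilde{M}(\la^{\hat{j}}/\mu^{\hat{1}})$ as rearrangements of one another and then invoke the row–column rotation identity recorded just above. Writing $v=\la_{r_0,2}=\mu_1$, both matrices are assembled from the \emph{same} common data: the $\mu$-parts $\mu_2,\ldots,\mu_a$ and the $\la$-parts $\{\la_1,\ldots,\la_b\}\setminus\{\la_{r_0,1},\la_{r_0,2}\}$. The only discrepancy is that $\tilde{M}(\la^{\hat{i}}/\mu^{\hat{1}})$ carries one extra $\la$-line of value $\la_{r_0,2}=v$, while $\tilde{M}(\la^{\hat{j}}/\mu^{\hat{1}})$ carries instead one extra $\la$-line of value $\la_{r_0,1}$. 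First I would show that these two distinguished lines have \emph{identical} entries against every common row and column; it then follows that the two matrices differ only in the position at which this single line is inserted, so one rotation converts one into the other.

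For the entry-matching I would use that in $\tilde{M}$ the pairwise value of two $\la$-parts is $f_{(\cdot,\cdot)}$, and that of a $\la$-part against a $\mu$-part is $f_{\la\text{-part}-\mu\text{-part}}$. Since $\la_{r_0,1}\equiv\la_{r_0,2}\equiv r_0~(\mathrm{mod}~s)$ and every part in sight is positive, Lemma \ref{e:identity2}(1) gives $f_{(\la_{r_0,1},\la_k)}=f_{(\la_{r_0,2},\la_k)}$ for each common $\la$-part $\la_k$, because for positive arguments $f_{(m,n)}$ depends only on the residues of $m$ and $n$ modulo $s$. For a remaining $\mu$-part $\mu_l$ ($l\geq 2$) the relevant entries are $f_{\la_{r_0,1}-\mu_l}$ and $f_{\la_{r_0,2}-\mu_l}=f_{\mu_1-\mu_l}$; these agree since $s\mid(\la_{r_0,1}-\mu_l)\iff\mu_l\equiv r_0\iff s\mid(\mu_1-\mu_l)$, and in the divisible case both differences are positive (this is exactly where the hypothesis $\la_{r_0,2}=\mu_1$ enters: $\mu_l<\mu_1=\la_{r_0,2}<\la_{r_0,1}$ because $\mu$ is strict with largest part $\mu_1$), so both equal $2$. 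The same check disposes of an appended zero part $\mu_a=0$, where both entries collapse to $f_{\la_{r_0,1}}=f_{\la_{r_0,2}}$.

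With the matching in hand, it remains to bookkeep positions. After deleting $\mu_1$ the $\mu$-block fills the first $a-1$ rows, so the $\la$-block begins at global row $a$. In $\tilde{M}(\la^{\hat{j}}/\mu^{\hat{1}})$ the distinguished part $\la_{r_0,1}=\la_i$ occupies $\la$-block position $i$ (nothing before it is deleted, as $i<j$), i.e. global position $a+i-1$; in $\tilde{M}(\la^{\hat{i}}/\mu^{\hat{1}})$ the distinguished part $\la_{r_0,2}=\la_j$ occupies $\la$-block position $j-1$ (one earlier part $\la_i$ is deleted), i.e. global position $a+j-2$, while all other common lines retain their relative order. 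Hence moving the distinguished line from position $a+i-1$ to $a+j-2$ via $R^{\,q}_{\,p}C^{\,q}_{\,p}$ with $p=a+i-1$ and $q=a+j-2$ carries $\tilde{M}(\la^{\hat{j}}/\mu^{\hat{1}})$ precisely onto $\tilde{M}(\la^{\hat{i}}/\mu^{\hat{1}})$, and the identity $\operatorname{Pf}(R^{\,q}_{\,p}C^{\,q}_{\,p}(B))=(-1)^{q-p}\operatorname{Pf}(B)$ yields $\operatorname{Pf}(\tilde{M}(\la^{\hat{i}}/\mu^{\hat{1}}))=(-1)^{\,j-i-1}\operatorname{Pf}(\tilde{M}(\la^{\hat{j}}/\mu^{\hat{1}}))$. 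The main obstacle is the entry-matching step: one must confirm the two distinguished lines coincide across \emph{all} regimes of $f$ — the generic divisible case $4(-1)^r$, the boundary $\pm 2$ cases, and the effect of any appended zero part — so that the matrices truly differ by a single rotation with no residual sign or entry mismatch.
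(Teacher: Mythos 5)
Your proposal is correct and is essentially the paper's own argument: the paper proves the lemma in one line by asserting $R^{a+j-2}_{a+i-1}C^{a+j-2}_{a+i-1}(\tilde{M}(\la^{\hat{j}}/\mu^{\hat{1}}))=\tilde{M}(\la^{\hat{i}}/\mu^{\hat{1}})$ and invoking ${\rm Pf}(R^{q}_{p}C^{q}_{p}(B))=(-1)^{q-p}{\rm Pf}(B)$, exactly the rotation you use with $p=a+i-1$, $q=a+j-2$. Your entry-matching via Lemma \ref{e:identity2} (residue-dependence of $f_{(m,n)}$ for positive arguments, positivity of $\la_{r_0,1}-\mu_l$ and $\la_{r_0,2}-\mu_l$ from $\la_{r_0,2}=\mu_1$, and the appended-zero case) simply verifies in detail the identity the paper states without proof.
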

\begin{proof}
It follows from the fact that $R^{a+j-2}_{a+i-1}C^{a+j-2}_{a+i-1}(\tilde{M}(\la^{\hat{j}}/\mu^{\hat{1}}))=\tilde{M}(\la^{\hat{i}}/\mu^{\hat{1}})$.
\end{proof}

In the next theorem $\epsilon(\lambda/\mu)$ is determined implicitly in the proof.  An explicit formula of $\epsilon(\lambda/\mu)$ will be
given later in Theorem \ref{t:thm1}.
\begin{thm}\label{t:iff}
If $(\la,\mu)\in \mathcal{SP}_{s,k}$, then ${\rm Pf}(\tilde{M}(\la/\mu))=\epsilon(\la/\mu)2^{A(\la/\mu)}$, for some $\epsilon(\la/\mu)$, where $A(\lambda/\mu)$ is the normalized $a$-number of
$\lambda/\mu$ and $\epsilon(\la/\mu)=\pm1$. In particular ${\rm Pf}(\tilde{M}(\la/\mu))\neq0$.
\end{thm}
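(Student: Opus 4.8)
The plan is to prove the formula by induction on $a+b$, repeatedly using the iterative formula \eqref{e:Pfit} to strip off one part of $\mu$ (together with one part of $\la$) at a time. Since the statement only claims $\operatorname{Pf}(\tilde M(\la/\mu))=\pm2^{A(\la/\mu)}$, I will track only the power of $2$ and the non-vanishing, leaving the sign implicit; its closed form is deferred to Theorem \ref{t:thm1}. The key point throughout is that, by the characterization $(\rmnum{1}')$--$(\rmnum{2}')$ of $\mathcal{SP}_{s,k}$, every smaller pair produced below again lies in some $\mathcal{SP}_{s,k'}$, so the inductive hypothesis always applies. The base case $a+b=0$ is the empty Pfaffian, equal to $1=2^{0}$.

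Consider first the step with $\mu=\emptyset$. Here condition $(\rmnum{1})$ forces $n_0(\la)=0$ and $n_r(\la)=n_{s-r}(\la)\in\{0,1\}$, so $\la$ is a disjoint union of residue pairs, each a single part of $N_r(\la)$ together with a single part of $N_{s-r}(\la)$. By Lemma \ref{e:identity2} the only nonzero entries of $\tilde M(\la)$ are the couplings $f_{(\la_i,\la_j)}=\pm4$ between two paired parts, so the iterative formula from the proof of Lemma \ref{t:con2'} peels off one pair at a time, each contributing a factor $\pm4=\pm2^{2}$ and reducing $A$ by $2$, since the two single-row skew shapes in the pair each have $a$-number $1$. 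This yields $\operatorname{Pf}(\tilde M(\la))=\pm2^{A(\la/\emptyset)}$.

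Now take $\mu\ne\emptyset$ and apply \eqref{e:Pfit} with $\mu_j=\mu_1$, the largest part of $\mu$, lying in a class $N_r$. By the interlacing $(\rmnum{2})$ the only $\la_i\in N_r(\la)$ with $\la_i\ge\mu_1$ are $\la_{r,1}$ and, in the exceptional \emph{merge} case $\la_{r,2}=\mu_1$, also $\la_{r,2}$; hence the sum has just one or two terms. If $\mu_1>\la_{r,2}$ there is a single term $\pm f_{\la_{r,1}-\mu_1}\operatorname{Pf}(\tilde M(\la^{\hat i}/\mu^{\hat j}))$, and deleting the isolated top row $\la_{r,1}/\mu_1$ lowers $A$ by $1$ when that row is nonempty and by $0$ when it is empty, matching $f_{\la_{r,1}-\mu_1}=2$ resp. $1$. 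If $\la_{r,2}=\mu_1$, the two surviving sub-strips have the same normalized $a$-number as $\la/\mu$, because deleting either the upper box $\la_{r,1}$ or the lower box $\la_{r,2}$ of the merged run leaves its number of horizontal components unchanged; Lemma \ref{l:RC} identifies the two sub-Pfaffians up to a rotation sign, and once the signs are collected the bracket collapses to $2\cdot2^{A}-2^{A}=2^{A}$. In either case $\operatorname{Pf}(\tilde M(\la/\mu))=\pm2^{A(\la/\mu)}$, and positivity of the exponent gives non-vanishing.

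The main obstacle is the merge case: one must verify that the factors $(-1)^{i-j}$ in \eqref{e:Pfit} together with the rotation sign of Lemma \ref{l:RC} conspire to produce the difference $2\cdot2^{A}-2^{A}$ rather than the sum $2\cdot2^{A}+2^{A}$, and that removing either box of a merged run indeed preserves $A(\la/\mu)$. Everything else is routine bookkeeping of how $A(\la/\mu)$ changes under deletion of one row, built on the vanishing already established in Propositions \ref{t:con1}, \ref{t:con4}, \ref{t:con2} and \ref{p:con3}.
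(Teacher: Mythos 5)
Your proposal is correct and takes essentially the same route as the paper's own proof: induction that strips $\mu_1$ using the expansion \eqref{e:Pfit}, a base case $\mu=\emptyset$ where each paired residue class contributes one entry $\pm4$ so that ${\rm Pf}(\tilde{M}(\la))=\pm2^{l(\la)}$, and the merge case $\la_{r_0,2}=\mu_1$ resolved via Lemma \ref{l:RC}, under which the two terms indeed combine to $(2-1)\cdot 2^{A(\la/\mu)}$ with the sign deferred to Theorem \ref{t:thm1}. Your single "one-term" case merely fuses the paper's Cases 1 and 2, and your flagged obstacle (the signs $(-1)^{i-j}$ versus the rotation sign) is settled by exactly the computation the paper performs in its Case 3.
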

\begin{proof} Write $\mu=(\mu_1, \ldots, \mu_a)$ and we use induction on $a=l(\mu)$ (or $l(\mu)+1$ if $0$ is appended to $\mu$). 
If $a=0$, then $b=l(\lambda)$ is even. The partition pair $(\la,\emptyset)\in \mathcal{SP}_{s,k}$ implies that
$n_0(\la)=0$, 
$n_r(\la)=0$ or $1$ $(1\leq r\leq s-1)$, and if $n_{r}(\la)=1$ then $n_{s-r}(\la)=1$.
Suppose $\{r\mid n_r(\la)=1\}_{<}=\{r_1,r_2,\cdots,r_b\}$, then $r_j+r_{b+1-j}=s$, $1\leq j\leq\frac{b}{2}$. Assume $\la_{i_j}\in N_{r_j}(\la)$, $j=1,2,\cdots,b$. Then the $b\times b$ matrix $\tilde{M}(\la)$ has $(-1)^{r_{b+1-j}}4$ at $(i_j,i_{b+1-j})$-entry and $0$ elsewhere, i.e. each column and each row
contains exactly one nonzero entry $4$ (or $-4$). So ${\rm Pf}(\tilde{M}(\la))=\pm2^b$.

Assume the theorem holds for any pair of partitions $(\bar{\la},\bar{\mu})\in \mathcal{SP}_{s,k}$ with $\bar{a}<a$ (here $\bar{a}=l(\bar{\mu})$ or $l(\bar{\mu})+1$ depending on $(\bar{\la},\bar{\mu})$). Consider the case $(\la,\mu)\in\mathcal{SP}_{s,k}$ with $a=l(\mu)$ or $l(\mu)+1$. Suppose $\mu_1\in N_{r_0}(\mu)$, then $\la_{r_0,1}\geq\mu_1\geq\la_{r_0,2}$. $\la_{r_0,1}$(resp. $\la_{r_0,2}$) occupies the $i$th(resp. $j$th) entry in $\la$.
We now divide the proof into three cases, where each case uses Lemma \ref{t:Pfit} (regarding $\mu_1$ as $\mu_j$ in the Lemma).

{\bf Case 1:} If $\la_{r_0,1}=\mu_1>\la_{r_0,2}$, we have ${\rm Pf}(\tilde{M}(\la/\mu))=(-1)^{i-1}{\rm Pf}(\tilde{M}(\la^{\hat{i}}/\mu^{\hat{1}}))$. In this case $$a(N_{r_0}(\la^{\hat{i}})_{>}/N_{r_0}(\mu^{\hat{1}})_{>})=a(N_{r_0}(\la)_{>}/N_{r_0}(\mu)_{>}).$$
Then $A(\la^{\hat{i}}/\mu^{\hat{1}})=A(\la/\mu)$. By the inductive hypothesis
\begin{align*}
{\rm Pf}(\tilde{M}(\la/\mu))&=(-1)^{i-1}{\rm Pf}(\tilde{M}(\la^{\hat{i}}/\mu^{\hat{1}}))\\
&=(-1)^{i-1}\epsilon(\la^{\hat{i}}/\mu^{\hat{1}})2^{A(\la^{\hat{i}}/\mu^{\hat{1}})}\\
&=(-1)^{i-1}\epsilon(\la^{\hat{i}}/\mu^{\hat{1}})2^{A(\la/\mu)}.
\end{align*}
{\bf Case 2:} If $\la_{r_0,1}>\mu_1>\la_{r_0,2}$, then ${\rm Pf}(\tilde{M}(\la/\mu))=(-1)^{i-1}2{\rm Pf}(\tilde{M}(\la^{\hat{i}}/\mu^{\hat{1}})).$ Note that $$a(N_{r_0}(\la^{\hat{i}})_{>}/N_{r_0}(\mu^{\hat{1}})_{>})=a(N_{r_0}(\la)_{>}/N_{r_0}(\mu)_{>})-1.$$
Then $A(\la^{\hat{i}}/\mu^{\hat{1}})=A(\la/\mu)-1$. By the inductive hypothesis again,\begin{align*} {\rm Pf}(\tilde{M}(\la/\mu))&=(-1)^{i-1}2{\rm Pf}(\tilde{M}(\la^{\hat{i}}/\mu^{\hat{1}}))\\
&=(-1)^{i-1}2\epsilon(\la^{\hat{i}}/\mu^{\hat{1}})\times2^{A(\la^{\hat{i}}/\mu^{\hat{1}})}\\
&=(-1)^{i-1}\epsilon(\la^{\hat{i}}/\mu^{\hat{1}})2^{A(\la/\mu)}.
\end{align*}
{\bf Case 3:} If $\la_{r_0,1}>\mu_1=\la_{r_0,2}$, then $a(N_{r_0}(\la^{\hat{i}})_{>}/N_{r_0}(\mu^{\hat{1}})_{>})=a(N_{r_0}(\la)_{>}/N_{r_0}(\mu)_{>})$.\\
 So $A(\la^{\hat{i}}/\mu^{\hat{1}})=A(\la/\mu)$. Therefore
\begin{align*}
{\rm Pf}(\tilde{M}(\la/\mu))=&(-1)^{i-1}2{\rm Pf}(\tilde{M}(\la^{\hat{i}}/\mu^{\hat{1}}))+(-1)^{j-1}{\rm Pf}(\tilde{M}(\la^{\hat{j}}/\mu^{\hat{1}}))\\
=&(-1)^{i-1}{\rm Pf}(\tilde{M}(\la^{\hat{i}}/\mu^{\hat{1}}))~~~~(\text {by Lemma \ref{l:RC}})\\
=&(-1)^{i-1}\epsilon(\la^{\hat{i}}/\mu^{\hat{1}})2^{A(\la^{\hat{i}}/\mu^{\hat{1}})}~~~~(\text {by induction})\\
=&(-1)^{i-1}\epsilon(\la^{\hat{i}}/\mu^{\hat{1}})2^{A(\la/\mu)}.
\end{align*}
Summarizing the three cases, ${\rm Pf}(\tilde{M}(\la/\mu))=\epsilon(\la/\mu)2^{A(\la/\mu)}$, where $\epsilon(\la/\mu)=(-1)^{i-1}\epsilon(\la^{\hat{i}}/\mu^{\hat{1}})$.
\end{proof}

\begin{rem} When $\mu=\emptyset$, the proof of Theorem \ref{t:iff} implies that
 ${\rm Pf}(\tilde{M}(\la))=\pm2^{l(\la)}$ for $(\la,\emptyset)\in \mathcal{SP}_{s,k}$. Therefore
\begin{align}
p_{s}\circ q_{k}=\sum_{(\la,\emptyset)\in\mathcal{SP}_{s,k}}\pm Q_{\lambda},
\end{align}
which means that the plethysm $p_{s}\circ q_{k}$ is multiplicity-free.
\end{rem}

Before closing this section, we determine $\epsilon(\la/\mu)$. Let $\la=(\la_1,\la_2,\cdots,\la_b)$, $\mu=(\mu_1,\mu_2,\cdots,\mu_a)$ for $(\la,\mu)\in\mathcal{SP}_{s,k}$. If parts of $\lambda$ are permuted by $\sigma\in\mathfrak{S}_{b}$, the resulted composition is denoted as $\sigma(\lambda)$.
   We now reordered $\lambda$ as $\tilde{\la}=\sigma(\la)=(\tilde{\la}_1, \ldots, \tilde{\la}_b)$ according to the following procedure:

(1) when $1\leq i\leq a$, if $\mu_i=\mu_{r_i,j_i}$, then $\tilde{\la}_i=\la_{r_i,j_i}$, denote $F(\la)=(\tilde{\la}_1,\tilde{\la}_2,\cdots,\tilde{\la}_a)$.

(2) when $a<i\leq b$, denote $R(\la)=\la\setminus F(\la)$. Since $(\la,\mu)\in\mathcal{SP}_{s,k}$, both $N_{j}(R(\la))$ and $N_{s-j}(R(\la))$ have only one element or are empty ($j=1,3,5,\cdots,s-2$). Suppose those pairs of nonempty sets are $(N_{j_1}(R(\la)),N_{s-j_1}(R(\la)))\cdots (N_{j_k}(R(\la)),N_{s-j_k}(R(\la)))$, where $j_1<j_2<\cdots<j_k$ are all odd. Then we define $\tilde{\la}_i\in N_{j_{\frac{i-a+1}{2}}}(R(\la))$ and $\tilde{\la}_{i+1}\in N_{s-j_{\frac{i-a+1}{2}}}(R(\la))$, $i=a+1,a+3,\cdots,b-1.$  (see Example \ref{t:sgn})

Note that $\tilde{M}(\lambda/\mu)$ also makes sense even when $\la$ and $\mu$ are compositions.
\begin{lem}\label{l:permutation}
Let $\sigma\in\mathfrak{S}_{b}$ be the permutation defined above, then ${\rm Pf}(\tilde{M}(\la/\mu))=sgn(\sigma){\rm Pf}(\tilde{M}(\sigma(\la)/\mu))$.
\end{lem}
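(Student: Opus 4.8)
The plan is to recognize this identity as nothing more than the transformation law of a Pfaffian under congruence by a permutation matrix, applied to the $\la$-block of $\tilde{M}(\la/\mu)$. Inspecting the entries of $\tilde{M}(\la/\mu)$ --- namely $(\tilde{M}_{a\times a})_{i,j}=0$, $(\tilde{M}_{a\times b})_{i,j}=f_{\la_j-\mu_{a-i+1}}$ and $(\tilde{M}_{b\times b})_{i,j}=f_{(\la_i,\la_j)}$ --- one sees that the parts $\la_1,\dots,\la_b$ enter only through the last $b$ row/column indices, while the first $a$ indices are governed solely by $\mu$. Thus permuting the parts of $\la$ by $\sigma$ should amount to \emph{simultaneously} permuting the last $b$ rows and the last $b$ columns of $\tilde{M}(\la/\mu)$ by $\sigma$, leaving the $\mu$-indices untouched.

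First I would make this precise. Let $P=P_\sigma$ be the $b\times b$ permutation matrix with $(P)_{ij}=\delta_{i,\sigma(j)}$ and set $\hat P=\mathrm{diag}(I_a,P)$, an $(a+b)\times(a+b)$ permutation matrix fixing the first $a$ coordinates. A direct entrywise check then gives $\tilde{M}_{a\times b}(\sigma(\la))=\tilde{M}_{a\times b}(\la)\,P$ and $\tilde{M}_{b\times b}(\sigma(\la))=P^{T}\tilde{M}_{b\times b}(\la)\,P$, while $\tilde{M}_{a\times a}$ is unchanged; assembling the four blocks (and checking the coupling block $-\tilde{M}_{a\times b}^{T}$ transforms compatibly) yields the single clean identity
\begin{align*}
\tilde{M}(\sigma(\la)/\mu)=\hat P^{\,T}\,\tilde{M}(\la/\mu)\,\hat P.
\end{align*}

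Next I would invoke the Pfaffian congruence formula ${\rm Pf}(B^{T}AB)=\det(B)\,{\rm Pf}(A)$ for antisymmetric $A$, specialized to $B=\hat P$, for which $\det(\hat P)=\det(P)=sgn(\sigma)$. This gives ${\rm Pf}(\tilde{M}(\sigma(\la)/\mu))=sgn(\sigma)\,{\rm Pf}(\tilde{M}(\la/\mu))$, and since $sgn(\sigma)^2=1$ the claimed identity ${\rm Pf}(\tilde{M}(\la/\mu))=sgn(\sigma)\,{\rm Pf}(\tilde{M}(\sigma(\la)/\mu))$ follows at once. Alternatively, staying entirely within the machinery just set up, I could factor $\sigma$ into a sequence of cyclic shifts and apply the paired operation $R^{q}_{p}C^{q}_{p}$ repeatedly on the index ranges shifted by $a$: each step contributes the sign $(-1)^{q-p}$ recorded in ${\rm Pf}(R^{q}_{p}C^{q}_{p}(B))=(-1)^{q-p}{\rm Pf}(B)$, and these signs multiply to $sgn(\sigma)$.

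The step I expect to require the most care is not the Pfaffian identity itself but the bookkeeping in the congruence: verifying that the coupling block $\tilde{M}_{a\times b}$ and its transpose transform consistently so that $\hat P^{T}\tilde{M}(\la/\mu)\hat P$ is again block-antisymmetric of the prescribed form, and pinning down the orientation convention for $\sigma(\la)$ (whether $\sigma(\la)_j=\la_{\sigma(j)}$ or $\la_{\sigma^{-1}(j)}$). Fortunately the sign is insensitive to this last choice, since $sgn(\sigma)=sgn(\sigma^{-1})$, so either convention yields the same conclusion; note also that the lemma holds for an arbitrary $\sigma\in\mathfrak{S}_b$, the particular reordering defined above being irrelevant to the argument.
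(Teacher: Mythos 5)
Your proof is correct and takes essentially the same approach as the paper: the paper's one-line argument observes that transposing two parts of $\la$ amounts to simultaneously exchanging the corresponding row pair and column pair of $\tilde{M}(\la/\mu)$, each swap negating the Pfaffian, which is precisely the generator-by-generator form of your congruence identity $\tilde{M}(\sigma(\la)/\mu)=\hat{P}^{T}\tilde{M}(\la/\mu)\hat{P}$ combined with ${\rm Pf}(B^{T}AB)=\det(B)\,{\rm Pf}(B)$-type reasoning. The only point worth making explicit is that your entrywise verification of the congruence in the $b\times b$ block tacitly uses the antisymmetry $f_{(m,n)}=-f_{(n,m)}$ from Lemma \ref{e:identity2}(2) (valid since all parts of $\la$ are positive), a dependence the paper's version shares implicitly.
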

\begin{proof}
Note that $\tilde{M}((\cdots,\la_i,\cdots,\la_j,\cdots)/\mu)$ can be obtained from $\tilde{M}((\cdots,\la_j,\cdots,\la_i,\cdots)/\mu)$ by exchanging the $(a+i)$th row with $(a+j)$th row and the $(a+i)$th column and the $(a+j)$th column.
\end{proof}

\begin{thm}\label{t:thm1}
Suppose $(\la,\mu)\in\mathcal{SP}_{s,k}$, and let $\sigma$ be the permutation sending $\la$ to $\tilde{\la}$ with respect to $\mu$, then $\epsilon(\la/\mu)=sgn(\sigma)$.
\end{thm}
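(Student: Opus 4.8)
The plan is to combine the permutation identity of Lemma \ref{l:permutation} with the evaluation of Theorem \ref{t:iff}. Lemma \ref{l:permutation} gives ${\rm Pf}(\tilde{M}(\la/\mu))=sgn(\sigma){\rm Pf}(\tilde{M}(\tilde{\la}/\mu))$, while Theorem \ref{t:iff} gives ${\rm Pf}(\tilde{M}(\la/\mu))=\epsilon(\la/\mu)2^{A(\la/\mu)}$. Hence it suffices to prove the single sign identity ${\rm Pf}(\tilde{M}(\tilde{\la}/\mu))=2^{A(\la/\mu)}$, i.e. that the reordered Pfaffian carries a $+$ sign with the correct power of $2$; equating the two expressions then yields $\epsilon(\la/\mu)=sgn(\sigma)$. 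Since $\tilde{M}$ is defined for compositions (as noted before Lemma \ref{l:permutation}), this target statement makes sense even though $\tilde{\la}$ is only a composition, and the peeling lemmas \ref{t:Pfit} and \ref{l:RC}, which rest only on the Pfaffian row expansion \eqref{e:decomposition2} and on residue/order conditions for individual parts, remain applicable.

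I would prove ${\rm Pf}(\tilde{M}(\tilde{\la}/\mu))=2^{A(\la/\mu)}$ by induction on $a=l(\mu)$, mirroring the induction in Theorem \ref{t:iff}. For the base case $a=0$, the reordering groups the parts of $\la$ into consecutive complementary pairs $(\tilde{\la}_{2l-1},\tilde{\la}_{2l})$ with $\tilde{\la}_{2l-1}\in N_{j_l}(\la)$ and $\tilde{\la}_{2l}\in N_{s-j_l}(\la)$, each $j_l$ odd. By Lemma \ref{e:identity2} the matrix $\tilde{M}(\tilde{\la})$ is then block diagonal with $2\times2$ blocks, the $(2l-1,2l)$-block having Pfaffian $f_{(\tilde{\la}_{2l-1},\tilde{\la}_{2l})}=4(-1)^{s-j_l}$. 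The crucial point is that $s$ and $j_l$ are both odd, so $s-j_l$ is even and every block Pfaffian equals $+4$; hence ${\rm Pf}(\tilde{M}(\tilde{\la}))=4^{b/2}=2^{b}=2^{A(\la)}$. This is exactly where the deliberate choice in the reordering procedure to list the odd residue first pays off.

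For the inductive step, write $\mu_1=\mu_{r_0,1}$, the largest part of $\mu$ and hence the largest in its residue class $r_0$, so the reordering gives $\tilde{\la}_1=\la_{r_0,1}$. I would first establish the combinatorial compatibility of the reordering with the recursion: if $i$ is the position of $\la_{r_0,1}$ in $\la$, then $\tilde{\la}^{[1]}=(\tilde{\la}_2,\ldots,\tilde{\la}_b)$ is precisely the reordering of the smaller pair $(\la^{\hat{i}},\mu^{\hat{1}})$ with respect to $\mu^{\hat{1}}$. This amounts to checking that deleting the top part of residue class $r_0$ shifts the within-class matching indices consistently and leaves $R(\la)$, together with its complementary-pair arrangement, unchanged. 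Granting this, I apply Lemma \ref{t:Pfit} to $\tilde{M}(\tilde{\la}/\mu)$ with $j=1$. Because $\tilde{\la}_1=\la_{r_0,1}$ now sits at position $1$, the leading sign $(-1)^{1-1}$ is $+1$, and the three cases $\la_{r_0,1}=\mu_1>\la_{r_0,2}$, $\la_{r_0,1}>\mu_1>\la_{r_0,2}$, and $\la_{r_0,1}>\mu_1=\la_{r_0,2}$ reduce, exactly as in Theorem \ref{t:iff} (using Lemma \ref{l:RC} to merge the two contributions in the last case, where the $2$ from $\la_{r_0,1}$ and the $1$ from $\la_{r_0,2}=\mu_1$ combine as $2-1=1$), to ${\rm Pf}(\tilde{M}(\tilde{\la}/\mu))=c\cdot{\rm Pf}(\tilde{M}(\tilde{\la}^{[1]}/\mu^{\hat{1}}))$ with $c=1,2,1$ and $A(\la^{\hat{i}}/\mu^{\hat{1}})=A(\la/\mu),A(\la/\mu)-1,A(\la/\mu)$ respectively. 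In each case the induction hypothesis gives ${\rm Pf}(\tilde{M}(\tilde{\la}^{[1]}/\mu^{\hat{1}}))=2^{A(\la^{\hat{i}}/\mu^{\hat{1}})}$, so ${\rm Pf}(\tilde{M}(\tilde{\la}/\mu))=2^{A(\la/\mu)}$, completing the induction.

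The main obstacle I anticipate is precisely the compatibility claim that $\tilde{\la}^{[1]}$ is the reordering of $(\la^{\hat{i}},\mu^{\hat{1}})$: every sign cancellation above—in particular the delicate Case 3 merging via Lemma \ref{l:RC}—depends on the reordering being stable under peeling off $\mu_1$, so that the recursion always removes the part currently in position $1$ and the sign stays $+1$ at every step. I expect this stability to follow from a careful index-shift argument, and the appended-zero case $a=l(\mu)+1$ to be handled by the same bookkeeping, with the zero part placed in residue class $0$.
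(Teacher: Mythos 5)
Your proposal is correct and takes essentially the same route as the paper's own proof: the paper likewise combines Lemma \ref{l:permutation} with Theorem \ref{t:iff} to reduce the claim to showing ${\rm Pf}(\tilde{M}(\sigma(\la)/\mu))>0$, computes the base case $a=0$ as a block-diagonal matrix with $2\times2$ blocks of Pfaffian $+4$ (your observation that $s-j_l$ is even is exactly the point), and runs the same three-case induction, noting only that $i=1$ after reordering so every sign is $+1$. The peeling-stability claim you flag as the main obstacle does hold by the index-shift argument you sketch (deleting $\la_{r_0,1}$ and $\mu_1$ shifts the within-class matching uniformly and leaves $R(\la)$ and its pairing untouched, and the parity conditions in $\mathcal{SP}_{s,k}$ in fact force $l(\la)+l(\mu)$ even, so the appended-zero case is vacuous); the paper leaves all of this implicit in ``the rest of the proof is similar to that of Theorem \ref{t:iff}.''
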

\begin{proof} By Theorem \ref{t:iff} and Lemma \ref{l:permutation}, it is enough
 to prove ${\rm Pf}(\tilde{M}(\sigma(\la)/\mu))>0$. This is verified by induction on $a=l(\mu)$ (or $l(\mu)+1$). If $a=0$, then
 $b=l(\la)$ is even. By Lemma \ref{e:identity2}, we have $f_{(\tilde{\la}_{i},\tilde{\la}_{i+1})}=4$ if $i=1,3,\cdots,b-1,$ and $f_{(\tilde{\la}_{i},\tilde{\la}_{j})}=0$ else if $i<j$.
This means
 \begin{align*}
\tilde{M}(\sigma(\la))=\tilde{M}(\tilde{\la})=\left(\begin{array}{ccccccc}
0&4&0&0&\cdots&0&0\\
-4&0&0&0&\cdots&0&0\\
0&0&0&4&\cdots&0&0\\
0&0&-4&0&\cdots&0&0\\
\vdots&\vdots&\vdots&\vdots&\ddots&\vdots&\vdots\\
0&0&0&0&\cdots&0&4\\
0&0&0&0&\cdots&-4&0\\
\end{array}\right)_{b\times b},\quad{\rm Pf}(\tilde{M}(\sigma(\la)))=2^b.
\end{align*}
The rest of the proof is similar to that of Theorem \ref{t:iff}. Just note $i=1$ for $\sigma(\la)$ in this case.
\end{proof}
\begin{exmp}\label{t:sgn}
Let $s=7$, $k=7$, $\la=(24_3,23_2,20_6,18_4,17_3,16_2,6_6,5_5,1_1)$ and $\mu=(23_2,18_4,17_3,13_6,10_3)$, where the subscripts record the remainders modulo
 $s$. We can check $(\la,\mu)\in\mathcal{SP}_{s,k}$. Then
\begin{align*}
\begin{cases}
\mu_1=23=\mu_{2,1}\longleftrightarrow \la_{2,1}=\tilde{\lambda}_{1}=\lambda_{\sigma(1)}=23=\lambda_{2},& \\
\mu_2=18=\mu_{4,1}\longleftrightarrow \la_{4,1}=\tilde{\lambda}_{2}=\lambda_{\sigma(2)}=18=\lambda_{4},&\\
\mu_3=17=\mu_{3,1}\longleftrightarrow \la_{3,1}=\tilde{\lambda}_{3}=\lambda_{\sigma(3)}=24=\lambda_{1},& \\
\mu_{4}=13=\mu_{6,1}\longleftrightarrow \la_{6,1}=\tilde{\lambda}_{4}=\lambda_{\sigma(4)}=20=\lambda_{3},&\\
\mu_5=10=\mu_{3,2}\longleftrightarrow \la_{3,2}=\tilde{\lambda}_{5}=\lambda_{\sigma(5)}=17=\lambda_{5}.&
\end{cases}
\end{align*}
So $F(\la)=(23,18,24,20,17)$ and $R(\la)=(16,6,5,1)$. Since $1\in N_{1}(R(\la))$, $6\in N_{6}(R(\la))$, $5\in N_{5}(R(\la))$, $16\in N_{2}(R(\la))$,
the nonempty pairs are $(\{1\},\{6\})$ and $(\{5\},\{16\})$. Thus
$$\tilde{\lambda}_{6}=\lambda_{\sigma(6)}=1=\lambda_{9},~\tilde{\lambda}_{7}=\lambda_{\sigma(7)}=6=\lambda_{7},~ \tilde{\lambda}_{8}=\lambda_{\sigma(8)}=5=\lambda_{8},~ \tilde{\lambda}_{9}=\lambda_{\sigma(9)}=16=\lambda_{6}.$$
Therefore $\tilde{\la}=\sigma(\lambda)=(23,18,24,20,17,1,6,5,16)$, and $\sigma=(1243)(69).$
\end{exmp}

\begin{rem} We give a graphical algorithm to compute $sgn(\sigma)$ as follows:\\
(1) write $\la$ in the top line and $\mu$ in the bottom line;\\
(2) connect top part $\mu_i$ with the lower part $\la_j$ if their congruents are equal modulo $s$, say, $\mu_{r,i}$ is connected with $\la_{r,i}$, $0\leq r\leq s-1$, $i\geq1$;\\
(3) copy the unconnected parts of $\la$ to the bottom line according to the order $\tilde{\la}_{a+1}, \tilde{\la}_{a+2},\cdots,\tilde{\la}_{b}$;\\
(4) connect the unconnected parts with themselves.\\
Then the inversion number of $\sigma$ equals to the number of intersection points except vertical lines.
The following graph is for Example \ref{t:sgn}.
\begin{gather*}
\begin{tikzpicture}[scale=1]
    \coordinate (Origin)   at (0,0);
    \coordinate (XAxisMin) at (0,0);
    \coordinate (XAxisMax) at (9,0);
    \coordinate (YAxisMin) at (0,0);
    \coordinate (YAxisMax) at (0,-9);
    \draw [thin, black] (0,-0.2) -- (3,-1.8);
    \draw [thin, black] (1.5,-0.2) -- (0,-1.8);
    \draw [thin, black] (4.5,-0.2) -- (1.5,-1.8);
    \draw [thin, black] (3,-0.2) -- (4.5,-1.8);
    \draw [thin, black] (6,-0.2) -- (6,-1.8);
    \draw [thin, black] (7.5,-0.2) -- (12,-1.8);
    \draw [thin, black] (9,-0.2) -- (9,-1.8);
    \draw [thin, black] (10.5,-0.2) -- (10.5,-1.8);
     \draw [thin, black] (12,-0.2) -- (7.5,-1.8);
      \node[inner sep=2pt] at (0,0) {$24$};
     \node[inner sep=2pt] at (1.5,0) {$23$};
     \node[inner sep=2pt] at (3,0) {$20$};
     \node[inner sep=2pt] at (4.5,0) {$18$};
     \node[inner sep=2pt] at (6,0) {$17$};
     \node[inner sep=2pt] at (7.5,0) {$16$};
    \node[inner sep=2pt] at (9,0) {$6$};
    \node[inner sep=2pt] at (10.5,0) {$5$};
     \node[inner sep=2pt] at (12,0) {$1$};
     \node[inner sep=2pt] at (0,-2) {$23$};
     \node[inner sep=2pt] at (1.5,-2) {$18$};
     \node[inner sep=2pt] at (3,-2) {$17$};
     \node[inner sep=2pt] at (4.5,-2) {$13$};
     \node[inner sep=2pt] at (6,-2) {$10$};
     \node[inner sep=2pt] at (7.5,-2) {\textcolor{red}{$1$}};
     \node[inner sep=2pt] at (9,-2) {\textcolor{red}{$6$}};
     \node[inner sep=2pt] at (10.5,-2) {\textcolor{red}{$5$}};
     \node[inner sep=2pt] at (12,-2) {\textcolor{red}{$16$}};
     \node[] at (9.75,-2.5) {$\underbrace{\quad\quad\quad\quad\quad\quad\quad\quad\quad\quad\quad\quad\quad}_{\text{the unconnected parts}}$};
     \node[] at (3,-2.5) {$\underbrace{\quad\quad\quad\quad\quad\quad\quad\quad\quad\quad\quad\quad\quad\quad\quad\quad\quad}_{\mu}$};
     \node[] at (6,0.5) {$\overbrace{\quad\quad\quad\quad\quad\quad\quad\quad\quad\quad\quad\quad\quad\quad\quad\quad\quad\quad\quad\quad\quad\quad\quad\quad
     \quad\quad\quad\quad\quad\quad\quad\quad}^{\la}$};
     \end{tikzpicture}
 \end{gather*}
\end{rem}

\begin{cor}\label{t:combin.}
We have the following combinatorial plethystic Murnaghan-Nakayama rule for Schur $Q$-functions:
\begin{align}\label{e:combin.}
(p_s\circ q_k)Q_{\mu}=\sum\limits_{\lambda}sgn(\sigma)2^{A(\lambda/\mu)}2^{l(\mu)-l(\lambda)}Q_{\lambda}
\end{align}
where the sum runs over all strict partitions $\lambda\supset\mu$ such that $\lambda/\mu$ is a symmetric horizontal $(s,k)$-strip, $\sigma$ is the permutation of the reordered composition of $\la$ with respect to $\mu$, and $A(\lambda/\mu)$ is the normalized $a$-number of
the skew diagram $\lambda/\mu$.
\end{cor}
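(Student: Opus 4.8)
The plan is to obtain the corollary by assembling the operator identity of Corollary \ref{t:cor2} with the explicit Pfaffian evaluation from Theorems \ref{t:iff} and \ref{t:thm1}; no genuinely new computation is needed. First I would record that the vertex operator coefficient $T^{(s)}_{k}$ acts as multiplication by the symmetric function $p_s\circ q_k$, as noted among the facts preceding Theorem \ref{t:Q}. Hence $T^{(s)}_{k}Q_{\mu}.1=(p_s\circ q_k)Q_{\mu}$, and since $q_k=Q_{(k)}$ (the raising-operator product in \eqref{e:Schur} is trivial for a one-part partition), this is exactly the left-hand side of \eqref{e:combin.}.

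Next I would invoke Corollary \ref{t:cor2}, which already gives
\begin{align*}
(p_s\circ q_k)Q_{\mu}=\sum_{\la}2^{l(\mu)-l(\la)}\operatorname{Pf}(\tilde{M}(\lambda/\mu))Q_{\la},
\end{align*}
the sum ranging over all strict $\la\supset\mu$ with $|\la|=|\mu|+sk$. The remaining task is purely to evaluate $\operatorname{Pf}(\tilde{M}(\la/\mu))$ term by term. The vanishing results collected in Propositions \ref{t:con1}, \ref{t:con4}, \ref{t:con2} and \ref{p:con3} show that $\operatorname{Pf}(\tilde{M}(\la/\mu))=0$ unless $(\la,\mu)\in\mathcal{SP}_{s,k}$, so the effective summation collapses onto exactly those $\la$ for which $\la/\mu$ is a symmetric horizontal $(s,k)$-strip, matching the index set on the right of \eqref{e:combin.}.

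Finally I would substitute the value of the surviving Pfaffians. For $(\la,\mu)\in\mathcal{SP}_{s,k}$, Theorem \ref{t:iff} gives $\operatorname{Pf}(\tilde{M}(\la/\mu))=\epsilon(\la/\mu)2^{A(\la/\mu)}$, and Theorem \ref{t:thm1} identifies $\epsilon(\la/\mu)=sgn(\sigma)$, where $\sigma$ is the reordering permutation of $\la$ with respect to $\mu$. Inserting these yields
\begin{align*}
(p_s\circ q_k)Q_{\mu}=\sum_{\la}sgn(\sigma)\,2^{A(\la/\mu)}\,2^{l(\mu)-l(\la)}Q_{\la},
\end{align*}
which is precisely \eqref{e:combin.}.

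Since every ingredient is already in place, I do not expect a substantive obstacle; the corollary is a bookkeeping assembly of the preceding theorems. The only point demanding a little care is terminological rather than mathematical: one must confirm that the defining conditions of $\mathcal{SP}_{s,k}$ coincide with the combinatorial notion of a \emph{symmetric horizontal $(s,k)$-strip}, via the equivalent characterization $(\rmnum{1}')$--$(\rmnum{2}')$, so that the final sum is phrased in the combinatorial language; and that the factor $2^{l(\mu)-l(\la)}$ inherited from Corollary \ref{t:cor2} is kept distinct from the factor $2^{A(\la/\mu)}$ coming out of the Pfaffian evaluation.
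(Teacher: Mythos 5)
Your proposal is correct and follows exactly the same route as the paper's own proof, which likewise assembles Corollary \ref{t:cor2} with Theorems \ref{t:iff} and \ref{t:thm1} (the vanishing outside $\mathcal{SP}_{s,k}$ having been recorded via Propositions \ref{t:con1}, \ref{t:con4}, \ref{t:con2} and \ref{p:con3}). Your added care about identifying $q_k=Q_{(k)}$ and matching $\mathcal{SP}_{s,k}$ with the symmetric horizontal $(s,k)$-strip condition is sound but only spells out what the paper leaves implicit.
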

\begin{proof}
Recall that $T^{(s)}_{k}=p_s\circ q_k$. It follows from Corollary \ref{t:cor2}, Theorem \ref{t:iff} and Theorem \ref{t:thm1}.
\end{proof}

When $\mu=\emptyset$, the corollary gives a combinatorial formula for the plethysm $p_s\circ q_k$. Note that
Schur Q-functions have a factorization formula given by Mizukawa \cite{M} which leads to a combinatorial formula for the plethysm $p_{s}\circ Q_{\lambda}$.

\section{\textbf{ A plethystic Murnaghan-Nakayama rule for Hall-Littlewood functions}}
\subsection{\bf {Vertex operator realization of Hall-Littlewood functions}}

Let us recall the vertex operator realization of Hall-Littlewood functions \cite{J2}.
The \textit{vertex operator} $H(z)$ and its adjoint operator $H^*(z)$ are t-parameterized maps $\Lambda_{\mathbb{Q}(t)}\longrightarrow \Lambda_{\mathbb{Q}(t)}[[z, z^{-1}]]=\Lambda_{\mathbb{Q}(t)}\otimes \mathbb{Q}(t)[[z,z^{-1}]]$ given by
\begin{align}
\label{e:hallop}
H(z)&=\mbox{exp} \left( \sum\limits_{m\geq 1} \dfrac{1-t^{m}}{m}p_mz^{m} \right) \mbox{exp} \left( -\sum \limits_{m\geq 1} \frac{\partial}{\partial p_m}z^{-m} \right) \notag
=\sum_{m\in\mathbb Z}H_mz^{m},\\
H^*(z)&=\mbox{exp} \left(-\sum\limits_{m\geq 1} \dfrac{1-t^{m}}{m}p_mz^{m} \right) \mbox{exp} \left(\sum \limits_{m\geq 1} \frac{\partial}{\partial p_m}z^{-m} \right) \notag
=\sum_{m\in\mathbb Z}H^*_mz^{-m}.
\end{align}
In particular, when the operator $H(z)$ acts on the vacuum vector $1$,
\begin{align}
H(z).1=\mbox{exp}\left(\sum_{m\geq1}\frac{(1-t^{m})p_{m}}{m}z^{m}\right)=\sum_{m\geq0}q_{m}(t)z^{m}
\end{align}
where $q_{m}(t)$ is the Hall-Littlewood polynomial (in the $p_n$) associated with one-row partition $(m)$:
\begin{align}\label{e:q_{m}}
H_{m}.1=q_{m}(t)=\sum_{\lambda\vdash m}\frac{p_{\lambda}}{z_{\lambda}(t)}.
\end{align}
Similarly, for $m\geq0$ we have that
\begin{align}\label{H^{*}}
H^{\ast}_{-m}.1=\sum_{\lambda\vdash m}\frac{(-1)^{l(\lambda)}}{z_{\lambda}(t)}p_{\lambda}.
\end{align}

More generally, for a partition $\lambda=(\lambda_{1},\ldots ,\lambda_{l})$,
the vertex operator product  $H_{\lambda_{1}}\cdots H_{\lambda_{l}}. 1$ is the
Hall-Littlewood function $Q_{\la}(t)$ \cite[Prop. 2.17]{J2}:
\begin{equation}\label{e:HL}
H_{\lambda_{1}}\cdots H_{\lambda_{l}}. 1=Q_{\la}(t)=
\prod\limits_{i<j} \dfrac{1-R_{ij}}{1-tR_{ij}}q_{\lambda_{1}}\cdots q_{\lambda_{l}}
\end{equation}
where the raising operator $R_{ij}q_{\la}=q_{(\la_{1},\ldots ,\la_{i}+1,\ldots ,\la_{j}-1,\ldots , \la_{l})}$.

Moreover, $H_{\lambda}.1=H_{\lambda_1}\cdots H_{\lambda_l}.1$ are orthogonal in $\Lambda_{\mathbb{Q}(t)}$ \cite[Prop. 3.9]{J2}
\begin{align}\label{e:orth}
\langle H_{\lambda}.1, H_{\mu}.1\rangle_t=\delta_{\lambda\mu}b_{\lambda}(t),
\end{align}
where $b_{\lambda}(t)=(1-t)^{l(\lambda)}\prod_{i\geqslant 1}[m_i(\lambda)]!$ and $[n]=\frac{1-t^n}{1-t}$, $[n]!=[n][n-1]\cdots[1]$.

We collect the relations of the vertex operators as follows.
\begin{lem} \cite[Prop. 2.12, Lem. 2.16]{J2} The operators $H_n$ and $H_n^*$ satisfy the following relations
\begin{align}\label{e:com1}
H_{m}H_n-tH_nH_m&=tH_{m+1}H_{n-1}-H_{n-1}H_{m+1}\\ \label{e:com2}
H^*_{m}H^*_n-tH^*_nH^*_m&=tH^*_{m-1}H^*_{n+1}-H^*_{n+1}H^*_{m-1}\\ \label{e:com3}
H_{m}H^*_n-tH^*_nH_m&=tH_{m-1}H^*_{n-1}-H^*_{n-1}H_{m-1}+(1-t)^2\delta_{m, n}\\
H_{-n}. 1&=\delta_{n, 0}, \quad H_{n}^{*}. 1=\delta_{n, 0},\quad n\geq0.
\end{align}
where $\delta_{m, n}$ is the Kronecker delta function.
\end{lem}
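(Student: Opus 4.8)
The plan is to lift every relation to an identity of generating series in $z$ and $w$ and then read off components. The essential tool is the contraction rule $\exp(A)\exp(B)=\exp(B)\exp(A)\exp([A,B])$ for scalar $[A,B]$, applied to the only nontrivial bracket $[\partial/\partial p_{m},p_{n}]=\delta_{m,n}$. Writing $H(z)=H_{+}(z)H_{-}(z)$ with $H_{+}(z)=\exp\bigl(\sum_{m\ge1}\frac{1-t^{m}}{m}p_{m}z^{m}\bigr)$ and $H_{-}(z)=\exp\bigl(-\sum_{m\ge1}\frac{\partial}{\partial p_{m}}z^{-m}\bigr)$, and using $\sum_{m\ge1}\frac{1-t^{m}}{m}x^{m}=\ln\frac{1-tx}{1-x}$, I would first record the four normal-ordered product expansions $H(z)H(w)=\frac{z-w}{z-tw}\,{:}H(z)H(w){:}$, the same factor $\frac{z-w}{z-tw}$ for $H^{*}(z)H^{*}(w)$, and $H(z)H^{*}(w)=\frac{z-tw}{z-w}\,{:}H(z)H^{*}(w){:}$ together with $H^{*}(w)H(z)=\frac{w-tz}{w-z}\,{:}H(z)H^{*}(w){:}$, where ${:}\,\cdots{:}$ means moving all $p_{m}$ to the left of all $\partial/\partial p_{m}$ and is symmetric in $z,w$.

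For \eqref{e:com1} and \eqref{e:com2} the symmetry of the normal-ordered factor does all the work. Since ${:}H(z)H(w){:}={:}H(w)H(z){:}$, multiplying out the denominators (which introduces no pole at $z=w$) gives the honest operator identity $(z-tw)H(z)H(w)=(z-w)\,{:}H(z)H(w){:}=(tz-w)H(w)H(z)$; comparing coefficients of $z^{m+1}w^{n}$ on the two ends produces exactly $H_{m}H_{n}-tH_{n}H_{m}=tH_{m+1}H_{n-1}-H_{n-1}H_{m+1}$. An identical computation for $H^{*}$, extracting coefficients from $(z-tw)H^{*}(z)H^{*}(w)=(tz-w)H^{*}(w)H^{*}(z)$, produces \eqref{e:com2}.

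The main obstacle is the mixed relation \eqref{e:com3}, precisely because of the scalar term $(1-t)^{2}\delta_{m,n}$. I would first reformulate it as the single series identity $(w-tz)H(z)H^{*}(w)-(tw-z)H^{*}(w)H(z)=w(1-t)^{2}\,\delta(z/w)$, where $\delta(z/w)=\sum_{n\in\mathbb{Z}}(z/w)^{n}$; this follows because $\sum_{m,n}(H_{m}H^{*}_{n}-tH_{m-1}H^{*}_{n-1})z^{m}w^{-n}=\frac{w-tz}{w}H(z)H^{*}(w)$, the analogous grouping of $tH^{*}_{n}H_{m}-H^{*}_{n-1}H_{m-1}$ gives $\frac{tw-z}{w}H^{*}(w)H(z)$, and $\sum_{m}(z/w)^{m}=\delta(z/w)$. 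Substituting the two mixed expansions, a direct check shows that $(w-tz)\frac{z-tw}{z-w}$ and $(tw-z)\frac{w-tz}{w-z}$ are one and the same rational function $\frac{P(z,w)}{z-w}$ with $P(z,w)=(w-tz)(z-tw)$. Hence the left side equals $\bigl(\iota_{z,w}\tfrac{P}{z-w}-\iota_{w,z}\tfrac{P}{z-w}\bigr){:}H(z)H^{*}(w){:}$, a difference of the two region-expansions of a single rational function, so its regular part cancels and only the simple pole at $z=w$ survives. Invoking the formal-calculus identity $\iota_{z,w}\frac{1}{z-w}-\iota_{w,z}\frac{1}{z-w}=w^{-1}\delta(z/w)$ and the evaluation $P(w,w)=w^{2}(1-t)^{2}$, together with ${:}H(z)H^{*}(z){:}=1$, collapses this to $w(1-t)^{2}\delta(z/w)$, matching the right side. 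The delicate point throughout is the bookkeeping of the two expansion regions $|z|>|w|$ and $|w|>|z|$, which is exactly what converts the identical vanishing of the rational numerator into the delta term.

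Finally the vacuum relations are immediate: $H(z).1=H_{+}(z)$ and $H^{*}(z).1=\exp\bigl(-\sum_{m\ge1}\frac{1-t^{m}}{m}p_{m}z^{m}\bigr)$ involve only non-negative powers of $z$. Matching against $H(z).1=\sum_{m}(H_{m}.1)z^{m}$ and $H^{*}(z).1=\sum_{m}(H^{*}_{m}.1)z^{-m}$ forces $H_{-n}.1=H^{*}_{n}.1=\delta_{n,0}$ for $n\ge0$.
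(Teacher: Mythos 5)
Your proposal is correct, and it is essentially the same approach as the source of this statement: the paper itself imports the lemma from \cite{J2} without proof, and both \cite{J2} and this paper's own vertex-operator arguments (e.g.\ the proofs of Theorem \ref{t:Q} and Proposition \ref{t:HL}) proceed exactly by your contraction/normal-ordering expansions $H(z)H(w)=\frac{z-w}{z-tw}\,{:}H(z)H(w){:}$, $H(z)H^*(w)=\frac{z-tw}{z-w}\,{:}H(z)H^*(w){:}$, followed by clearing denominators and extracting coefficients. Your treatment of the delicate mixed relation \eqref{e:com3} — recasting it as $(w-tz)H(z)H^{*}(w)-(tw-z)H^{*}(w)H(z)=w(1-t)^{2}\delta(z/w)$ and obtaining the scalar term from the difference of the two region expansions of $\frac{(w-tz)(z-tw)}{z-w}$ together with $P(w,w)=w^{2}(1-t)^{2}$ and ${:}H(w)H^{*}(w){:}=1$ — is precisely the standard (and correct) mechanism that produces the $(1-t)^{2}\delta_{m,n}$ term.
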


In general, expressing $H_{\mu}$ for a composition $\mu$ in terms of the basis elements $H_{\lambda}$, $\lambda\in\mathcal P$
can be formulated as follows.
Define the transformation $S_{i,a}$ acting on composition $\mu=(\mu_1, \cdots, \mu_i, \mu_{i+1}, \cdots)\\(\mu_{i+1}>\mu_i)$ by $$S_{i,a}(\mu_1, \cdots, \mu_i, \mu_{i+1}, \cdots)=(\mu_1, \cdots, \mu_{i+1}-a, \mu_{i}+a, \cdots),$$
where $a\leq \lfloor (\mu_{i+1}-\mu_i)/2\rfloor$. And the action of $S_{i,a}$ on $H_{\mu}$ is defined by $S_{i,a}H_{\mu}=H_{S_{i,a}\mu}$.

Given a composition $\mu$, denote the coefficient of $S_{i,a}H_{\mu}$ in $H_{\mu}$ by
\begin{equation}\label{e:straight}
C(S_{i,a})=\begin{cases} t & a=0\\ t^{a+1}-t^{a-1} & 1\leq a< \lfloor\frac{\mu_{i+1}-\mu_i}2\rfloor \\ t^{a+\epsilon}-t^{a-1} &  a= \lfloor\frac{\mu_{i+1}-\mu_i}2\rfloor
\end{cases}
\end{equation}
where $0\leq\epsilon\leq1$ with $\epsilon\equiv \mu_{i+1}-\mu_{i} (mod\, 2)$. If $\mu_i<\mu_{i+1}$, then it follows from \cite[p.214, Ex.2]{Mac} that
\begin{equation}\label{e:transf}
H_{\mu}=\sum_{a=0}^{\lfloor\frac{\mu_{i+1}-\mu_i}2\rfloor}C(S_{i, a})H_{S_{i, a}\mu}
\end{equation}
Observe that the $i$th and $(i+1)$th parts on the right-hand side are in the right order. Thus we can repeatedly use \eqref{e:transf} to regularize
composition $\mu$ into a partition as follows.

We call a sequence of compositions $(\mu^{(0)}, \mu^{(1)}, \ldots, \mu^{(r)})$ a {\it straightening path} from a composition $\mu=\mu^{(0)}$ to a partition $\mu^{(r)}\in\mathcal P$
if all compositions $\mu^{(k)}$ are proper compositions except the last one and $\mu^{(k)}=S_{i_k, a_k}\mu^{(k-1)}$ for some nonnegative $a_k\leq \lfloor (\mu^{(k-1)}_{i_k+1}-\mu^{(k-1)}_{i_k})/2\rfloor$, $k=1, \ldots, r$.
Here a proper composition means that there exists 
some inversion among its parts, i.e. it is not a partition.

For a straightening path $\underline{\mu}=(\mu^{(0)}, \mu^{(1)}, \ldots, \mu^{(r)})$ from a composition $\mu=\mu^{(0)}$ to a partition $\la=\mu^{(r)}$, we define
\begin{equation}\label{e:proc}
C(\underline{S_{\mu}})=C(S_{i_r, a_r})\cdots C(S_{i_2, a_2})C(S_{i_1, a_1})
\end{equation}
where $S_{i_k, a_k}\mu^{(k-1)}=\mu^{(k)}$, $k=1, \ldots, r$.



In particular, when $t=0$, $C(S_{\underline{i}, \underline{a}})=0$  unless all $a_i=1$, and $\mu_{i+1}-\mu_i\geq 2$, in
which case $C(S_{\underline{i}, \underline{1}})=(-1)^r$.
When $t=-1$, $C(S_{\underline{i}, \underline{a}})=0$ unless all $a_i=0$, in which case $C(S_{\underline{i}, \underline{0}})=(-1)^r$.
(Since Schur $Q$-functions $Q_{\lambda}=Q_{\lambda}(t)|_{t=-1}$, combining with \cite[Prop. 4.15]{J1}, we have
$H_{\frac{\mu_{i+1}+\mu_{i}}{2}}H_{\frac{\mu_{i+1}+\mu_{i}}{2}}\mid_{t=-1}=Q_{\frac{\mu_{i+1}+\mu_{i}}{2}}Q_{\frac{\mu_{i+1}+\mu_{i}}{2}}=0$ when $\mu_{i+1}-\mu_{i}$ is even
and $a=\frac{\mu_{i+1}-\mu_{i}}{2}$).


Let $\mu$ be a composition and $\lambda$ be a partition. Denote
\begin{align}\label{e:def}
B(\lambda, \mu)\triangleq \sum\limits_{\underline{\mu}}C(S_{\underline{\mu}})
\end{align}
summed over those straightening paths $\underline{\mu}=(\mu^{(0)}, \mu^{(1)}, \ldots, \mu^{(r-1)}, \mu^{(r)})$ from the composition $\mu=\mu^{(0)}$ to the partition $\la=\mu^{(r)}$
with
each step $\mu^{(k)}=S_{i_k, a_k}(\mu^{(k-1)})$ being chosen at the minimum index $i_k$ among all adjacent inversions of 
$\mu^{(k-1)}$.  Such a straightening path is called a canonical one.

For example, suppose $\mu=(8,7,2,5,6)$. 
The first adjacent inversion of $\mu$ occurs at the $3$rd index,
so $\mu$ is changed to two compositions $(8, 7, 5, 2, 6)$ and $(8, 7, 4, 3, 6)$ by $S_{3, 0}$ and $S_{3, 1}$ respectively.
For the composition $(8, 7, 5, 2, 6)$ the first adjacent inversion appears at the $4$th index, so it is transformed
to compositions $(8, 7, 5, 6, 2), (8, 7, 5, 5, 3)$ and
$(8, 7, 5, 4, 4)$ by $S_{4, 0}, S_{4, 1}, S_{4, 2}$ respectively. One can continue this process until all resulting compositions are partitions,
then $H_{\mu}$ is expressed as a linear combinations of basis elements $H_{\la}$, $\la\in\mathcal P$.
See the following tree diagram for this process which shows all canonical straightening paths of $\mu$.
Here we write $(a,b,c,d,e)$ for $H_{(a,b,c,d,e)}$ and the red $(i,j)$ on arrows stand for $S_{i,j}$.
\begin{gather*}
  \centering
\begin{tikzpicture}
\node[] (1) at(5,0) {(8,7,2,5,6)};
\node[] (2) at(1,-2) {(8,7,5,2,6)};
\node[] (3) at(9,-2) {(8,7,4,3,6)};
\node[] (4) at(-2,-4) {(8,7,5,6,2)};
    \node[] (5) at(1,-4) {(8,7,5,5,3)};
    \node[] (6) at(4,-4) {(8,7,5,4,4)};
    \node[] (7) at(7.5,-4) {(8,7,4,6,3)};
    \node[] (8) at(10.5,-4) {(8,7,4,5,4)};
    \node[] (9) at(-2,-6) {(8,7,6,5,2)};
    \node[] (10) at(6.5,-6) {(8,7,6,4,3)};
    \node[] (11) at(8.5,-6) {(8,7,5,5,3)};
    \node[] (12) at(10.5,-6) {(8,7,5,4,4)};
\draw[->] (1)--(2);
\draw[->] (1)--(3);
\draw[->] (2)--(4);
\draw[->] (2)--(5);
\draw[->] (2)--(6);
\draw[->] (3)--(7);
\draw[->] (3)--(8);
\draw[->] (4)--(9);
\draw[->] (7)--(10);
\draw[->] (7)--(11);
\draw[->] (8)--(12);
\node at(3,-1) {\textcolor{red}{\bf(3,0)}};
\node at(7,-1) {\textcolor{red}{\bf(3,1)}};
\node at(-0.5,-3) {\textcolor{red}{\bf(4,0)}};
\node at(1,-3) {\textcolor{red}{\bf(4,1)}};
\node at(2.5,-3) {\textcolor{red}{\bf(4,2)}};
\node at(8.25,-3) {\textcolor{red}{\bf(4,0)}};
\node at(9.75,-3) {\textcolor{red}{\bf(4,1)}};
\node at(-2,-5) {\textcolor{red}{\bf(3,0)}};
\node at(7,-5) {\textcolor{red}{\bf(3,0)}};
\node at(8,-5) {\textcolor{red}{\bf(3,1)}};
\node at(10.5,-5) {\textcolor{red}{\bf(3,0)}};
\end{tikzpicture}
\end{gather*}

In particular, there are two canonical straightening paths from composition $\mu=(8,7,2,5,6)$ to partition $\la=(8,7,5,4,4)$: $(\mu,(8,7,5,2,6),\la)$ and $(\mu,(8,7,4,3,6),(8,7,4,5,4),\la)$. 
So,
$$B(\la,\mu)=C(S_{4,2})C(S_{3,0})+C(S_{3,0})C(S_{4,1})C(S_{3,1})=t(t-1)\cdot t+t\cdot (t^2-1)\cdot (t^2-1)=t^5-t^3-t^2+t.$$


In summary, 
the following relation can be derived by using \eqref{e:transf} and \eqref{e:def}.

\begin{prop}  \label{p:straight}\cite[Prop. 3.1]{JL2} Suppose $\mu$ is a composition, then
\begin{equation}\label{e:straight2}
H_{\mu}=\sum\limits_{\lambda} B(\lambda,\mu) H_{\la}.
\end{equation}
summed over the partitions $\lambda$ obtainable by canonical straightening paths from the composition $\mu$.
\end{prop}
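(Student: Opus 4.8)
The plan is to derive \eqref{e:straight2} by iterating the local relation \eqref{e:transf} at the leftmost adjacent inversion, which is precisely the recipe encoded in the definition \eqref{e:def} of the canonical straightening weight $B(\la,\mu)$. First I would fix $n=|\mu|$ and let $l$ be the number of parts of $\mu$, and observe that every transformation $S_{i,a}$ preserves both $n$ and $l$ and keeps all parts nonnegative: the two affected parts become $\mu_{i+1}-a\ge(\mu_i+\mu_{i+1})/2\ge 0$ and $\mu_i+a\ge 0$. Hence the entire process stays inside the finite set $\mathcal{C}_{n,l}$ of compositions of $n$ into $l$ nonnegative parts, and I would use the lexicographic order on $\mathcal{C}_{n,l}$ as the well-founded measure driving the induction.

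I would then argue by strong induction with respect to the lexicographic order, assuming the claim for every composition lex-greater than $\mu$. For the base case, if $\mu$ is already a partition it has no adjacent inversion, so the only canonical straightening path is the trivial one $(\mu)$, whose weight is the empty product $1$; thus $B(\mu,\mu)=1$ and $B(\la,\mu)=0$ for $\la\neq\mu$, giving $H_\mu=H_\mu$. For the inductive step, when $\mu$ is not a partition I would let $i$ be the minimum index with $\mu_i<\mu_{i+1}$ and apply \eqref{e:transf} at this $i$ to write $H_\mu=\sum_a C(S_{i,a})H_{S_{i,a}\mu}$. The crucial termination observation is that each $S_{i,a}\mu$ agrees with $\mu$ in positions $1,\dots,i-1$ and has strictly larger $i$th part, since $\mu_{i+1}-a\ge(\mu_i+\mu_{i+1})/2>\mu_i$; therefore $S_{i,a}\mu>_{\mathrm{lex}}\mu$ and the induction hypothesis applies to each summand.

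Substituting $H_{S_{i,a}\mu}=\sum_\la B(\la,S_{i,a}\mu)H_\la$ and regrouping yields $H_\mu=\sum_\la\bigl(\sum_a C(S_{i,a})B(\la,S_{i,a}\mu)\bigr)H_\la$, so everything reduces to the combinatorial identity $\sum_a C(S_{i,a})B(\la,S_{i,a}\mu)=B(\la,\mu)$. This is where the ``minimum index'' convention pays off: because the pivot index is forced to be the leftmost inversion, every canonical straightening path from $\mu$ to $\la$ factors uniquely as a first edge $S_{i,a}$ followed by a canonical path from $S_{i,a}\mu$ to $\la$, and by \eqref{e:proc} the weight multiplies along the path. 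Summing over the first step $a$ and over all continuations reproduces exactly the defining sum \eqref{e:def}, closing the induction.

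I expect the main obstacle to be this last bookkeeping step matching $\sum_a C(S_{i,a})B(\la,S_{i,a}\mu)$ with $B(\la,\mu)$: one must verify that restricting to the minimum inversion index at every stage makes the first edge canonical for $\mu$ while its remainder is canonical for $S_{i,a}\mu$, so that the path recursion for $B$ is genuinely consistent and no canonical path is double-counted or omitted. Termination is the other delicate point, but it is cleanly dispatched by the lexicographic argument above once one notes that $\mathcal{C}_{n,l}$ is finite and each straightening step strictly increases the lex rank.
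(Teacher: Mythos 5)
Your proposal is correct and follows essentially the same route as the paper, which obtains \eqref{e:straight2} precisely by iterating \eqref{e:transf} at the minimal adjacent inversion index (as illustrated by its tree diagram for $\mu=(8,7,2,5,6)$) and collecting the multiplicative weights via \eqref{e:proc} and \eqref{e:def}. Your lexicographic induction simply makes explicit the two points the paper leaves implicit, namely termination and the first-step recursion $B(\lambda,\mu)=\sum_{a} C(S_{i,a})B(\lambda,S_{i,a}\mu)$, both of which you verify correctly.
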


We remark that the condition of fixing each $i_j$ ($j=1, \ldots, r$) in \eqref{e:proc} as the minimal adjacent inversion index along the straightening paths was missing in the statement in \cite[Prop. 3.1]{JL2}.
We also note that $\lambda$ appears only when $\lambda\geq\mu$ in \eqref{e:straight2}
in the dominance order. This is due to the fact that the transformation $S_{i, a}\mu \geq \mu$ for any composition
$\mu$ (Here we extend the dominance order to compositions).

\subsection{\textbf{ A plethystic Murnaghan-Nakayama rule for Hall-Littlewood functions}}

For a fixed positive integer $s$, we introduce the \textit{vertex operator} $L^{(s)}(z)$ and its adjoint operator $L^{(s)\ast}(z)$ as follows:
\begin{align}\label{e:Lop}
L^{(s)}(z)&=\mbox{exp}\left(\sum_{m\geq1}\frac{(1-t^{sm})p_{sm}}{m}z^{m}\right)=\sum_{m\geq0}L^{(s)}_{m}z^{m},\\
L^{(s)\ast}(z)&=\mbox{exp}\left(\sum_{m\geq1}s\frac{\partial}{\partial p_{sm}}z^{-m}\right)=\sum_{m\geq0}L^{(s)\ast}_{m}z^{-m}.
\end{align}
It follows from \eqref{e:Lop} that
\begin{align}\label{e:equal}
L^{(s)}_{m}=\sum_{\lambda\vdash m}\frac{p_{s\lambda}}{z_{\lambda}(t^{s})}=\sum_{\lambda\vdash m}\frac{\prod_{i=1}^{l(\lambda)}(1-t^{s\lambda_{i}})p_{\lambda}(x^{s}_{1},x^{s}_{2},\cdots)}{z_{\lambda}}=q_{m}(x^{s}_{1},x^{s}_{2},\cdots;t^{s})=p_{s}\diamond q_{m}(t)
\end{align}
where $s\lambda=(s\lambda_1, s\lambda_2,\cdots).$
\begin{exmp} As an example, we compute $p_{2}\diamond q_{m}(t)$ in terms of $q_i(t), i=0,1,\ldots,2m.$ Indeed, by definition of $L^{(s)}(z)$, we have
\begin{align*}
L^{(2)}(z^2)=&\mbox{exp}\left(\sum_{m\geq1}\frac{(1-t^{2m})}{m}p_{2m}z^{2m}\right)\\
=&\mbox{exp}\left(\sum_{m\geq1}\frac{(1-t^{m})(1+(-1)^{m})}{m}p_{m}z^{m}\right)\\
=&L^{(1)}(z)L^{(1)}(-z).
\end{align*}
Therefore, we have
\begin{align}\label{e:p2qm}
p_{2}\diamond q_{m}(t)=L^{(2)}_{m}=\sum_{i=0}^{2m}(-1)^iq_{i}(t)q_{2m-i}(t).
\end{align}
\end{exmp}

\begin{prop}\label{t:HL} For integer $k$ and $m$,
\begin{align}\label{e:H^{*}L}
L^{(s)\ast}_{k}H_{m}=H_{m}L^{(s)\ast}_{k}+(1-t^{s})\sum_{r=1}^{k}H_{m-sr}L^{(s)\ast}_{k-r}.
\end{align}
\begin{proof} Similar to the proof of Theorem \ref{t:Q}, using usual techniques of vertex operators we have
\begin{align*}
L^{(s)\ast}(z)H(w)&=H(w)L^{(s)\ast}(z)\exp\left(\sum_{m\geq1}\frac{1-t^{sm}}{m}(\frac{w^{s}}{z})^{m}\right)\\
&=H(w)L^{(s)\ast}(z)\frac{z-(tw)^{s}}{z-w^{s}}\\
&=H(w)L^{(s)\ast}(z)(1+(1-t^{s})\sum_{n=1}^{\infty}(\frac{w^{s}}{z})^{n})
\end{align*}
where the rational functions are expanded at $w=0$. The relation then follows by comparing coefficients of $z^{-k}w^{m}$.
\end{proof}
\end{prop}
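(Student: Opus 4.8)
The plan is to mirror the proof of Theorem~\ref{t:Q}: instead of manipulating the modes $L^{(s)\ast}_k$ and $H_m$ directly, I would establish the identity at the level of the full generating series $L^{(s)\ast}(z)$ and $H(w)$, and then read off the stated relation by extracting the coefficient of $z^{-k}w^m$. The only tool needed is the Baker--Campbell--Hausdorff-type identity $\exp(A)\exp(B)=\exp(B)\exp(A)\exp[A,B]$, valid whenever $[A,B]$ is central, exactly as quoted from \cite[Thm. 5.1]{H} in the proof of Theorem~\ref{t:Q}.

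First I would write $L^{(s)\ast}(z)=\exp(C)$ with $C=\sum_{m\geq1}s\tfrac{\partial}{\partial p_{sm}}z^{-m}$, and factor $H(w)=\exp(D^{+})\exp(D^{-})$ with $D^{+}=\sum_{n\geq1}\tfrac{1-t^{n}}{n}p_nw^{n}$ the creation part and $D^{-}=-\sum_{n\geq1}\tfrac{\partial}{\partial p_n}w^{-n}$ the annihilation part. Since $C$ and $D^{-}$ are built purely from differentiation operators they commute, so the sole nontrivial step is to move $\exp(C)$ past $\exp(D^{+})$. Using $[\tfrac{\partial}{\partial p_{sm}},p_n]=\delta_{sm,n}$ I would compute the central commutator
\[
[C,D^{+}]=\sum_{m\geq1}s\,\frac{1-t^{sm}}{sm}\Big(\frac{w^{s}}{z}\Big)^{m}=\sum_{m\geq1}\frac{1-t^{sm}}{m}\Big(\frac{w^{s}}{z}\Big)^{m},
\]
a scalar power series. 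Applying the exponential identity and then commuting $\exp(C)$ past $\exp(D^{-})$ (which is free) yields
\[
L^{(s)\ast}(z)H(w)=H(w)L^{(s)\ast}(z)\exp\!\Big(\sum_{m\geq1}\tfrac{1-t^{sm}}{m}(w^{s}/z)^{m}\Big).
\]

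Next I would put the scalar factor into closed form: splitting the sum and using $\sum_{m\geq1}x^{m}/m=-\ln(1-x)$ gives $\exp(\cdots)=\tfrac{1-t^{s}(w^{s}/z)}{1-(w^{s}/z)}=\tfrac{z-(tw)^{s}}{z-w^{s}}$, and expanding the geometric series at $w=0$ produces $1+(1-t^{s})\sum_{n\geq1}(w^{s}/z)^{n}$, recovering the three displayed equalities of the proposition. Finally I would extract the coefficient of $z^{-k}w^{m}$ on both sides: on the left this is $L^{(s)\ast}_{k}H_{m}$; on the right the ``$1$'' contributes $H_{m}L^{(s)\ast}_{k}$, while the term $(1-t^{s})(w^{s}/z)^{r}$ forces the $H$-index to be $m-sr$ and lowers the $L^{(s)\ast}$-index by $r$, giving $(1-t^{s})\sum_{r\geq1}H_{m-sr}L^{(s)\ast}_{k-r}$; since $L^{(s)\ast}_{k-r}$ vanishes for $k-r<0$, the sum truncates at $r=k$, producing exactly \eqref{e:H^{*}L}.

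I do not expect a genuine obstacle, as the computation is routine once the series identity is set up. The two points demanding care are bookkeeping: verifying that the differential parts of $L^{(s)\ast}(z)$ and $H(w)$ really commute, so that the single central commutator $[C,D^{+}]$ captures all the interaction, and fixing the expansion region $|z|>|w|^{s}$ so that $\tfrac{z-(tw)^{s}}{z-w^{s}}$ is expanded at $w=0$ rather than at $z=0$; this choice is precisely what makes the geometric series, and hence the coefficient $(1-t^{s})$ and the negative index shifts $m-sr$, come out as stated.
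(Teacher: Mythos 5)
Your proposal is correct and follows essentially the same route as the paper: the paper's proof likewise invokes the normal-ordering identity $\exp(A)\exp(B)=\exp(B)\exp(A)\exp[A,B]$ from Theorem \ref{t:Q}, arrives at the same scalar factor $\frac{z-(tw)^{s}}{z-w^{s}}$ expanded at $w=0$, and extracts the coefficient of $z^{-k}w^{m}$. You have merely made explicit the bookkeeping (the splitting $H(w)=\exp(D^{+})\exp(D^{-})$, the vanishing of $[C,D^{-}]$, and the truncation at $r=k$ from $L^{(s)\ast}_{j}=0$ for $j<0$) that the paper leaves as ``usual techniques of vertex operators.''
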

Using Proposition \ref{t:HL}, we immediately get the following result.
\begin{thm}\label{t:H-L} For a partition $\lambda=(\lambda_{1},\lambda_{2},\ldots ,\lambda_{l})\vdash n$, and integers $s, k$
\begin{align} \label{e:H-L}
L_{k}^{(s)\ast}H_{\lambda}.1=\sum_{\nu\vDash k}(1-t^{s})^{l(\nu)}H_{\lambda-s\nu}.1.
\end{align}
\end{thm}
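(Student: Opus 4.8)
The plan is to prove \eqref{e:H-L} by induction on the length $l=l(\lambda)$, using the commutation relation \eqref{e:H^{*}L} as the only input; the argument runs in exact parallel with the proof of Theorem \ref{t:Q}.

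First I would settle the base case $l=0$, where $H_{\emptyset}.1=1$. Since $L^{(s)\ast}(z)$ is built purely from the annihilation operators $s\,\partial/\partial p_{sm}$, one has $L^{(s)\ast}_{k}.1=\delta_{k,0}$. For $k=0$ the right-hand side of \eqref{e:H-L} contributes only the empty composition with coefficient $(1-t^{s})^{0}=1$, and for $k>0$ it is empty, so both sides agree. (Equivalently one may take $l=1$ as the base case, where \eqref{e:H^{*}L} applied to $H_{\lambda_1}.1$, together with $L^{(s)\ast}_{k-r}.1=\delta_{r,k}$, gives the result directly.)

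For the inductive step I would write $H_{\lambda}.1=H_{\lambda_1}\bigl(H_{\lambda_2}\cdots H_{\lambda_l}.1\bigr)$ and use \eqref{e:H^{*}L} to move $L^{(s)\ast}_{k}$ past $H_{\lambda_1}$:
\begin{align*}
L^{(s)\ast}_{k}H_{\lambda}.1&=H_{\lambda_1}L^{(s)\ast}_{k}\bigl(H_{\lambda_2}\cdots H_{\lambda_l}.1\bigr)\\
&\quad+(1-t^{s})\sum_{r=1}^{k}H_{\lambda_1-sr}L^{(s)\ast}_{k-r}\bigl(H_{\lambda_2}\cdots H_{\lambda_l}.1\bigr).
\end{align*}
The first summand corresponds to taking $\nu_1=0$ (carrying the trivial factor $(1-t^{s})^{0}$), while each term of the sum corresponds to $\nu_1=r\geq1$ (carrying one factor of $1-t^{s}$). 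Applying the induction hypothesis to $L^{(s)\ast}_{k-\nu_1}H_{(\lambda_2,\ldots,\lambda_l)}.1$ expands it over weak compositions $\nu'=(\nu_2,\ldots,\nu_l)$ of $k-\nu_1$ with coefficient $(1-t^{s})^{l(\nu')}$.

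The remaining step is the bookkeeping of the exponent of $1-t^{s}$, which is the only point requiring real care and hence the main (if modest) obstacle. Setting $\nu=(\nu_1,\nu_2,\ldots,\nu_l)$, a weak composition of $k$ of length $l$, the accumulated prefactor is $(1-t^{s})^{[\nu_1>0]}(1-t^{s})^{l(\nu')}$, and since $l(\nu)=[\nu_1>0]+l(\nu')$ records exactly the number of positive parts, this equals $(1-t^{s})^{l(\nu)}$. Summing over all such $\nu$ yields \eqref{e:H-L}. One must fix the convention that $\nu$ ranges over weak compositions of $k$ padded to length $l(\lambda)$ and that $l(\nu)$ counts only the positive parts, so that the merge of the ``$\nu_1=0$'' case with the inductive factor produces the correct total exponent; with this convention the recombination is immediate.
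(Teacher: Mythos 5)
Your proof is correct and follows exactly the paper's route: the paper proves Theorem \ref{t:H-L} by induction on $l(\lambda)$ using the commutation relation \eqref{e:H^{*}L}, which is precisely your argument, with the base case $L^{(s)\ast}_{k}.1=\delta_{k,0}$ and the exponent bookkeeping $l(\nu)=[\nu_1>0]+l(\nu')$ filled in correctly. No gaps; your write-up simply makes explicit the details the paper leaves to the reader.
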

\begin{proof} It is shown by induction on $l(\lambda)$ using \eqref{e:H^{*}L}.
\end{proof}

By Proposition \ref{p:straight}, we can rewrite \eqref{e:H-L} as
\begin{align}\label{e:eq5}
L_{k}^{(s)\ast}H_{\lambda}.1=\sum_{\mu\vdash n-sk}\sum_{\nu\vDash k}(1-t^{s})^{l(\nu)}B(\mu,\lambda-s\nu)H_{\mu}.1.
\end{align}

Now we give the plethystic Murnaghan-Nakayama rule for Hall-Littlewood functions.
\begin{thm} \label{t:p H-L} Let $s, k$ be two positive integers and $\mu$ be a partition of $n$, then we have
\begin{align}
(p_{s}\diamond q_{k}(t))H_{\mu}.1=\sum_{\mu\subset\lambda\vdash n+sk}\sum_{\nu\models k}\frac{b_{\mu}(t)}{b_{\lambda}(t)}(1-t^s)^{l(\nu)}B(\mu,\lambda-s\nu)H_{\lambda}.1.
\end{align}
\end{thm}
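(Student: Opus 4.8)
The plan is to read off the coefficient of each $H_\lambda.1$ on the left by combining the orthogonality \eqref{e:orth} with the adjointness of the two vertex operators. First I would note that $L^{(s)}(z)$ in \eqref{e:Lop} is built purely from the power sums $p_{sm}$, with no derivative part, so its graded piece $L^{(s)}_k$ is simply the operator of multiplication by $L^{(s)}_k.1=p_s\diamond q_k(t)$, the last equality being \eqref{e:equal}. Thus the left-hand side is $L^{(s)}_k H_\mu.1$, and expanding in the orthogonal basis $\{H_\lambda.1\}$ gives
\[
L^{(s)}_k H_\mu.1=\sum_{\lambda\vdash n+sk} c_\lambda\, H_\lambda.1,\qquad c_\lambda=\frac{1}{b_\lambda(t)}\,\langle L^{(s)}_k H_\mu.1,\,H_\lambda.1\rangle_t .
\]

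Next I would transport $L^{(s)}_k$ across the pairing. The decisive algebraic fact is that $L^{(s)*}_k$ is the adjoint of $L^{(s)}_k$ for $\langle\cdot,\cdot\rangle_t$: since $p_{sm}^{*}=\frac{sm}{1-t^{sm}}\frac{\partial}{\partial p_{sm}}$, the adjoint of each summand $\frac{1-t^{sm}}{m}p_{sm}$ in the exponent of $L^{(s)}(z)$ is $s\frac{\partial}{\partial p_{sm}}$, whence $L^{(s)}(z)^{*}=L^{(s)*}(z^{-1})$ and, comparing coefficients of $z^k$, $(L^{(s)}_k)^{*}=L^{(s)*}_k$. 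Granting this, $\langle L^{(s)}_k H_\mu.1, H_\lambda.1\rangle_t=\langle H_\mu.1, L^{(s)*}_k H_\lambda.1\rangle_t$, and I apply \eqref{e:eq5} to $H_\lambda.1$ (with $|\lambda|=n+sk$) to expand
\[
L^{(s)*}_k H_\lambda.1=\sum_{\rho\vdash n}\sum_{\nu\models k}(1-t^s)^{l(\nu)}B(\rho,\lambda-s\nu)\,H_\rho.1 .
\]
Pairing with $H_\mu.1$ and using \eqref{e:orth} a second time selects the term $\rho=\mu$ and yields
\[
c_\lambda=\frac{b_\mu(t)}{b_\lambda(t)}\sum_{\nu\models k}(1-t^s)^{l(\nu)}B(\mu,\lambda-s\nu),
\]
which is exactly the asserted coefficient.

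I expect the main obstacle to be the careful bookkeeping in the adjoint identity $(L^{(s)}_k)^{*}=L^{(s)*}_k$: one must reconcile the $z\mapsto z^{-1}$ convention relating $L^{(s)}(z)$ and $L^{(s)*}(z)$, and confirm that the $t$-weighting $(1-t^{sm})/m$ in the exponent cancels precisely against the weight in $p_{sm}^{*}$ coming from \eqref{e:pp}, so that the surviving operator is the derivative $s\,\partial/\partial p_{sm}$ appearing in $L^{(s)*}(z)$. A secondary point is the index range $\mu\subset\lambda$ in the displayed sum: this is inherited from the support of $B(\mu,\lambda-s\nu)$ together with the constraint $\nu\models k$, and I would record that the contributions vanish outside $\{\lambda:\mu\subset\lambda\}$ rather than derive a sharper support.
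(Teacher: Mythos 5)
Your proposal is correct and follows essentially the same route as the paper: the authors likewise extract $c_\lambda(t)=\frac{1}{b_\lambda(t)}\langle(p_s\diamond q_k(t))H_\mu.1,H_\lambda.1\rangle_t$ via \eqref{e:orth}, move the multiplication operator across the pairing to $L^{(s)*}_k$ using \eqref{e:equal}, and then apply \eqref{e:eq5} together with orthogonality to select $\rho=\mu$. The only difference is cosmetic: you spell out the adjointness verification $(L^{(s)}_k)^{*}=L^{(s)*}_k$ via $p_{sm}^{*}=\frac{sm}{1-t^{sm}}\frac{\partial}{\partial p_{sm}}$, which the paper leaves implicit in calling $L^{(s)*}(z)$ the adjoint operator.
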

\begin{proof}
Suppose $c_{\lambda}(t)$ is the coefficient of $H_{\lambda}.1$ in $(p_{s}\diamond q_{k}(t))H_{\mu}.1$, then
\begin{align*}
c_{\lambda}(t)=&\frac{1}{b_{\lambda}(t)}\langle (p_{s}\diamond q_{k}(t))H_{\mu}.1, H_{\lambda}.1 \rangle\quad\quad (\text {by \eqref{e:orth}})\\
=&\frac{1}{b_{\lambda}(t)}\langle H_{\mu}.1, L^{(s)*}_{k}H_{\lambda}.1 \rangle\quad\quad\quad (\text { by  \eqref{e:equal})} \\
=&\frac{b_{\mu}(t)}{b_{\lambda}(t)}\sum_{\nu\models k}(1-t^s)^{l(\nu)}B(\mu,\lambda-s\nu)\quad (\text {by \eqref{e:orth} and \eqref{e:eq5}}).
\end{align*}
\end{proof}

\begin{cor} \label{t:cor1}For any positive integers $s, k$, we have that
\begin{align}
p_{s}\diamond q_{k}(t)=\sum_{\lambda\vdash sk}\sum_{\nu\models k}\frac{1}{b_{\lambda}(t)}(1-t^s)^{l(\nu)}B(\emptyset,\lambda-s\nu)H_{\lambda}.1.
\end{align}
\end{cor}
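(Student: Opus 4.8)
The plan is to obtain this corollary directly as the specialization $\mu=\emptyset$ of Theorem \ref{t:p H-L}, so no new machinery is needed. First I would set $\mu=\emptyset$ and $n=|\mu|=0$ in the statement of Theorem \ref{t:p H-L}. Under this choice the left-hand side $(p_s\diamond q_k(t))H_{\emptyset}.1$ collapses to $p_s\diamond q_k(t)$, since $H_{\emptyset}.1=H_0.1=1$ is the vacuum vector and multiplication by it is the identity on $\Lambda_{\mathbb{Q}(t)}$.

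Next I would evaluate the two normalizing scalars attached to $\mu$. Because $l(\emptyset)=0$ and $\emptyset$ has no parts, the formula $b_{\lambda}(t)=(1-t)^{l(\lambda)}\prod_{i\geq1}[m_i(\lambda)]!$ gives $b_{\emptyset}(t)=(1-t)^{0}\cdot 1=1$, so the prefactor $b_{\mu}(t)/b_{\lambda}(t)$ reduces to $1/b_{\lambda}(t)$. Moreover, the containment $\emptyset\subset\lambda$ holds for every partition $\lambda$, and the weight condition $\lambda\vdash n+sk$ becomes simply $\lambda\vdash sk$; hence the outer sum runs over all $\lambda\vdash sk$ with no further restriction, and $B(\mu,\lambda-s\nu)$ becomes $B(\emptyset,\lambda-s\nu)$.

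Assembling these observations, the right-hand side of Theorem \ref{t:p H-L} turns into $\sum_{\lambda\vdash sk}\sum_{\nu\models k}\frac{1}{b_{\lambda}(t)}(1-t^s)^{l(\nu)}B(\emptyset,\lambda-s\nu)H_{\lambda}.1$, which is precisely the claimed identity. There is essentially no obstacle in this argument; the only points requiring a moment's care are confirming the boundary conventions $H_{\emptyset}.1=1$ and $b_{\emptyset}(t)=1$, both of which are immediate from the definitions recorded at the start of Section 4, so the corollary follows at once.
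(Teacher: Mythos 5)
Your proposal is correct and is exactly the route the paper intends: Corollary \ref{t:cor1} is stated as an immediate specialization of Theorem \ref{t:p H-L} at $\mu=\emptyset$, and your verifications that $H_{\emptyset}.1=1$, $b_{\emptyset}(t)=1$, and that the sum becomes unrestricted over $\lambda\vdash sk$ are precisely the (implicit) details needed.
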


\begin{exmp}\label{t:p2qk(t)} Set $s=2$, $\mu=\emptyset$. Due to \eqref{e:p2qm}, $H_{\lambda}.1$ with $l(\lambda)\geq 3$ do not appear in the expansion of $p_2\diamond q_{k}(t).$ So it is sufficient to compute $B(\emptyset, \lambda-2\nu)$ when $l(\lambda)\leq 2.$ In this case, $\lambda=(2k-i, i) (i=0,1,\ldots,k).$
Using case by case consideration, we have
\begin{align*}
c_{\lambda}(t)=\sum_{\nu\models k}\frac{1}{b_{(2k-i,i)}(t)}(1-t^2)^{l(\nu)}B(\emptyset,(2k-i,i)-2\nu)=
\begin{cases}
(-1)^k& \text {if $i=k$}\\
(-1)^i(t+1)& \text {if $0\leq i<k.$}
\end{cases}
\end{align*}
Therefore,
\begin{align}\label{e:p2qk}
p_{2}\diamond q_{k}(t)=(-1)^{k}H_{(k,k)}.1+(t+1)\sum_{i=0}^{k-1}(-1)^{i}H_{(2k-i,i)}.1.
\end{align}
\end{exmp}

This is a generalization of Littlewood's result \cite[p.122]{L2} for Schur functions:
\begin{align*}
p_2\circ h_k=\sum_{j=0}^{k}(-1)^js_{(2k-j,j)}
\end{align*}
where $h_k$ (resp. $s_{(2k-j,j)}$) is the complete (resp. Schur) symmetric functions.

\section{Appendix}
{\em Proof of \eqref{e:decomposition2}:} Suppose $B=(b_{ij})_{2n\times2n}$ is an antisymmetric matrix obtained from $A=(a_{ij})_{2n\times2n}$ by rotating the first $i$ rows and columns by the cyclic permutation $\left(\begin{array}{ccccc}
1&2&\cdots&i-1&i\\
2&3&\cdots&i&1\\
\end{array}\right).$ Explicitly
$$
b_{1j+1}=
\begin{cases}
a_{ij}& \text{if $j=1,2,\cdots,i-1$;}\\
a_{i,j+1}& \text{if $j=i,i+1,\cdots,2n$.}
\end{cases}
\quad
B_{1j+1}=
\begin{cases}
A_{ij}& \text{if $j=1,2,\cdots,i-1$;}\\
A_{i,j+1}& \text{if $j=i,i+1,\cdots,2n$.}
\end{cases}
$$
Therefore $\text{Pf(A)}=(-1)^{i-1}\text{Pf(B)}.$ It follows from \eqref{e:expansion} that
\begin{align*}
\text{Pf(B)}&=\sum_{j=2}^{2n}(-1)^{j}b_{1j}\text{Pf($B_{1j}$)}\\
&=\sum_{j=1}^{i-1}(-1)^{j+1}b_{1j+1}\text{Pf($B_{1j+1}$)}+\sum_{j=i+1}^{2n}(-1)^{j}b_{1j}\text{Pf($B_{1j}$)}\\
&=\sum_{j=1}^{i-1}(-1)^{j+1}a_{ij}\text{Pf($A_{ij}$)}+\sum_{j=i+1}^{2n}(-1)^{j}a_{ij}\text{Pf($A_{ij}$)}\\
&=\sum_{j=1}^{i-1}(-1)^{j}a_{ji}\text{Pf($A_{ij}$)}+\sum_{j=i+1}^{2n}(-1)^{j}a_{ij}\text{Pf($A_{ij}$)}\\
&=\sum\limits_{j\neq i}(-1)^{j}:a_{ij}:\text{Pf($A_{ij}$)}.
\end{align*}
\hfill $\Box$

\bigskip

\bigskip
\noindent{\bf Acknowledgments}

We would like to thank the anonymous referee for helpful comments and correcting an error in the original statement of Prop. \ref{p:straight}.
The project is partially supported by Simons Foundation under
grant no. 523868, NSFC grant 12171303, and the Humboldt foundation.
The second author also thanks Max-Planck Institute for Mathematics in the Sciences, Leipzig
for hospitality during the project. The third author is supported by China Scholarship Council and he also acknowledges the hospitality of the Faculty of Mathematics of University of Vienna, where parts of the present work was completed.

\bigskip

\noindent{\bf Statement of Conflict Interest.} On behalf of all authors, the corresponding author states that there is no conflict of interest.

\bigskip

\noindent{\bf Data Availability Statement.} All data generated during the study are included in the article.

\bigskip

\bibliographystyle{plain}

\end{document}